\documentclass[10pt,a4paper,leqno]{amsart}

%%%%%%%%%%%%%%%% PACKAGES %%%%%%%%%%%%%%%%
\usepackage[utf8]{inputenc}
\usepackage[T1]{fontenc}
\usepackage[english]{babel}
\usepackage{amsthm}
\usepackage{amsfonts}         
\usepackage{amsmath}
\usepackage{amssymb}
\usepackage{cases}
\usepackage{hyphenat}
\usepackage[psdextra, hidelinks]{hyperref}

\usepackage{mathrsfs}
\usepackage{bm}

\usepackage{xcolor}%
%%%%%%%%%%%%%%%% COMMANDS %%%%%%%%%%%%%%%%

\def\XXint#1#2#3{{\setbox0=\hbox{$#1{#2#3}{\int}$ }
\vcenter{\hbox{$#2#3$ }}\kern-.6\wd0}}

\newcommand{\overbar}[1]{\mkern 1.5mu\overline{\mkern-1.5mu#1\mkern-1.5mu}\mkern 1.5mu}
\newcommand{\seq}[1]{{ \bar{#1} }} % Can be easily changed

\newcommand{\sseqi}[3]{{#1}_{\seq{#2}^{#3}}}
\newcommand{\sseq}[2]{{\sseqi{#1}{#2}{}}}

\newcommand{\Bseq}[1]{{ \overbar{#1} }}
\newcommand{\seqm}{\Bseq{m}} %% Bubblegum fix for em
\newcommand{\colM}{\{1, 2, \dots , m\}}

\newcommand{\wt}[1]{{\widetilde{#1}}}

\newcommand{\inv}[1]{\frac{1}{#1}}

\newcommand{\pinv}[1]{\inv{#1}} % Power invert, for further use can be modified to be like 1/#1

\newcommand{\avgfu}[1]{\frac{1_{#1}}{|#1|}}
\newcommand{\avgfr}[1]{\frac{1_{#1}}{|#1|^\pinv{2}}}

\newcommand{\sulk}[1]{\left(#1\right)}

\newcommand{\sulku}[1]{( #1 )}

\newcommand{\Bsulk}[1]{\Big(#1\Big)}
\newcommand{\biggsulk}[1]{\bigg(#1\bigg)}

\newcommand{\ksulk}[1]{\left\lbrace #1 \right\rbrace}

\newcommand{\ksulku}[1]{\lbrace #1 \rbrace}

\newcommand{\pair}[1]{\langle #1 \rangle}
\newcommand{\bpair}[1]{\big\langle #1 \big\rangle}
\newcommand{\Bpair}[1]{\Big\langle #1 \Big\rangle}

\newcommand{\abs}[1]{\left| #1 \right|}
\newcommand{\sabs}[1]{| #1 |}
\newcommand{\babs}[1]{\big| #1 \big|}
\newcommand{\Babs}[1]{\Big| #1 \Big|}

\newcommand{\bbrac}[1]{\big[ #1\big]}
\newcommand{\Bbrac}[1]{\Big[ #1 \Big]}
\newcommand{\bigbrac}[1]{\bigg[ #1\bigg]}

\newcommand{\norm}[2]{{\| #1 \|}_{#2}}
\newcommand{\Bnorm}[2]{\Big\| #1 \Big\|_{#2}}

\newcommand{\normBMO}[3]{\norm{#1}{\BMOW{#2}{#3}}}

\newcommand{\normbmo}[3]{\norm{#1}{\bmoW{#2}{#3}}}

%Square sum

%Square sum for multiparameter
\newcommand{\sqsumm}[2]{\Big(\sum_{#2} {#1}^2 \otimes \frac{1_{#2}}{\left| #2\right|} \Big)^\pinv{2}}
%%%%%%%%%%%%%%%% OPERATORS AND SYMBOLS %%%%%%%%%%%%%%%%
%%% OLD
%\newcommand{\BMOW}[2]{\BMO_{\text{prod}}^{#1} {#2}}

%%% NEW
\newcommand{\BMOW}[2]{\BMO^{#1} ({#2})}
\newcommand{\bmoW}[2]{\bmo^{#1}({#2})}

\newcommand{\dd}{\mathrm d}

\newcommand{\bD}{\mathcal{D}}

\newcommand{\bS}{\mathcal{S}}

\newcommand{\bI}{\mathcal{I}}
\newcommand{\bJ}{\mathcal{J}}

\newcommand{\BMO}{\operatorname{BMO}}
\newcommand{\bmo}{\operatorname{bmo}}
\newcommand{\spt}{\operatorname{spt}}

\newcommand{\ch}{\operatorname{ch}}

\newcommand{\esssup}[1]{\mathop{\mathrm{ess\,sup}}_{#1}}

\newcommand{\R}{\mathbb{R}}
\newcommand{\C}{\mathbb{C}}
\newcommand{\Q}{\mathbb{Q}}

\newcommand{\E}{\mathbb{E}}

%%%%%%%%%%%%%%%% DEFINITIONS OF THEOREMS %%%%%%%%%%%%%%%%
\swapnumbers
\theoremstyle{plain}
\newtheorem{thm}[equation]{Theorem}
\newtheorem{lem}[equation]{Lemma}
\newtheorem{prop}[equation]{Proposition}
\newtheorem{cor}[equation]{Corollary}

\theoremstyle{definition}

\theoremstyle{remark}
\newtheorem{rem}[equation]{Remark}

\numberwithin{equation}{section}

%%%%%%%%%%%%%%%% PAGE SETTINGS %%%%%%%%%%%%%%%%

%\pagestyle{plain}
\setcounter{page}{1}
\addtolength{\hoffset}{-1.15cm}
\addtolength{\textwidth}{2.3cm}
\addtolength{\voffset}{0.45cm}
\addtolength{\textheight}{-0.9cm}
\setcounter{tocdepth}{1}
%%%%%%%%%%%%%%%% TITLE %%%%%%%%%%%%%%%%

\author{Emil Airta}
\address[E.A.]{Department of Mathematics and Statistics, University of Helsinki, P.O.B. 68, FI-00014 University of Helsinki, Finland}
\email{emil.airta@helsinki.fi}

\title[Two-weight commutator estimates]{Two-weight commutator estimates: general multi-parameter framework}

\makeatletter
\@namedef{subjclassname@2010}{%
  \textup{2010} Mathematics Subject Classification}
\makeatother

\subjclass[2010]{42B20}
\keywords{Iterated commutators, multi-parameter singular integrals, Bloom's inequality, product BMO, weighted BMO, little BMO}

\thispagestyle{empty}
\begin{document}

\begin{abstract}
We provide an explicit technical framework for proving very general two-weight commutator estimates in arbitrary parameters. The aim is to both
clarify existing literature, which often explicitly focuses on two parameters only, and to extend very recent results to the full generality of
arbitrary parameters.
More specifically, we study two-weight commutator estimates -- Bloom type estimates -- in the multi-parameter setting
involving weighted product BMO and little BMO spaces, and their combinations. 
\end{abstract}

\allowdisplaybreaks %%% Allows align to be breaked along with the page
\maketitle
\section{Introduction}
Singular integral operators (SIOs) $T$ have the general form
$$
  Tf(x)=\int_{\R^d}K(x,y)f(y)\dd y.
$$
Varying the assumptions on the underlying kernel $K$ gives us many fundamental linear transformations arising naturally in pure and applied analysis.
One-parameter kernels are singular when $x=y$, while the {\em multi-parameter} theory deals with kernels with singularities on all hyperplanes of the form $x_i=y_i$, where $x,y\in\R^d$ are written in the form $x=(x_i)_{i=1}^m\in\R^{d_1}\times\cdots\times\R^{d_m}$ for a given partition $d=d_1+\ldots+d_m$. Compare, for example, the one-parameter Cauchy kernel $1/(x-y)^2$ to the bi-parameter kernel
$$\frac{1}{(x_1-y_1)(x_2-y_2)},$$ which is the product of Hilbert kernels in both coordinate directions of $\R \times \R = \R^2 = \C$. 
General multi-parameter kernels do not need to be of the product or convolution form, however. Fefferman--Stein \cite{Fefferman1982} deals with the convolution case, while Journ\'e \cite{Journe1985} develops more general theory.
However, we will be relying on the much more recent dyadic-probabilistic methodology -- see Martikainen \cite{Martikainen2012} for the original bi-parameter theory and Ou \cite{Ou2017} for the multi-parameter extensions.

Commutator estimates are a key part of modern harmonic analysis. Coifman--Rochberg--Weiss \cite{Coifman1976}
showed that
\begin{equation}\label{eq:commutatorbb}
\|b\|_{\BMO} \lesssim \|[b,T]\|_{L^p \to L^p} \lesssim \|b\|_{\BMO}, \textup{ where } [b,T]f := bTf - T(bf),
\end{equation}
for $p \in (1,\infty)$ and for some non-degenerate enough one-parameter SIOs $T$. In general, commutator estimates e.g. yield by duality
factorizations for Hardy functions, imply various div-curl lemmas relevant for compensated compactness, and have
connections to recent developments of the Jacobian problem $Ju = f$ in $L^p$ -- for the latter see Hyt\"onen \cite{Hytonen2018a}.
The field of multi-parameter commutator estimates has recently also been very active.
For evidence of the activity, see, for example, the paper
Duong--Li--Ou--Pipher--Wick \cite{Duong2019}, which studies the commutators of multi-parameter flag singular integrals. We get
to other recent multi-parameter commutator estimates momentarily.

Let $\mu$ and $\lambda$ be two general Radon measures in $\R^d$.
A two-weight problem asks for a characterisation of the boundedness $T \colon L^p(\mu) \to L^p(\lambda)$, where
$T$ can e.g. be an SIO.
For the two-weight characterisation for the Hilbert transform $T =H$, where $K(x,y) = 1/(x-y)$, see
Lacey \cite{Lacey2014} and Lacey, Sawyer, Uriarte-Tuero and Shen \cite{Lacey2014a} (see also Hyt\"onen \cite{Hytonen2018}).
The general higher dimensional theory has serious challenges, and there is no characterisation yet in the Riesz transform case.
However, recently the corresponding two-weight question in the commutator setting has seen a lot of attention and progress.
In these so-called Bloom type variants of the two-weight question we require that $\mu$ and $\lambda$ are Muckenhoupt $A_p$ weights and that the problem
involves a function $b$. The theory then concerns the triple $(\mu, \lambda, b)$, and the function $b$ will lie
in some appropriate weighted $\BMO$ space $\BMO(\nu)$ formed using the Bloom weight $\nu := \mu^{1/p}\lambda^{-1/p} \in A_2$.
Therefore, this means that for an operator $A^b$, depending naturally on some function $b$,
the Bloom type questions concern the estimate
$$
\|A^b\|_{L^p(\mu) \to L^p(\lambda)} \lesssim_{[\mu]_{A_p}, [\lambda]_{A_p}} \|b\|_{\BMO(\nu)}.
$$
In the natural commutator setting the corresponding lower bound
$$
\|b\|_{\BMO(\nu)} \lesssim_{[\mu]_{A_p}, [\lambda]_{A_p}} \|[b,T]\|_{L^p(\mu) \to L^p(\lambda)}
$$
is also of interest.
For the Hilbert transform $T = H$ Bloom \cite{Bloom1985} proved such a two-sided estimate -- hence the name of the theory.

In the much more recent works of
Holmes--Lacey--Wick \cite{Holmes2016, Holmes2017} Bloom's upper bound was proved for general bounded SIOs in all dimensions $\R^d$.
The lower bound was proved in the Riesz case.
Lerner--Ombrosi--Rivera-R\'ios \cite{Lerner2017} refined these results -- this time the proofs employed
sparse domination methods. An iterated commutator of the form $[b, [b,T]]$ is studied by Holmes--Wick \cite{Holmes2018}, when $b \in \BMO \cap \BMO(\nu)$. This iterated case also follows from the so-called Cauchy integral trick of Coifman--Rochberg--Weiss \cite{Coifman1976}, see
Hyt\"onen \cite{Hytonen2016}. This trick only works, though, as it is assumed that $b \in \BMO$. 
However, this assumption is not valid in the optimal case -- a fundamentally improved iterated case is by Lerner--Ombrosi--Rivera-R\'ios \cite{Lerner2019}, where $b \in \BMO(\nu^{1/2}) \supsetneq \BMO \cap \BMO(\nu)$. This is optimal: a lower bound is also proved in \cite{Lerner2019}. In the already mentioned paper \cite{Hytonen2018a} by Hyt\"onen lower bounds with very weak non-degeneracy assumptions were shown. 
Multilinear Bloom type inequalities are studied in the paper Kunwar--Ou \cite{Kunwar2018}.

We now get into bi-parameter and multi-parameter theory. Here the recent progress is most often based on the so-called representation theorems as sparse domination methods essentially currently work in one-parameter only (although see Barron--Pipher \cite{Barron2017}). In fact, Barron--Conde-Alonso--Ou--Rey \cite{Barron2019} show that one of the simplest bi-parameter model operators -- the dyadic bi-parameter maximal function -- cannot satisfy the most natural or useful candidate for bi-parameter sparse domination.
A representation theorem represents SIOs by some dyadic model operators (DMOs).  
To understand the upcoming discussion, we need to discuss some details regarding this.
The proofs of representation theorems are based on very careful refinements
of various $T1$ theorems (for the original one see David and Journ\'e \cite{DJ}) and dyadic--probabilistic methods (see Nazarov--Treil--Volberg \cite{Nazarov2003}).
Indeed, $T1$ theorems essentially exhibit
a decomposition of a standard SIO into its cancellative part and the so-called paraproducts. 
The one-parameter dyadic representation theorem of Hyt\"onen \cite{Hytonen2012} (extending e.g. Petermichl \cite{Petermichl2000}) then provides a further decomposition of the cancellative part into so-called {\em dyadic shifts}, which are generalisations of the Haar multipliers
\begin{equation*}\label{eq:HaarMult}
  f=\sum_{Q\in\mathcal D}\pair{f,h_Q}h_Q\mapsto \sum_{Q\in\mathcal D}\lambda_Q\pair{f,h_Q}h_Q, \qquad |\lambda_Q| \lesssim 1.
\end{equation*}
On the other hand, a paraproduct refers to an expression obtained by expanding both factors of the usual pointwise product $b\cdot f$ in some resolution of the identity, and dropping some of the terms in the resulting double expansion (so that it is not the full product). The $T1 \in \BMO$ assumptions in $T1$ theorems specifically deal with these paraproducts. The $T1$ theorems follow from representation theorems, but the real point is that the structural information
of representation theorems is key for proofs of many other results.

In bi-parameter we have paraproducts and cancellative shifts, but also their hybrid combinations. The latter are new in this setting, and are called partial paraproducts due to their hybrid nature. The pure bi-parameter
paraproducts are called full paraproducts. This leads to the following terminology: free of paraproducts (all paraproducts vanish) and free
of full paraproducts (the partial paraproducts need not vanish but the full paraproducts do). These can all be phrased with checkable $T1 = 0$ type conditions. Such conditions always hold
in the convolution case, and in some works these types of assumptions are made if the technology to handle the various paraproducts is not yet in place.
See Martikainen \cite{Martikainen2012} for the bi-parameter representation and Ou \cite{Ou2017} for the multi-parameter extension. The following terminology is also convenient: the term
SIO refers just to the kernel structure of our operators, while a Calder\'on--Zygmund operator (CZO) is an SIO satisfying appropriate $T1$ type conditions (and is thus bounded)

We are now ready to start our discussion of bi-parameter and multi-parameter commutators.
If $T$ is a bi-parameter CZO in $\R^{d_1 + d_2}$ the right thing for $[b,T]$ is that $b \in \bmo(\R^{d_1 + d_2})$ -- this means that $b(\cdot, x_2)$ and $b(x_1, \cdot)$ are uniformly in the usual BMO (this is one of the many equivalent ways to state this).  This so-called little $\BMO$ is a certain type of 
bounded mean oscillation space in bi-parameter, and it arises in commutators of this type, but in many other cases the so-called product $\BMO$ (denoted e.g. by $\BMO_{\textup{prod}}$) of Chang and Fefferman \cite{Chang1980, CF1985} involving general open sets is more fundamental.
If $T_{d_1}$ and $T_{d_2}$ are linear one-parameter CZOs in $\R^{d_1}$ and $\R^{d_2}$, respectively, then for
$$
[T_{d_1}^1, [b, T_{d_2}^2]]f = T_{d_1}^1(b T_{d_2}^2 f) - T_{d_1}^1T_{d_2}^2(bf) - b T_{d_2}^2 T_{d_1}^1 f + T_{d_2}^2(bT_{d_1}^1f),
$$
where $T_{d_1}^1 f(x) = T_{d_1}(f(\cdot, x_2))(x_1)$, the right object is $b \in \BMO_{\textup{prod}}(\R^{d_1+ d_2})$. 
In the Hilbert transform $T = H$ case references for these commutators include Ferguson--Sadosky \cite{Ferguson2000} and Ferguson--Lacey \cite{Ferguson2002}.
We note that Ferguson--Lacey \cite{Ferguson2002} contains the deep lower bound 
$$
\|b\|_{\BMO_{\textup{prod}}(\R^{2})} \lesssim \| [H^1, [b, H^2]]\|_{L^2 \to L^2}.
$$
See also Lacey--Petermichl--Pipher--Wick \cite{Lacey2009, Lacey2010, Lacey2012} for the higher dimensional Riesz setting and div-curl lemmas.

By bounding commutators of bi-parameter shifts Ou, Petermichl and Strouse \cite{Ou2016} proved that
 \begin{equation}\label{eq:OPS1}
 \|[b,T]\|_{L^2 \to L^2} \lesssim \|b\|_{\bmo},
 \end{equation}
when $T$ is a general bi-parameter CZO as in \cite{Martikainen2012} and is free of paraproducts. This is a very special case of their theorem -- we get to the full case later.
Holmes--Petermichl--Wick \cite{Holmes2018a} removed the paraproduct free assumption of \cite{Ou2016} and proved the first bi-parameter Bloom type estimate
$$
\|[b,T]\|_{L^p(\mu) \to L^p(\lambda)} \lesssim_{[\mu]_{A_p}, [\lambda]_{A_p}} \|b\|_{\bmo(\nu)}.
$$
Here $A_p$ stands for bi-parameter weights (replace cubes by rectangles in the usual definition) and $\bmo(\nu)$ is the weighted little $\BMO$ space defined using the norm
$$
\|b\|_{\bmo(\nu)} := \sup_{R} \frac{1}{\nu(R)} \int_R |b - \langle b \rangle_R|,
$$
where the supremum is over all rectangles $R = I_1 \times I_2 \subset \R^{d_1} \times \R^{d_2}$, $\langle b \rangle_R = \frac{1}{|R|} \int_R b$ and $\nu(R) = \int_R \nu$.

Recently, Li--Martikainen--Vuorinen \cite{Li2018Bloom} reproved the result of \cite{Holmes2018a} using a short proof based on some improved bi-parameter commutator decompositions from
their bilinear bi-parameter theory \cite{Li2018prodArxiv}. Importantly, the new proof also allowed them to handle the iterated little BMO commutator by showing that
$$
\| [b_k,\cdots[b_2, [b_1, T]]\cdots]\|_{L^p(\mu) \to L^p(\lambda)} \lesssim_{[\mu]_{A_p}, [\lambda]_{A_p}} \prod_{i=1}^k\|b_i\|_{\bmo(\nu^{1/k})}.
$$
They also recently showed the corresponding lower bound in \cite{Li2018prod} using the median method.
In \cite{Dalenc2016} Dalenc and Ou extended \cite{Fefferman1982} by proving that for all one-parameter CZOs
$$
\| [T_{d_1}^1, [b, T_{d_2}^2]] \|_{L^p \to L^p} \lesssim \|b\|_{\BMO_{\textup{prod}}}. 
$$
The two-weight version of this was recently proved in \cite{Li2018prod}:
\begin{equation}\label{eq:BloomProdBMO}
\| [T_{d_1}^1, [b, T_{d_2}^2]] \|_{L^p(\mu) \to L^p(\lambda)} \lesssim_{[\mu]_{A_p}, [\lambda]_{A_p}} \|b\|_{\BMO_{\textup{prod}}(\nu)}.
\end{equation}
This is the first two-weight Bloom estimate involving the most delicate (and important) bi-parameter $\BMO$ space -- the product $\BMO$.
In the weighted setting it can be defined by using the norm
\begin{equation}\label{eq:CFprodBMO}
\|b\|_{\BMO_{\textup{prod}}(\nu, \bD)} := 
\sup_{\Omega} \Big( \frac{1}{\nu(\Omega)} \mathop{\sum_{R \in \bD}}_{R \subset \Omega} |\langle b, h_R\rangle|^2 \langle \nu \rangle_{R}^{-1} \Big)^{1/2},
\end{equation}
where $\Omega$ is an open set, $\bD$ is a given cartesian product of some dyadic grids in $\R^{d_1}$ and $\R^{d_2}$, respectively, and the non-dyadic variant is a supremum over all such norms.

Our goal in this paper is to provide a careful proof of the analog of the estimate \eqref{eq:BloomProdBMO} in the case that the appearing singular integrals are multi-parameter, and in the case that we allow more singular integrals in the iteration. We want to provide an explicit proof in the multi-parameter setting, as they are very rare in the literature -- often bi-parameter results are proved and the corresponding multi-parameter results
are implicitly or explicitly claimed. This practice makes those result available only for a very few experts as often the details of the multi-paramter extensions
are actually very challenging -- both technically and notationally. The underlying general philosophies can be hard to understand from just the bi-parameter results.
So our focus is both on the explicit methodology unveiling the general principles, and also on extending the recent result \eqref{eq:BloomProdBMO} as much as we possibly can.

In \cite{Ou2016} the estimate \eqref{eq:OPS1} is used implicitly as a base case for more complicated multi-parameter commutator estimates.
For example, suppose that $T_1$ and $T_2$ are paraproduct free linear bi-parameter singular integrals satisfying the assumptions of the representation theorem \cite{Martikainen2012} in $\R^{d_1} \times \R^{d_2}$ and $\R^{d_3} \times \R^{d_4}$ respectively.
Then according to \cite{Ou2016} we have the estimate
\begin{align*}
\|[T_1, [b, T_2]]f&\|_{L^2(\prod_{i=1}^4 \R^{d_i})} \lesssim \max\big( \sup_{x_2, x_4} \|b(\cdot, x_2, \cdot, x_4)\|_{\BMO_{\operatorname{prod}}}, \\
 &\sup_{x_2, x_3} \|b(\cdot, x_2, x_3, \cdot)\|_{\BMO_{\operatorname{prod}}}, 
  \sup_{x_1, x_4} \|b(x_1, \cdot, \cdot, x_4)\|_{\BMO_{\operatorname{prod}}}, \\
  &\sup_{x_1, x_3} \|b(x_1, \cdot, x_3, \cdot)\|_{\BMO_{\operatorname{prod}}} \big) \|f\|_{L^2(\prod_{i=1}^4 \R^{d_i})}
\end{align*}
involving both the product $\BMO$ and little $\BMO$ philosophies.
The paraproduct free assumption can be removed according to \cite{Holmes2018a}. We prove results of this type in the two-weight Bloom case generalising \cite{Li2018prod} and \eqref{eq:BloomProdBMO}.
As a byproduct, we get explicit proof of unweighted multi-parameter estimates of \cite{Ou2016}.
The full methodology is included, which is key. 

\subsection*{Statement of the main results}
A small restriction in our theorems concerns the fact that the way we handle the hybrid paraproducts (partial paraproducts) requires sparse domination methods in one-parameter. This requires that
when our CZOs are not paraproduct free, they are at most bi-parameter -- otherwise the partial paraproducts would not be amenable to
sparse domination methods. This restriction comes from the methods of \cite{Li2018Bloom, Li2018prod}, which we adapt here. We have not found a way to estimate certain terms
without relying on these methods. However, if our CZOs are paraproduct free, they can be CZOs of arbitrarily many parameters. This recovers multipliers, convolution form CZOs and others.

\begin{thm}\label{thm:main}
We work in $\R^d$ with $m$-parameters, i.e., $d = d_1 + \ldots + d_m$. 
For a given $k \le m$ we want to have $k$ multi-parameter CZOs so that their parameters add up exactly to $m$.
Thus, let $\bI  = \ksulku{\bI_i}_{i=1}^k$ be a partition of $\{1, \ldots, m\}$, and for each $i = 1, \ldots, k$ let us be given
an $\#\bI_i$-parameter CZO $T_i$ in $\prod_{j \in \bI_i} \R^{d_j}$. Suppose, in addition, that for all $i = 1, \ldots, k$ at least one of the following conditions holds:
\begin{enumerate}
\item the CZO $T_i$ is paraproduct free, or
\item $\#\bI_i \le 2$.
\end{enumerate}

Let $\nu = \mu^\pinv{p} \lambda^{-\pinv{p}},$ where $\mu, \lambda \in A_p(\R^{d_1} \times \cdots \times \R^{d_m})$ ($m$-parameter $A_p$ weights in $\R^d$) and $1<p<\infty.$
Then for $b \colon \R^d \to \C$ we have
$$
\norm{[T_{1},[T_2,\dots [b,T_k]]]}{L^p(\mu) \rightarrow L^{p}(\lambda)}
\lesssim \normbmo{b}{\bI}{\nu},
$$
where we understand that in this formula $T_i$ acts on the whole space $\R^d$ -- i.e, $T_i  = T_i^{\bI_i}$ (see Section \ref{sec:basicnot} for this notation). 
Moreover, $\normbmo{b}{\bI}{\nu}$ is the suitable little product BMO -- see \eqref{eq:littleproductBMO}.
\end{thm}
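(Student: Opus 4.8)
The plan is to follow the dyadic-probabilistic route that underlies the bi-parameter arguments of \cite{Li2018Bloom, Li2018prod, Ou2016} and to push it through carefully in the $m$-parameter setting. First I would invoke the multi-parameter representation theorem of Ou \cite{Ou2017} to reduce each CZO $T_i$ to an average over dyadic grids of $\#\bI_i$-parameter dyadic model operators: cancellative shifts, full paraproducts, and (when $\#\bI_i \le 2$) the hybrid partial paraproducts. Because the commutator $[T_1,[T_2,\dots[b,T_k]]]$ is multilinear in the operators $T_i$, expanding each $T_i$ in its DMO decomposition and using the triangle inequality reduces Theorem~\ref{thm:main} to a uniform bound, with constant controlled by $\normbmo{b}{\bI}{\nu}$, for every iterated commutator $[S_1,[S_2,\dots[b,S_k]]]$ where each $S_i$ is a single DMO of the appropriate type in its block of parameters. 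Here the paraproduct-free hypothesis in case (1) is exactly what lets us drop the full and partial paraproducts for those $T_i$ and keep only cancellative shifts, which is what makes arbitrarily many parameters affordable.

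Next I would treat the surviving model cases. The core is the purely cancellative case, where every $S_i$ is a (multi-parameter) dyadic shift: here one expands $b$ in the relevant product Haar basis and commutes the shift with the multiplication by $b$, producing a finite sum of "paraproduct-type" rearrangements of the Haar coefficients of $b$ against the shift coefficients. Each resulting term should be dominated, via a block-by-block application of the weighted (Bloom) $H^1$--$\BMO$ duality and square-function / Carleson-embedding estimates, by the little product BMO norm $\normbmo{b}{\bI}{\nu}$ times the $L^p(\mu)\to L^p(\lambda)$ bound; the two weights $\mu,\lambda$ enter through the usual $\nu$-weighted Carleson condition that \eqref{eq:littleproductBMO} encodes, and the multi-parameter Muckenhoupt $A_p$ control of $\mu$ and $\lambda$ is used to run the weighted square function bounds in each coordinate block. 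For the blocks $\bI_i$ with $\#\bI_i \le 2$ that are not paraproduct free, one additionally has full and partial paraproducts; the full paraproducts are handled by the weighted product BMO duality as in \eqref{eq:BloomProdBMO}, while the partial paraproducts are decomposed into a "paraproduct direction" and a "shift direction" and the paraproduct direction is controlled by a one-parameter sparse domination argument (this is precisely the step forcing the $\#\bI_i \le 2$ restriction, since sparse bounds are a one-parameter tool). The combinatorics of which $S_i$ is a shift versus a paraproduct splits $b$'s role among the various sub-BMO spaces — product BMO in the "free" directions, little BMO in the others — and reassembling these is exactly the definition \eqref{eq:littleproductBMO} of $\normbmo{b}{\bI}{\nu}$.

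The main obstacle, I expect, is neither conceptual novelty nor any single hard inequality but rather the \emph{bookkeeping}: in $m$ parameters one must track, for each of the $k$ commutator slots and each coordinate block $\bI_i$, whether that block contributes a shift rearrangement or a paraproduct, and correspondingly which weighted BMO estimate and which square-function bound to apply in that block, all while keeping the implicit constants depending only on $[\mu]_{A_p}, [\lambda]_{A_p}, p, d$ and the structural constants of the DMOs. The iterated structure means the commutator expansion has exponentially many terms, and one needs a clean inductive or "slot-by-slot" scheme — essentially the improved commutator decompositions of \cite{Li2018Bloom, Li2018prodArxiv} cast in multi-parameter form — so that each term is reduced to a product over blocks of already-known one-block estimates. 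Once that scaffolding is set up, every individual estimate is a weighted Carleson / $H^1$--$\BMO$ / sparse argument of a type already present in the cited literature, so the theorem follows by assembling these uniform bounds over the finitely many model configurations and averaging back over the dyadic grids via the representation theorem.
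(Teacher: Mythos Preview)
Your strategy is essentially the paper's: reduce via the representation theorem to DMOs, expand each product $bf$ into the paraproduct operators $A_1^j, A_2^j, A_3^j$ in every parameter, bound the ``legal'' terms directly by Lemma~\ref{lem:paraBound}, pair the ``illegal'' $A_3$-type terms so that differences of averages of $b$ appear and telescope them (Lemma~\ref{lem:diffofavgs}) into the general form \eqref{eq:generalterm} handled by Lemma~\ref{lem:generalEstimation}, and use one-parameter sparse domination for the partial-paraproduct blocks. The one point to flag is that the illegal terms are \emph{not} individually bounded and do not reduce to a product of one-block estimates---the commutator cancellation is genuinely needed to pair them before any $H^1$--$\BMO$ duality applies---so the ``slot-by-slot'' scheme you describe must be understood as organizing this pairing, not as a term-by-term factorization.
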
 

Structure of this paper is the following. In the begining of Section \ref{sec:defs}, we give the notation which we are going to use in the entire paper. Then we give the definitions and recall some standard estimates.

In Section \ref{sec:expansion}, we introduce expansions of function products and paraproduct operators. The main result of this section is to prove Bloom type upper bound for these multi-parameter paraproduct operators.

Then we split the study of our main theorem \ref{thm:main}. In Section \ref{sec:shift}, we consider the paraproduct free CZOs by first proving the results for multi-parameter shifts. Then using the representation theorem we get the result for paraproduct free CZOs. 

In Section \ref{sec:journe}, we begin with four parameter product space. We prove the case of the main theorem with two bi-parameter CZOs. The strategy of the proof is to use representation theorem such that it is enough to study commutators with DMOs. We illustrate how to prove Bloom type upper bound for these commutators by a careful study of a certain special case.
Then by iterating previous result and combining with the result of Section \ref{sec:shift} we get our main theorem \ref{thm:main}.

\subsection*{Acknowledgements}
E.A. was supported by the Academy of Finland through the grant 306901, and is a member of the Finnish Centre of Excellence in Analysis and Dynamics Research.
The author wishes to thank Kangwei Li for helpful discussions. This work is a part of the PhD thesis of the author
supervised by Henri Martikainen.
\section{Definitions and preliminaries}\label{sec:defs}
\subsection{Basic notation}\label{sec:basicnot}
We are working with the multi-parameter setting  in $\R^{d_1} \times \dots \times \R^{d_m}.$ We set $d_1 + d_2 + \dots + d_m = d.$
To avoid confusion we need consistent notation. 
For example, every $x \in \R^{d_1} \times \dots \times \R^{d_m}$ is a tuple $(x_1, x_2, \dots, x_m),$  where $x_i  \in \R^{d_i}.$ Similarly, every rectangle $I_1 \times I_2 \times \dots \times I_m \subset \R^{d_1} \times \dots \times \R^{d_m}$ consists of cubes $I_i \subset \R^{d_i}.$  Rather than writing each cube separately, we let $I_\seqm,$ where $\seqm$ denotes the vector $ (1,2,\dots, m),$ be a rectangle in $ \R^{d} = \R^{d_1} \times \dots \times \R^{d_m}$
and also for functions we can write $1_{I_1} \otimes 1_{I_2} \otimes \dots \otimes 1_{I_m} = 1_{I_1 \times \dots \times I_m} =: 1_{I_{\seqm}}.$  
Since it really does not matter what order tensor form functions are written, we do not distinguish between $1_{I_1} \otimes 1_{I_2}$ and $ 1_{I_2} \otimes  1_{I_1}.$ 

We often need operators to be defined only for some of the variables -- e.g.  for $f\colon\R^{d_1 +\dots + d_m}
\to \C$ and for some operator $U$ in $\R^{d_2},$  $U^2 f$ is defined as 
$$
U^{2} f (x)= (U f (x_1, \cdot, x_3, \dots, x_m) )(x_2).
$$
Notice that for example, for $g\colon \R^{d_1 + d_2} \rightarrow \C$ we would also have
$$
U^{2} g (x_1, x_2) = (U g (x_1, \cdot))(x_2).
$$
Since it is clear from the context, we do not make notational difference between these two. Additionally,  we always write  to the supscript the parameters where the operator is defined, i.e. for example, an operator $U^1$ is defined in $\R^{d_1}.$

Similarly, for integral pairings: 
$$
\pair{f, g}_1 (z) := \pair{f(\cdot, z), g}_{1},
$$
 where $f\colon  \R^{d_1 + d_2 + \dots + d_m} \rightarrow \C,$ $g\colon \R^{d_1} \rightarrow \C$ and $z \in \R^{d_2 + d_3 + \dots + d_m}.$ Although, for $\pair{f,g},$ where $f\colon  \R^{d} \rightarrow \C$ and $g\colon \R^{d} \rightarrow \C,$ it makes sense to leave out the parameters since in this case, the output of the pairing is a constant. Additionally, for example, for $f\colon \R^{d_1 + d_2 + \dots + d_m} \to \C$ we allow the notation $\pair{f, 1_{I_2}/|I_2| \otimes 1_{I_1}/|I_1|}_{2,1}$ and  understand it as $\pair{f,1_{I_1}/|I_1| \otimes 1_{I_2}/|I_2|}_{1,2}.$

 In addition, if $U^{v_1,v_2,\dots, v_n} $ is an operator in $\R^{d_{v_1} + d_{v_2} + \dots + d_{v_n} }$ for some subsequence $\seq{v} = (v_i)_{i=1}^n$ of $  \sulk{1,2,\dots, m},$ then we simply write  $U^{\seq{v}}f = U^{v_1,v_2,\dots, v_n} f.$  For brevity and clarity reasons,  for example, the operator $U^{2,1}$ is understood as the operator $U^{1,2}$ defined in $\R^{d_1 + d_2}.$

 Moreover, assume that  $\bI = \ksulku{\bI_1, \bI_2}$ is a partition of $\ksulku{1,2,\dots, m},$ that is, $\bigcup_{i} \bI_i = \ksulku{1,2,\dots, m}$ and $\bI_i$ are mutually disjoint, and $\bI_1 \neq \emptyset \neq \bI_2.$ Let $\seq{v} = (i)_{i \in \bI_1},$ $x_{\seq{v}} \in \prod_{i \in \bI_1} \R^{d_i},$ and let  $\varphi$ be a function $\R^d.$ Then we define that  $\varphi_{x_{\seq{v}}}$ is the obvious function defined on $\prod_{i \in \bI_2} \R^{d_i},$ where $x_{\seq{v}}$ has been fixed. For example, let $f\colon \R^{d_1 + d_2} \rightarrow \C$ and fix $ (x_1, x_2) \in \R^{d_1 +d_2},$ then $f_{x_1} (y_2) = f(x_1, y_2)$ and $f_{x_2} (y_1) = f(y_1, x_2)$ for all $(y_1, y_2) \in \R^{d_1 + d_2}.$

We denote dyadic grids in $\R^{d_i}$ by $\bD^{d_i}$ and $\bD^{\seq{d}}= \bD^{d_1,d_2,\dots,d_m} := \bD^{d_1} \times \bD^{d_2} \times \dots \times \bD^{d_m}.$ 
If $I_i \in \bD^{d_i}$, then $I_i^{(k_i)}$ denotes the
unique dyadic cube $Q_i \in \bD^{d_i}$ so that $I_i \subset Q_i$ and $\ell(Q_i) = 2^{k_i} \ell(I_i),$ where $\ell(J_i)$ denotes side length of a cube $J_i$. Similarly, for rectangles: if $I_{\seqm} \in \bD^{\seq{d}},$ then $I^{(\seq{k})}_{\seqm} = I_1^{(k_1)} \times I_2^{(k_2)} \times \dots \times I_m^{(k_m)}.$ In addition, for $I_i \in \bD^{d_i}$ we define  $\ch(I_i) = \ksulku{Q_i \in \bD^{d_i}: I_i = Q_i^{(1)}}.$

For $I_i \in \bD^{d_i}$ we denote by $h_{I_i}$ a cancellative $L^2$ normalised Haar function. Here we suppressed the presence of  $\eta \in \ksulku{0,1}^{d_i} \setminus \ksulku{0}.$ In particular, when we write $h_{I_i} h_{I_i}$ it can really stand for $h_{I_i}^{\eta_1} h_{I_i}^{\eta_2}$ for two different
$\eta_1, \eta_2$ -- however, this causes no problems as we only ever use the following size property $|h_{I_i} h_{I_i}| = 1_{I_i}/|{I_i}|.$ We recall some basic properties:
$\int h_{I_i} = 0,$ 
$\pair{h_{I_i}^{\eta_1}, h_{J_i}^{\eta_2}} = \delta_{I_i, J_i} \delta_{\eta_1, \eta_2},$
and
$ h_{I_i}^0 = 1_{I_i}/|I_i|^\pinv{2} = |h_{I_i}|.$

\subsection*{Martingale representation}

Let $\bD^{d_i}$ be some dyadic grid in $\R^{d_i}$ and suppose that $f$ is an appropriate function defined on $\R^d.$ Let $x = (x_1, \dots, x_m) \in \R^d$ and $\seq{u}_i = (j)_{j \in \colM \setminus \ksulku{i}}$ hence $x_{\seq{u}_i} = (x_j)_{j \in \ksulku{1,2,\dots , m} \setminus \{i \}}\in \prod_{j \in  \ksulku{1,2,\dots , m} \setminus \{i \}} \R^{d_j}.$ 

For $\varphi$ defined on $\R^{d_{\seq{v}}}$ and rectangle $Q_{\seq{v}} \subset \R^{d_{\seq{v}}}$ we denote  the integral average 
$$ \inv{|Q_\seq{v}|} \int_{Q_\seq{v}} \varphi $$
by $\pair{\varphi}_{Q_\seq{v}},$ where $\seq{v}$ is a subsequence of $(1,2,\dots, m).$ In addition, let $\ksulku{\bI_1, \bI_2}$ be a partition of $\ksulk{1,2,\dotsm m}$ such that $\bI_1 \neq \emptyset \neq \bI_2.$ We define $\pair{f}_{Q_{\seq{v}}, \seq{v}} \colon \R^{d_{\seq{v}'}} \rightarrow \C$ as 
$$
\pair{f}_{Q_{\seq{v}}, \seq{v}}(y_{\seq{v}'}) = \inv{|{Q_\seq{v}}|}\int_{Q_{\seq{v}}} f_{y_{\seq{v}'}}(y_{\seq{v}}) \dd y_\seq{v},
$$ where $y_{\seq{v}'} \in \R^{d_{\seq{v}'}},$ $\seq{v} = (i)_{i \in \bI_1}$ and  $\seq{v}' = (i)_{i\in \bI_2}.$

For all $i = 1,2, \dots, m,$ and $I_i \in \bD^{d_i}$ define the one-parameter martingale difference
\begin{align*}
\Delta_{I_i}^i f (x) &= (\Delta_{I_i}^i f_{x_{\seq{u}_i}})(x_i) = \sum_{Q_i \in \ch{(I_i)}} (\pair{f_{x_{\seq{u}_i}}}_{Q_i} - \pair{f_{x_{\seq{u}_i}}}_{I_i})1_{Q_i}(x_i).
\end{align*} 
The multi-parameter martingale difference is defined as iterated one-parameter martingale differences
$$
\Delta_{I_\seq{v}}^\seq{v} f (x) =  \Delta_{I_ {v_i}}^{v_i}  ( \Delta_{I_{\seq{v}_i'}}^{\seq{v}_i'} f)(x) = \dots = \Delta_{I_{v_i}}^{v_i}(\dots (\Delta_{I_{v_j}}^{v_j} f))(x),
$$
where $\seq{v} = (v_i)_{i = 1}^n$ is a subsequence of $\seqm,$ $I_\seq{v} = I_{v_1} \times \dots \times I_{v_n} \in \bD^{d_{\seq{v}}},$ $\seq{v}_i'$ is the sequence without the parameter $i$ and  order of the one-parameter martingale differences is arbitrary.  
Notice that  we have the following equality
$$
 \Delta_{I_i}^i f  = \sum_{\eta \in \ksulku{0,1}^{d_i}\setminus \ksulku{0}} \pair{f, h_{I_i}^\eta}_i \otimes h_{I_i}^\eta = \pair{f, h_{I_i}}_i \otimes h_{I_i},
$$
where $I_i \in \bD^{d_i}.$ 
Naturally, in the multi-parameter situation, we have
\begin{align*}
\Delta_{I_{\seq{v}}}^{\seq{v}} f &=   \pair{\Delta_{I_{\seq{v}'_i}}^{\seq{v}'_i}f, h_{I_{v_i}}}_{v_i}  \otimes h_{I_{v_i}} = \dots = \pair{f, h_{I_{v_1}} \otimes \dots \otimes h_{I_{v_n}}} h_{I_{v_1}} \otimes \dots h_{I_{v_n}} \\
&= \pair{f, h_{I_{\seq{v}}}}_{\seq{v}}  \otimes h_{I_\seq{v}},
\end{align*}
where $\seq{v} = (v_i)_{i = 1}^n$ is a subsequence of $\seqm,$ and  $\seq{v}_i'$ is the sequence without the parameter $i.$
Notice, if $\seq{v} = \seqm,$ then  
$$
\pair{f, h_{I_{\seq{v}}}}_{\seq{v}}  \otimes h_{I_\seq{v}} = \pair{f, h_{I_{\seqm}}} h_{I_\seqm}.
$$

Define the one-parameter martingale block
$$
\Delta_{K_i, k_i}^{i} f = \sum_{\substack{I_i \in \bD^{d_i}\\ I_i^{(k_i)} = K_i}} \Delta_{I_i}^i f$$
and the multi-parameter martingale block 
$$
\Delta_{K_{\seq{v}}, k_\seq{v}}^{\seq{v}} f  =  \sum_{\substack{I_\seq{v} \in \bD^{d_\seq{v}}\\ I_\seq{v}^{(k_\seq{v})} = K_\seq{v}}} \Delta_{I_\seq{v}}^\seq{v} f,
$$
where $\seq{v}$ is a subsequence of $\seqm.$

\subsection*{Square functions}
Define the one-parameter square function
$$
S_{\bD^{d_i}}^i f  = \Bsulk{\sum_{I_i \in \bD^{d_i}} |\Delta_{I_i}^i f|^2 }^\pinv{2} = \Bsulk{\sum_{I_i \in \bD^{d_i}} |\pair{ f, h_{I_i}}_{i}|^2 \otimes \avgfu{I_i} }^\pinv{2}.
$$ 
Since we will work on fixed dyadic grid, we abbreviate $S_{\bD^{d_i}}^i$ by $S^i.$ In addition, by the same reasoning we abbreviate 
$
\sum_{I_i \in \bD_i} 
$ by $\sum_{I_i},$ if there is no reason to emphasize  the dyadic grid, where the sum is taken over. 

Let $\seq{v}$ be a subsequence of $(1,2,\dots, m).$
Define the multi-parameter square function 
$$
S^{\seq{v}}_{\bD^{d_{\seq{v}}}} f =  \Bsulk{\sum_{I_\seq{v} \in \bD^{d_{\seq{v}}}} |\Delta_{I_\seq{v}}^\seq{v} f|^2 }^\pinv{2}.
$$
If $\seq{v}$ is a genuine subsequence, then we have $$ \Bsulk{\sum_{I_\seq{v} \in \bD^{d_{\seq{v}}}} |\Delta_{I_\seq{v}}^\seq{v} f|^2 }^\pinv{2} = \Bsulk{\sum_{I_\seq{v} \in \bD^{d_{\seq{v}}}} |\pair{ f, h_{I_\seq{v}}}_\seq{v}|^2 \otimes \avgfu{I_\seq{v}} }^\pinv{2}$$ and obviously,  for  $\seq{v} = \seqm$ we have
 $$ \Bsulk{\sum_{I_\seqm \in \bD^{d_{\seqm}}} |\Delta_{I_\seqm}^\seqm f|^2 }^\pinv{2} = \Bsulk{\sum_{I_\seqm \in \bD^{d_{\seqm}}} |\pair{ f, h_{I_\seqm}}|^2 \avgfu{I_\seqm} }^\pinv{2}.$$

\subsection*{Maximal functions}

Define the one-parameter dyadic maximal function
\begin{align*}
M^i_{\bD^{d_i}} f (x) = M^i_{\bD^{d_i}} f_{x_{\seq{u}_i}} (x_i) &= \sup_{\substack{Q_i \in \bD_{d_i} \\ Q_i \ni x_i}} \inv{|Q_i|} \int_{Q_{i}} \abs{f_{x_{\seq{u}_i}} (y_i)} \dd y_i  \\
&= \sup_{\substack{Q_i \in \bD^{d_i}}} 1_{Q_i} (x_i) \pair{|f_{x_{\seq{u}_i}}|}_{Q_i}.
\end{align*}

Similarly, as with the square functions, we suppress the dyadic grid, if there is no reason to specify it.

Let $\bI = \ksulk{\bI_1, \bI_2}$ be a partition of $\colM$ and we require that $\bI_1 \neq \emptyset.$ Let $\seq{v} = (i)_{i\in \bI_1}$ and $\seq{u} = (i)_{i \in \bI_2}.$  
Define the strong multi-parameter dyadic maximal function
\begin{align*}
M^{\seq{v}} f (x) = M^{\seq{v}} f_{x_{\seq{u}}} (x_{\seq{v}}) &= \sup_{Q_{\seq{v}}: Q_{\seq{v}} \ni x_{\seq{v}}} \inv{\sabs{Q_{\seq{v}}}}\int_{Q_\seq{v}} |f_{x_\seq{u}}(y_{\seq{v}})| \dd y_\seq{v}\\
&= \sup_{Q_{\seq{v}}} 1_{Q_\seq{v}} (x_{\seq{v}}) \pair{\sabs{ f_{x_{\seq{u}}}}}_{Q_{\seq{v}}},
\end{align*}
where the supremum is taken over the dyadic rectangles in $\bD^{d_\seq{v}}.$
If $\bI_2 = \emptyset,$ then $f_{x_{\seq{u}}}$ is understood as just $f.$

Observe that the strong maximal function is dominated by the iterated one-parameter maximal functions. For example, in the bi-parameter case  we have
\begin{align*}
\inv{|I_1 \times I_2|}\int_{I_1 \times I_2} |f(y_1,y_2)| \dd y_1 \dd y_2 &= \inv{| I_2|} \int_{I_2} \pair{\sabs{f}}_{I_1, 1} (y_2) \dd y_2 \\
&\leq  \inv{| I_2|} \int_{I_2} M^1 f (x_1, y_2) \dd y_2 \\
&\leq  M^2 M^1 f (x_1, x_2)
\end{align*}
for all $(x_1, x_2) \in I_1 \times I_2.$
Hence, the boundedness of the strong maximal function follows directly from the boundedness of the one-parameter maximal function.

\subsection*{Weights}
A weight $w$ ($w \in L^1_{loc}(\R^{d})$ and $0< w(x)< \infty$ a.e.) belongs to multi-parameter $A_p(\R^{d_1} \times \dots \times \R^{d_m}), 1 < p < \infty,$ if
 $$
 [w]_{A_p} := \sup_{I} \pair{w}_{I} \pair{w^{1-p'}}_{I}^{p - 1} < \infty,
 $$
 where the supremum is taken over $I = I_1 \times I_2 \times \dots \times I_m,$ where $I_i \subset \R^{d_i}$ are cubes with sides parallel to the axes.
We have
$w \in A_p(\R^{d_1} \times \dots \times \R^{d_m})$ if and only if for all $i = 1,2,\dots,m$ 
$$
 \esssup{x_{\seq{u}_i} } [w_{x_{\seq{u}_i}}]_{A_p( \R^{d_i})} < \infty,
$$
where the supremum  is taken over $x_{\seq{u}_i} = (x_j)_{j \in \ksulku{1,2,\dots , m} \setminus \{i \}} \in \prod_{j \in  \ksulku{1,2,\dots , m} \setminus \{i \}} \R^{d_j},$ and furthermore, we have
$$
\max_{i = 1,2, \dots,m}  \esssup{x_{\seq{u}_i} } [w_{x_{\seq{u}_i}}]_{A_p( \R^{d_i})} \leq [w]_{A_p(\R^d)}.
$$

We say that a weight $w$ belongs to one-parameter $A_\infty(\R^{d_i})$ if 
$$
[w]_{A_\infty(\R^{d_i})} = \sup_{I_i} \Bsulk{\inv{|I_i|} \int_{I_i} w }\exp \Bsulk{\inv{|I_i|}\int_{I_i} \log w^{-1}} < \infty, 
$$
where the supremum is taken over the cubes in $\R^{d_i}.$
Recall that in the one-parameter setting a weight $w$ belongs to $A_\infty(\R^{d_i})$ if $w \in A_p(\R^{d_i})$ for some $p < \infty.$

Hence, we say that a weight $w$ belongs to multi-parameter $A_\infty(\R^d)$ if $w_{x_{\seq{u}_i}}$ belongs to $A_{\infty}(\R^{d_i})$ uniformly for every parameter $i \in \ksulk{1,2,\dots, m}.$
\subsection{Standard estimates}
We record some standard estimates. These estimates and some estimates that follow from these are used implicitly in this paper.

First, we record  $A_\infty$-extrapolation result from Cruz-Uribe-Martell-Pérez \cite{Cruz-Uribe2004}.
\begin{lem}\label{lem:extrapol}
Let $(f, g)$ be a pair of positive functions defined on $\R^d$. Suppose that there exists some $0 < p_0 < \infty$ such that for every $w \in A_\infty(\R^d)$ we have
\begin{equation}\label{eq:extrapol}
\int_{\R^d} f^{p_0} w 	\leq C([w]_{A_\infty}, p_0) \int_{\R^d} g^{p_0} w.
\end{equation}

Then,
 for all $0 < p < \infty$ and $w \in A_\infty (\R^d)$ we have
$$
\int_{\R^d} f^{p} w 	\leq C([w]_{A_\infty}, p) \int_{\R^d} g^{p} w.
$$

In addition, let $\{(f_j,g_j)\}_j$ be a sequence of  pairs of positive functions defined on $\R^d.$ Suppose that for some $0 < p_0 < \infty$  pair $(f_j, g_j)$ satisfies inequality \eqref{eq:extrapol} for every $j.$
Then, for all $0 < p,q < \infty$ and $w \in A_\infty (\R^d)$ we have
$$
\Bnorm{\Bsulk{\sum_j (f_j)^q}^\pinv{q}}{L^p(w)} \lesssim_{[w]_{A_\infty}} \Bnorm{\Bsulk{\sum_j (g_j)^q}^\pinv{q}}{L^p(w)},
$$
where $\{(f_j,g_j)\}_j$ is a sequence of  pairs of positive functions defined on $\R^d.$
\end{lem}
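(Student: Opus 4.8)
The plan is to establish the scalar statement first by the Rubio de Francia extrapolation scheme adapted to the $A_\infty$ setting, and then to deduce the vector-valued $\ell^q$-version from it by a short bootstrap. The device that makes $A_\infty$-extrapolation work -- in place of the Hardy--Littlewood maximal function used in classical $A_p$-extrapolation -- is the \emph{weighted} maximal operator $M_w h = \sup_Q 1_Q\, \frac{1}{w(Q)}\int_Q |h| w$: for every weight $w$ and every $s \in (1,\infty)$ one has $\norm{M_w h}{L^s(w)} \le c(s,d)\norm{h}{L^s(w)}$, with \emph{no} Muckenhoupt hypothesis on $w$, simply because $M_w$ is the (dyadic, or uncentered) maximal operator attached to the measure $w\,\dd x$. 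Consequently the Rubio de Francia iteration $R_w h := \sum_{j \ge 0} (2c(s,d))^{-j} M_w^{\,j} h$ produces, for each $0 \le h \in L^s(w)$, a function $H = R_w h$ with $h \le H$, $\norm{H}{L^s(w)} \le 2\norm{h}{L^s(w)}$, and $M_w H \le 2c(s,d)\, H$; the last property says that $H$ lies in the weighted class $A_1(w)$ with constant depending only on $s$ and $d$. The bridge to the hypothesis is the standard fact that if $w \in A_\infty$ and $H \in A_1(w) \subset A_\infty(w)$, then $Hw \in A_\infty$ with $[Hw]_{A_\infty}$ controlled by $[w]_{A_\infty}$, $s$ and $d$.

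With these tools, for a target exponent $p > p_0$ I would argue by duality. We may assume $\int g^p w < \infty$. Put $r = p/p_0 > 1$; then $\int f^p w = \norm{f^{p_0}}{L^r(w)}^r$ and $\norm{f^{p_0}}{L^r(w)} = \sup\{\int f^{p_0} h w : h \ge 0,\ \norm{h}{L^{r'}(w)} \le 1\}$. For each admissible $h$, set $H = R_w h$ with $s = r'$; using $h \le H$, the hypothesis \eqref{eq:extrapol} applied to the weight $Hw \in A_\infty$, Hölder's inequality, and $\norm{H}{L^{r'}(w)} \le 2$, we obtain
$$
\int f^{p_0} h w \le \int f^{p_0} Hw \le C([Hw]_{A_\infty}, p_0)\int g^{p_0} Hw \le 2\, C\, \norm{g^{p_0}}{L^r(w)}.
$$
Taking the supremum over $h$ gives $\int f^p w \lesssim \int g^p w$ with constant depending only on $[w]_{A_\infty}$, $p$, $p_0$ and $d$.

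The remaining case $0 < p < p_0$ is where I expect the main difficulty, and it is handled by the ``downward'' Rubio de Francia trick. Hölder's inequality with exponents $p_0/p$ and $(p_0/p)'$ gives, for any weight $u$,
$$
\int f^p w = \int f^p u^{p/p_0}\cdot\bsulk{w\, u^{-p/p_0}} \le \Bsulk{\int f^{p_0} u}^{p/p_0}\, \Bsulk{\int \bsulk{w\, u^{-p/p_0}}^{(p_0/p)'}}^{1/(p_0/p)'},
$$
and if $u \in A_\infty$ the hypothesis bounds the first factor by $C([u]_{A_\infty},p_0)\int g^{p_0} u$. The task is to choose $u$, again by a Rubio de Francia iteration of $M_w$ applied to an appropriate power of $g$ measured against $w$, so that $u \in A_\infty$ with a constant depending only on $[w]_{A_\infty}$ and the exponents, while at the same time $\int g^{p_0} u \lesssim \int g^p w$ and the second factor is $\lesssim \int g^p w$; the reverse Hölder inequality for $A_\infty$ weights is what reconciles these two requirements. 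This delicate balancing is precisely the content of \cite{Cruz-Uribe2004}, and I would follow their construction.

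Finally, the vector-valued statement is deduced from the scalar one already proved. Fix $q$ and truncate, setting $F_N = \bsulk{\sum_{j \le N} f_j^q}^{1/q}$ and $G_N = \bsulk{\sum_{j \le N} g_j^q}^{1/q}$. Applying the scalar extrapolation separately to each pair $(f_j, g_j)$ at the exponent $q$ yields $\int f_j^q w \le C([w]_{A_\infty}, q, d)\int g_j^q w$ for all $w \in A_\infty$, with a constant independent of $j$; summing over $j \le N$ shows that the pair $(F_N, G_N)$ itself satisfies \eqref{eq:extrapol} with $p_0$ replaced by $q$. A second application of the scalar extrapolation, now to $(F_N, G_N)$, gives $\norm{F_N}{L^p(w)} \lesssim \norm{G_N}{L^p(w)}$ for every $0 < p < \infty$ and $w \in A_\infty$, and letting $N \to \infty$ by monotone convergence completes the proof.
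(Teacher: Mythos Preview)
The paper does not prove this lemma; it simply records it as the $A_\infty$-extrapolation theorem of Cruz-Uribe--Martell--P\'erez \cite{Cruz-Uribe2004}, with no argument given. Your sketch follows precisely the approach of that reference: the Rubio de Francia algorithm built from the weighted maximal operator $M_w$ (whose $L^s(w)$-boundedness requires no Muckenhoupt condition on $w$), the bridge $H\in A_1(w),\ w\in A_\infty \Rightarrow Hw\in A_\infty$, and then the two-step bootstrap (first extrapolate each pair to exponent $q$, sum, then extrapolate the resulting pair to exponent $p$) for the vector-valued consequence. So there is nothing to compare against---you have supplied what the paper chose to omit.

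One small remark: the paper's $A_\infty(\R^d)$ is the \emph{multi-parameter} class $A_\infty(\R^{d_1}\times\cdots\times\R^{d_m})$, whereas \cite{Cruz-Uribe2004} is written in one parameter, and your sketch is also phrased over cubes. Neither the paper nor your proposal addresses this explicitly. The extension is routine---replace cubes by dyadic rectangles throughout; the rectangular $M_w$ is still $L^s(w)$-bounded by iterating the one-parameter weighted maximal functions, and the $A_1(w)\Rightarrow Hw\in A_\infty$ implication can be checked coordinatewise---but strictly speaking it deserves a sentence.
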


\begin{lem}\label{lem:squareSim}
For $p \in (1,\infty)$ and $w \in A_p(\R^d)$ we have
$$
\norm{f}{L^p(w)} \sim_{[w]_{A_p}} \norm{S^{\seq{v}} f }{L^p(w)},
$$
where $\seq{v}$ is any subsequence of $(1,2,\dots,m).$
\end{lem}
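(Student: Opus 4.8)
The plan is to reduce the multi-parameter square function equivalence to the one-parameter case, which is the classical Hunt--Muckenhoupt--Wheeden / Petermichl-type estimate, by exploiting the fact that the multi-parameter martingale difference $\Delta_{I_{\seq{v}}}^{\seq{v}}$ is an \emph{iteration} of one-parameter martingale differences $\Delta_{I_{v_i}}^{v_i}$, as recorded in Section \ref{sec:basicnot}, together with the characterisation that $w \in A_p(\R^d)$ if and only if $w_{x_{\seq{u}_i}} \in A_p(\R^{d_i})$ uniformly in $x_{\seq{u}_i}$, with the quantitative bound $\esssup_{x_{\seq{u}_i}} [w_{x_{\seq{u}_i}}]_{A_p(\R^{d_i})} \le [w]_{A_p(\R^d)}$. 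Write $\seq{v} = (v_1, \ldots, v_n)$, so that $S^{\seq{v}} f = S^{v_1} S^{v_2} \cdots S^{v_n} f$ in the sense that each $S^{v_i}$ takes the square function in the $v_i$-th variable only (here I use that $\Delta^{\seq{v}}_{I_{\seq{v}}} = \Delta^{v_1}_{I_{v_1}} \cdots \Delta^{v_n}_{I_{v_n}}$ and that the $\ell^2$ sum over $I_{\seq{v}} \in \bD^{d_{\seq{v}}}$ factors as iterated $\ell^2$ sums over $I_{v_j} \in \bD^{d_{v_j}}$).

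First I would establish the one-parameter weighted bound: for any index $i \in \{1, \ldots, m\}$ and any $w \in A_p(\R^d)$,
$$
\norm{S^i g}{L^p(w)} \sim_{[w]_{A_p(\R^d)}} \norm{g}{L^p(w)}.
$$
This follows by freezing the variables $x_{\seq{u}_i}$, applying the classical one-parameter weighted square function estimate to $g_{x_{\seq{u}_i}}$ on $\R^{d_i}$ with the weight $w_{x_{\seq{u}_i}} \in A_p(\R^{d_i})$ — whose $A_p$ constant is bounded by $[w]_{A_p(\R^d)}$ uniformly — and then integrating in $x_{\seq{u}_i}$ using Fubini. The constants in the classical estimate depend only on $p$, $d_i$ and the one-parameter $A_p$ constant, hence are controlled by a quantity depending only on $[w]_{A_p(\R^d)}$.

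Next I would iterate. Applying the one-parameter bound in the variable $v_1$ to the function $g = S^{v_2} \cdots S^{v_n} f$ gives
$$
\norm{S^{v_1} S^{v_2} \cdots S^{v_n} f}{L^p(w)} \sim_{[w]_{A_p}} \norm{S^{v_2} \cdots S^{v_n} f}{L^p(w)},
$$
and repeating for $v_2, \ldots, v_n$ in turn collapses the right-hand side to $\norm{f}{L^p(w)}$. At each step the weight is the \emph{same} $w$, so the $A_p$ constant never changes and the implied constants multiply to something still depending only on $[w]_{A_p(\R^d)}$, $p$ and $d$. The only subtlety — and the main obstacle, though a mild one — is to verify carefully that the iterated one-parameter square functions genuinely reassemble into the multi-parameter square function $S^{\seq{v}}$, i.e.\ that $\sum_{I_{\seq{v}}} |\Delta^{\seq{v}}_{I_{\seq{v}}} f|^2 = \sum_{I_{v_1}} |\Delta^{v_1}_{I_{v_1}}(\sum_{I_{v_2}} |\Delta^{v_2}_{I_{v_2}}(\cdots)|^2)^{1/2}|^2$ in the appropriate sense; this is exactly the Fubini-type commutation of the martingale differences with the pointwise absolute values and $\ell^2$ sums in disjoint variable blocks, which holds because distinct one-parameter martingale differences act on distinct, independent variables. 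Once this bookkeeping is in place the proof is complete; one could alternatively phrase the whole argument via Lemma \ref{lem:extrapol} applied coordinatewise, but the direct iteration is cleaner here.
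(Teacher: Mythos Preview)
The paper records this lemma without proof as a standard estimate, so there is no paper argument to compare against directly. Your overall strategy --- reduce to the one-parameter weighted square function equivalence by freezing variables and then iterate --- is the right one and is how this result is normally proved. However, there is a genuine gap at precisely the point you flag as a ``mild'' subtlety: the pointwise identity
\[
S^{\seq{v}} f = S^{v_1}\bigl(S^{v_2}(\cdots S^{v_n} f)\bigr)
\]
that your iteration relies on is \emph{false}. The martingale difference $\Delta_{I_{v_1}}^{v_1}$ is linear and commutes with the other $\Delta_{I_{v_j}}^{v_j}$ because they act on disjoint variable blocks, but it does \emph{not} commute with the absolute value and $\ell^2$-sum hidden inside $S^{v_2}$. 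Concretely, $\Delta_{I_{v_1}}^{v_1}\bigl((\sum_{I_{v_2}}|\Delta_{I_{v_2}}^{v_2} f|^2)^{1/2}\bigr)$ is not $(\sum_{I_{v_2}}|\Delta_{I_{v_1}}^{v_1}\Delta_{I_{v_2}}^{v_2} f|^2)^{1/2}$, so applying the scalar one-parameter bound to $g = S^{v_2}\cdots S^{v_n} f$ gives you control of $\norm{S^{v_1}(S^{v_2}\cdots S^{v_n} f)}{L^p(w)}$, which is not $\norm{S^{\seq{v}} f}{L^p(w)}$.

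The standard repair is to use the $\ell^2$-valued (equivalently, Kahane--Khintchine randomised) one-parameter estimate at every step after the first. From $\norm{f}{L^p(w)} \sim \bigl\|(\sum_{I_{v_1}}|\Delta_{I_{v_1}}^{v_1} f|^2)^{1/2}\bigr\|_{L^p(w)}$, apply the one-parameter equivalence in the variable $v_2$ to the \emph{family} $(\Delta_{I_{v_1}}^{v_1} f)_{I_{v_1}}$ to obtain $\sim \bigl\|(\sum_{I_{v_1},I_{v_2}}|\Delta_{I_{v_1}\times I_{v_2}}^{v_1,v_2} f|^2)^{1/2}\bigr\|_{L^p(w)}$, and continue. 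The vector-valued step follows from the scalar one via Khintchine together with $A_\infty$-extrapolation (Lemma~\ref{lem:extrapol}); this is exactly the mechanism the paper spells out in the proof of Lemma~\ref{lem:squareLower} immediately below, which you can take as a template.
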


\begin{rem}
In what follows we often assume that the appearing functions are nice -- a specific choice that works throughout the paper is that this is understood to mean bounded and compactly supported functions.
\end{rem}
 
\begin{lem}\label{lem:squareLower}
Let $\seq{v}$ be a subsequence of $(1,2,\dots,m)$ and $\seq{u}$ be a subsequence of $\seq{v}.$
Let $p \in (0,\infty)$ and $w \in A_\infty(\R^d).$ Then for nice function $f$ defined on $\R^d$ we have
$$
\norm{f}{L^p(w)} \lesssim_{[w]_{A_\infty}} \norm{S^{\seq{u}} f }{L^p(w)} \lesssim_{[w]_{A_\infty}}\norm{S^{\seq{v}} f }{L^p(w)}.
$$
\end{lem}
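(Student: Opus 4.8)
The plan is to reduce everything to the one-parameter weighted lower square function estimate $\norm{f}{L^p(w)} \lesssim_{[w]_{A_\infty}} \norm{S^i f}{L^p(w)}$ for $w \in A_\infty(\R^d)$, which is the classical Chang--Wilson--Wolff / Fefferman--Stein type bound applied to the $i$-th variable with the other variables frozen (using that $w_{x_{\seq{u}_i}} \in A_\infty(\R^{d_i})$ uniformly). The first inequality $\norm{f}{L^p(w)} \lesssim \norm{S^{\seq u} f}{L^p(w)}$ is the special case $\seq v = \seq u$ of the second, so it suffices to prove $\norm{S^{\seq u} f}{L^p(w)} \lesssim_{[w]_{A_\infty}} \norm{S^{\seq v} f}{L^p(w)}$ when $\seq u$ is a subsequence of $\seq v$. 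By iterating, it is enough to add one parameter at a time: writing $\seq v = \seq u$ with one extra index $j$ appended (or inserted), I want
$$
\norm{S^{\seq u} f}{L^p(w)} \lesssim_{[w]_{A_\infty}} \norm{S^{\seq u, j} f}{L^p(w)}.
$$

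The key step is the pointwise identity coming from the definition of the multi-parameter martingale difference as iterated one-parameter ones: for each fixed rectangle $I_{\seq u}$,
$$
\Delta_{I_{\seq u}}^{\seq u} f = \sum_{I_j \in \bD^{d_j}} \Delta_{I_j}^j \bigl( \Delta_{I_{\seq u}}^{\seq u} f \bigr),
$$
so that $\Delta_{I_{\seq u}}^{\seq u} f$ is, in the $j$-variable, recovered by its own one-parameter square function: $|\Delta_{I_{\seq u}}^{\seq u} f| = S^j(\Delta_{I_{\seq u}}^{\seq u} f)$ up to the usual square-function comparison. Applying the one-parameter lower bound in the $j$-variable to the function $\Delta_{I_{\seq u}}^{\seq u} f$ (with $w$ frozen in the other variables, which is legitimate since the relevant frozen weight lies in $A_\infty(\R^{d_j})$ uniformly) gives, after integrating,
$$
\int |\Delta_{I_{\seq u}}^{\seq u} f|^{p_0} w \lesssim_{[w]_{A_\infty}} \int \Bigl( \sum_{I_j} |\Delta_{I_j}^j \Delta_{I_{\seq u}}^{\seq u} f|^2 \Bigr)^{p_0/2} w = \int |\Delta_{I_{\seq u}, I_j}^{\seq u, j} f \text{ summed over } I_j|\cdots
$$
for any fixed $0 < p_0 < \infty$; more precisely the pair $\bigl( |\Delta_{I_{\seq u}}^{\seq u} f|, S^j(\Delta_{I_{\seq u}}^{\seq u} f) \bigr)$ satisfies the hypothesis \eqref{eq:extrapol} of Lemma \ref{lem:extrapol} with the constant independent of $I_{\seq u}$. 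Then I invoke the vector-valued part of Lemma \ref{lem:extrapol}, applied to the sequence of pairs indexed by $I_{\seq u} \in \bD^{d_{\seq u}}$, with the outer exponent $q = 2$: this upgrades the fixed-$I_{\seq u}$ bound to
$$
\Bnorm{\Bigl( \sum_{I_{\seq u}} |\Delta_{I_{\seq u}}^{\seq u} f|^2 \Bigr)^{1/2}}{L^p(w)} \lesssim_{[w]_{A_\infty}} \Bnorm{\Bigl( \sum_{I_{\seq u}} S^j(\Delta_{I_{\seq u}}^{\seq u} f)^2 \Bigr)^{1/2}}{L^p(w)},
$$
and the left side is $\norm{S^{\seq u} f}{L^p(w)}$ while the right side equals $\norm{S^{\seq u, j} f}{L^p(w)}$ after unwinding the double sum $\sum_{I_{\seq u}} \sum_{I_j} |\Delta_{I_j}^j \Delta_{I_{\seq u}}^{\seq u} f|^2 = \sum_{I_{\seq u, j}} |\Delta_{I_{\seq u, j}}^{\seq u, j} f|^2$, using the commutativity of the one-parameter martingale differences in distinct variables and the Haar expansion $\Delta_{I_j}^j \Delta_{I_{\seq u}}^{\seq u} f = \Delta_{I_{\seq u, j}}^{\seq u, j} f$.

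The main obstacle is making the ``freeze the other variables'' step rigorous and uniform: one must check that applying the one-parameter lower square function estimate to $\Delta_{I_{\seq u}}^{\seq u} f$ in a single new variable, integrated against the full multi-parameter weight $w$, indeed produces a constant depending only on $[w]_{A_\infty(\R^d)}$ — this uses the characterisation of multi-parameter $A_\infty$ recorded above (uniform one-parameter $A_\infty$ of the frozen weights) together with Fubini to integrate the one-parameter inequality over the remaining variables. Once this is set up cleanly, the rest is bookkeeping: the scalar-to-vector upgrade is exactly what the second half of Lemma \ref{lem:extrapol} is for, and the iteration over the finitely many indices in $\seq v \setminus \seq u$ is routine.
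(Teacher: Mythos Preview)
Your proposal is correct and follows essentially the same route as the paper: reduce to adding one parameter at a time, use the one-parameter $A_\infty$ lower square function bound with the remaining variables frozen (legitimate since the frozen weight is uniformly in one-parameter $A_\infty$), and then upgrade to the $\ell^2$-valued inequality via the vector-valued conclusion of Lemma~\ref{lem:extrapol}. The only cosmetic difference is that the paper verifies the extrapolation hypothesis at the specific exponent $p_0=2$ (where the $\ell^2$ sum and the integral can be exchanged trivially), while you invoke the scalar one-parameter bound as the input pair and appeal directly to the $q=2$ vector-valued extrapolation; also, your remark that the first inequality is ``the special case $\seq v=\seq u$ of the second'' should really be read as the case where the shorter sequence is empty (with the convention $S^{\emptyset}f=|f|$), which your iteration handles anyway.
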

For  completeness we record the proof.
\begin{proof}
Suppose $\seq{v} = (v_1, v_2)$ and $\seq{u} = (v_1).$ Let $(f_j)_j$ be a sequence nice functions defined on $\R^d.$
Recall  the one-parameter result
\begin{equation*}
 \norm{f_{x_{\seq{u}'}}}{L^2(w_{x_{\seq{u}'}})} \lesssim_{[w]_{A_\infty}} \norm{S^{v_1} f_{x_{\seq{u}'}} }{L^2(w_{x_{\seq{u}'}})}, 
\end{equation*}
where  $x_{\seq{u}'} = (x_i)_{i \in \{1,2,\dots,m\} \setminus \{v_1\}}.$
Using this result  we get
\begin{align*}
\int_{\R^{d_{v_1}}} \Bsulk{\sum_{j} |f_j|^2}^{\pinv{2} \cdot 2}w &= \sum_{j} \int_{\R^{d_{v_1}}} |f_j|^2 w \\
&\lesssim\sum_j \sum_{I_{v_1}} \int_{\R^{d_{v_1}}} |\Delta_{I_{v_1}}^{v_1}f_j|^2 w \\
&= \int_{\R^{d_{v_1}}} \Bsulk{\sum_{j} \sum_{I_{v_1}} |\Delta_{I_{v_1}}^{v_1} f_j|^2}^{\pinv{2} \cdot 2}w
\end{align*}
for all $w \in A_\infty(\R^d).$ Here we abbreviated the fixed $x_{\seq{u}'}.$

Now, by $A_\infty$-extrapolation Lemma \ref{lem:extrapol} we have that 
$$
\Bnorm{\Bsulk{\sum_{j} |f_j|^2}^\pinv{2}}{L^p(w)} \lesssim_{[w]_{A_\infty}} \Bnorm{\Bsulk{\sum_{j}\sum_{I_{v_1}} |\Delta_{I_{v_1}}^{v_1} f_j|^2}^\pinv{2}}{L^p(w)} 
$$
for all $w \in A_\infty(\R^d)$ and $0 < p < \infty.$ Using this for $f_j = \Delta_{I_{v_2}}^{v_2} f$ we have
$$
\norm{f}{L^p (w)} \lesssim_{[w]_{A_\infty}} \Bnorm{\Bsulk{\sum_{I_{v_2}} |\Delta_{I_{v_2}}^{v_2} f|^2}^\pinv{2}}{L^p(w)} \lesssim_{[w]_{A_\infty}} \Bnorm{\Bsulk{\sum_{I_{v_1}, I_{v_2}} |\Delta_{I_{v_1} \times I_{v_2}}^{v_1, v_2}f|^2}^\pinv{2}}{L^p(w)} 
$$
for all $w \in A_\infty(\R^d)$ and $0 < p < \infty.$
It is clear that we can iterate the previous estimations for any number of parameters.
\end{proof}

Next, we record the multi-parameter Fefferman-Stein inequality, which follows from the classical one-parameter Fefferman-Stein inequality \cite{Fefferman1971} combined by the fact that strong maximal functions can be bounded with iterated one-parameter maximal functions.
\begin{lem}[Fefferman-Stein inequality]\label{lem:FS}
Let $\seq{v}$ be a subsequence of $(1,2,\dots,m).$
For $p, q \in (1,\infty)$ we have
$$
\Bnorm{\Bsulk{\sum_j | M^{\seq{v}} f_j|^q}^\pinv{q}}{L^p(w)} \lesssim_{[w]_{A_p}} \Bnorm{\Bsulk{\sum_j | f_j|^q}^\pinv{q}}{L^p(w)}.
$$
\end{lem}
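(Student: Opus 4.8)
The plan is to reduce the multi-parameter statement to the classical one-parameter vector-valued Fefferman–Stein maximal inequality of \cite{Fefferman1971} by iterating in the parameters appearing in $\seq{v}$, using the pointwise domination of the strong maximal function $M^{\seq{v}}$ by a composition of one-parameter maximal functions that was already observed in the Maximal functions subsection. Write $\seq{v} = (v_1, \dots, v_n)$ and recall that $M^{\seq{v}} f \le M^{v_n} \cdots M^{v_1} f$ pointwise, where each $M^{v_\ell}$ is the one-parameter dyadic maximal operator acting only on the $v_\ell$-th variable (all other variables frozen).

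The key steps, in order, are as follows. First I would fix $p, q \in (1,\infty)$ and a weight $w \in A_p(\R^d)$; by the characterisation recorded in the Weights subsection, for each parameter $i$ the frozen weight $w_{x_{\seq{u}_i}}$ lies in the one-parameter class $A_p(\R^{d_i})$ uniformly in $x_{\seq{u}_i}$, with $\esssup_{x_{\seq{u}_i}} [w_{x_{\seq{u}_i}}]_{A_p(\R^{d_i})} \le [w]_{A_p(\R^d)}$. Second, I would invoke the classical one-parameter vector-valued Fefferman–Stein inequality: for a one-parameter $A_p$ weight $\omega$ on $\R^{d_i}$,
$$
\Bnorm{\Bsulk{\sum_j |M^i g_j|^q}^{\pinv{q}}}{L^p(\omega)} \lesssim_{[\omega]_{A_p}} \Bnorm{\Bsulk{\sum_j |g_j|^q}^{\pinv{q}}}{L^p(\omega)}.
$$
Third, applying this in the variable $x_{v_1}$ with $\omega = w_{x_{\seq{u}_{v_1}}}$ and integrating the resulting inequality over the remaining variables (using Fubini and that the implied constant depends only on $[w]_{A_p(\R^d)}$), I obtain
$$
\Bnorm{\Bsulk{\sum_j |M^{v_1} f_j|^q}^{\pinv{q}}}{L^p(w)} \lesssim_{[w]_{A_p}} \Bnorm{\Bsulk{\sum_j |f_j|^q}^{\pinv{q}}}{L^p(w)}.
$$
Fourth, I would iterate: apply the same one-variable estimate with the family $\{M^{v_1} f_j\}_j$ in place of $\{f_j\}_j$ and in the variable $x_{v_2}$, and so on through $v_n$, each time picking up only a constant depending on $[w]_{A_p(\R^d)}$. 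This gives the bound with $M^{v_n}\cdots M^{v_1}$ on the left, and finally the pointwise domination $M^{\seq{v}} f_j \le M^{v_n}\cdots M^{v_1} f_j$ yields the claimed inequality.

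The only real point requiring care is the book-keeping in the iteration: one must verify that freezing variables and applying the one-parameter weighted inequality stays uniform, i.e.\ that at each stage the relevant frozen weight is still a one-parameter $A_p$ weight with norm controlled by $[w]_{A_p(\R^d)}$ — which is exactly what the weight characterisation above provides — and that the vector-valued norm is handled by the one-parameter vector-valued estimate rather than a scalar one. There is no serious analytic obstacle here; the content is entirely in \cite{Fefferman1971} plus the structural reduction, which is why the statement is recorded as a lemma with only a pointer to its proof.
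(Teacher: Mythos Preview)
Your proposal is correct and follows exactly the approach indicated in the paper: the multi-parameter estimate is obtained from the classical one-parameter weighted vector-valued Fefferman--Stein inequality by dominating $M^{\seq{v}}$ pointwise by an iterated composition of one-parameter maximal functions and then applying the one-parameter bound successively in each variable, using the uniform one-parameter $A_p$ control on the frozen weights. The paper records precisely this reduction without further detail, so your write-up is a faithful expansion of it.
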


Combining previous results we get the following result:
\begin{lem}\label{lem:FSgen}
Let $\bI = \ksulku{\bI_1, \bI_2}$ be a partition of $\ksulku{1,2,\dots, m}$ such that $\bI_1 \neq \emptyset \neq \bI_2.$ 
Let $\seq{v} = (i)_{i \in \bI_1}$ and $\seq{u} = (i)_{i \in \bI_2}.$
For $p \in (1,\infty)$ and $w \in A_p(\R^d)$ we have 
\begin{align*}
\Bnorm{\sum_{K_{\seq{v}}} M^{\seq{u}'}\bbrac{\pair{f, h_{K_{\seq{v}}}}_{\seq{v}}} \otimes h_{K_{\seq{v}}}}{L^p(w)} &\sim \Bnorm{\sqsumm{M^{\seq{u}'}\bbrac{\pair{f,h_{K_{\seq{v}}}}_{\seq{v}}}}{K_{\seq{v}}}}{L^p(w)} \\
&\lesssim_{[w]_{A_p}} \norm{f}{L^p(w)},
\end{align*}
where $\seq{u}'$ is a subsequence of $\seq{u}.$ 
\end{lem}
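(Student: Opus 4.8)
The plan is to reduce the assertion to the Fefferman–Stein inequality of Lemma \ref{lem:FS} via the square function characterisations in Lemmas \ref{lem:squareSim} and \ref{lem:squareLower}. The first step is to justify the claimed equivalence
$$
\Bnorm{\sum_{K_{\seq{v}}} M^{\seq{u}'}\bbrac{\pair{f, h_{K_{\seq{v}}}}_{\seq{v}}} \otimes h_{K_{\seq{v}}}}{L^p(w)} \sim \Bnorm{\sqsumm{M^{\seq{u}'}\bbrac{\pair{f,h_{K_{\seq{v}}}}_{\seq{v}}}}{K_{\seq{v}}}}{L^p(w)}.
$$
Here I would apply Lemma \ref{lem:squareSim} in the $\seq{v}$-parameters to the function $g := \sum_{K_{\seq{v}}} M^{\seq{u}'}[\pair{f, h_{K_{\seq{v}}}}_{\seq{v}}] \otimes h_{K_{\seq{v}}}$: since $\Delta_{K_{\seq{v}}}^{\seq{v}} g = \pair{g, h_{K_{\seq{v}}}}_{\seq{v}} \otimes h_{K_{\seq{v}}} = M^{\seq{u}'}[\pair{f, h_{K_{\seq{v}}}}_{\seq{v}}] \otimes h_{K_{\seq{v}}}$ because the Haar functions $h_{K_{\seq{v}}}$ are orthonormal and $M^{\seq{u}'}$ only acts in the complementary variables, we get $S^{\seq{v}} g = \sqsumm{M^{\seq{u}'}[\pair{f,h_{K_{\seq{v}}}}_{\seq{v}}]}{K_{\seq{v}}}$ and Lemma \ref{lem:squareSim} gives $\norm{g}{L^p(w)} \sim_{[w]_{A_p}} \norm{S^{\seq{v}} g}{L^p(w)}$, which is exactly the displayed equivalence. (Here one uses that $A_p(\R^d)$ membership is preserved under taking $\seq{v}$-slices, so the hypotheses of Lemma \ref{lem:squareSim} apply with $w$ fixed and the square function taken only in the $\seq{v}$-variables; alternatively invoke the lemma as stated on the full $\R^d$.)

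For the main inequality I would estimate the right-hand side above. Using the pointwise bound for the strong maximal function by iterated one-parameter maximal functions (recorded just before the Weights subsection), it suffices to treat the case $\seq{u}' = \seq{u}$, i.e. to bound $\norm{\sqsumm{M^{\seq{u}}[\pair{f,h_{K_{\seq{v}}}}_{\seq{v}}]}{K_{\seq{v}}}}{L^p(w)}$, and then shrink back to any subsequence $\seq{u}'$ by monotonicity of the maximal function in the collection of averaging variables. Applying the Fefferman–Stein inequality of Lemma \ref{lem:FS} with the index set $\{K_{\seq{v}}\}$, with $q = 2$, and with the functions $f_{K_{\seq{v}}} := \pair{f, h_{K_{\seq{v}}}}_{\seq{v}} \otimes \avgfr{K_{\seq{v}}}$ (so that $\sum_{K_{\seq{v}}} |f_{K_{\seq{v}}}|^2 = \sum_{K_{\seq{v}}} |\pair{f,h_{K_{\seq{v}}}}_{\seq{v}}|^2 \otimes \avgfu{K_{\seq{v}}}$), and noting that $M^{\seq{u}}$ commutes with the tensor factor $\avgfu{K_{\seq{v}}}$ in the $\seq{v}$-variables up to the harmless constant $1$, we obtain
$$
\Bnorm{\sqsumm{M^{\seq{u}}\bbrac{\pair{f,h_{K_{\seq{v}}}}_{\seq{v}}}}{K_{\seq{v}}}}{L^p(w)} \lesssim_{[w]_{A_p}} \Bnorm{\sqsumm{\pair{f, h_{K_{\seq{v}}}}_{\seq{v}}}{K_{\seq{v}}}}{L^p(w)} = \norm{S^{\seq{v}} f}{L^p(w)}.
$$
Finally Lemma \ref{lem:squareSim} identifies $\norm{S^{\seq{v}} f}{L^p(w)} \sim_{[w]_{A_p}} \norm{f}{L^p(w)}$, completing the chain.

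The main obstacle — really the only subtle point — is bookkeeping the fact that the maximal function $M^{\seq{u}'}$ acts in variables disjoint from those indexing the Haar functions $h_{K_{\seq{v}}}$, so that it genuinely passes through the square-sum and is compatible with both the martingale-difference identity $\Delta_{K_{\seq{v}}}^{\seq{v}}g = M^{\seq{u}'}[\pair{f,h_{K_{\seq{v}}}}_{\seq{v}}]\otimes h_{K_{\seq{v}}}$ and the Fefferman–Stein application; one must also be careful that the tensor normalisation $\avgfu{K_{\seq{v}}}$ versus $\avgfr{K_{\seq{v}}}$ is tracked correctly so that the square function that comes out is exactly $S^{\seq{v}} f$. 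Once these compatibilities are granted, each step is a direct citation of Lemmas \ref{lem:squareSim}, \ref{lem:FS} (together with the reduction of strong maximal functions to iterated one-parameter ones), and the argument is short.
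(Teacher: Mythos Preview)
Your proposal is correct and follows essentially the same route as the paper: compute $S^{\seq{v}}$ of the sum to obtain the square-sum expression, invoke Lemma \ref{lem:squareSim} for the equivalence, apply the Fefferman--Stein inequality (Lemma \ref{lem:FS}) to remove $M^{\seq{u}'}$, and finish with Lemma \ref{lem:squareSim} again. The only difference is that the paper applies Lemma \ref{lem:FS} directly with the subsequence $\seq{u}'$, so your reduction to $\seq{u}' = \seq{u}$ via monotonicity is an unnecessary (though harmless) detour.
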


As explained earlier, we do not specify in the notation  the underlying space of the function that $M^{\seq{u}'}$ is operating. 
\begin{proof}

Notice that 
$$
S^{\seq{v}} \Bsulk{\sum_{K_{\seq{v}}} M^{\seq{u}'}\bbrac{\pair{f, h_{K_{\seq{v}}}}_{\seq{v}}} \otimes h_{K_{\seq{v}}}} =  \sqsumm{M^{\seq{u}'}\bbrac{\pair{f,h_{K_{\seq{v}}}}_{\seq{v}}}}{K_{\seq{v}}}.
$$
Hence, by Lemma \ref{lem:squareSim} we get the first conclusion
$$
\Bnorm{\sum_{K_{\seq{v}}} M^{\seq{u}'}\bbrac{\pair{f, h_{K_{\seq{v}}}}_{\seq{v}}} \otimes h_{K_{\seq{v}}}}{L^p(w)} \sim \Bnorm{\sqsumm{M^{\seq{u}'}\bbrac{\pair{f,h_{K_{\seq{v}}}}_{\seq{v}}}}{K_{\seq{v}}}}{L^p(w)}.
$$

Then using Lemma \ref{lem:FS} to the right-hand side of the previous estimate we get
 \begin{align*}
 \Bnorm{\sqsumm{M^{\seq{u}'}\bbrac{\pair{f,h_{K_{\seq{v}}}}_{\seq{v}}}}{K_{\seq{v}}}}{L^p(w)} &\lesssim \Bnorm{\sqsumm{\sabs{\pair{f,h_{K_{\seq{v}}}}_{\seq{v}}}}{K_{\seq{v}}}}{L^p(w)} \\
 &= \norm{S^{\seq{v}} f}{L^p(w)} \\
 &\lesssim \norm{f}{L^p(w)},
 \end{align*}
 where in the last step we use again Lemma \ref{lem:squareSim}.
\end{proof}

\subsection{Product BMO\texorpdfstring{$^{\bm{\seq{v}}}$}{v }  spaces}
Let  $\seq{v} = (v_i)_{i = 1}^n$ be a subsequence of $(1,2,\dots, m),$ let $w$ be a multi-parameter $A_\infty$ weight on  $\R^{d}  $ and $b \in L^1_{loc}(\R^d).$ Also let $\bD^{\seq{v}} = \prod_{i = 1}^n \bD^{v_i}$ be a product of dyadic grids.
We say that $b \in \BMOW{\seq{v}}{w; \bD^{\seq{v}}}$ if for  all nice functions $\varphi$ such that 
$
\norm{S^\seq{v}_{\bD^{\seq{v}}} \varphi }{L^1(w)} < \infty
$ we have 
$$
|\pair{b, \varphi}| \leq C_b \norm{S^\seq{v}_{\bD^{\seq{v}}} \varphi }{L^1(w)}.
$$
Then we denote the best constant $C_b$ by $\normBMO{b}{\seq{v}}{w; \bD^{\seq{v}}}.$

In addition, if $b \in \BMOW{\seq{v}}{w; \bD^{\seq{v}}} $ for all $\bD^{\seq{v}},$ then we say that $b \in \BMOW{\seq{v}}{w}.$

Furthermore, let us define the \emph{little product BMO}. 
Let $k \leq m$ and let $\bI = \ksulku{\bI_i: i\leq k}$ be a partition of $\ksulku{1,2, \dots, m}$ such that $\bI_i \neq \emptyset$ for $i =1,2\dots,k.$ We say that $b \in \bmoW{\bI}{w}$ if for all $\seq{v} = (v_j)_{j =1}^k$ such that $v_j \in \bI_j,$ we have $b\in \BMOW{\seq{v}}{w}.$ Then we set
\begin{equation}\label{eq:littleproductBMO}
\norm{b}{\bmoW{\bI}{w}} := \max_{\seq{v}} \normBMO{b}{\seq{v}}{w},
\end{equation}
where the maximum is taken over $\seq{v} = (v_j)_{j =1}^k$ such that $v_j \in \bI_j.$

For example, let $m = 3,$ $w = 1,$ $\bI_1 = \ksulku{1,2}$ and $\bI_2 = \ksulku{3}.$ Then $b \in \bmo^{\ksulk{\ksulk{1,2}, \ksulku{3}}},$ if $b \in \BMO^{(1,3)}$ and $b \in \BMO^{(2,3)}.$

\begin{rem}
We prefer this more direct square function definition over the typical square sum definition as in the introduction \eqref{eq:CFprodBMO}.
\end{rem}

\begin{rem}
If $k = 1,$ we have the standard multi-parameter little BMO space. For more details in the bi-parameter framework see e.g. \cite{Holmes2018a, Li2018Bloom}.
\end{rem}

\begin{prop}\label{prop:uniformBMO}
Let $\bI = \ksulk{\bI_1, \bI_2}$ be a partition of $\ksulk{1,2,\dots, m}$ such that $\bI_1 \neq \emptyset \neq \bI_2.$ 
Let $\seq{v} = (i)_{i \in \bI_1},$ $\seq{u} = (i)_{i \in \bI_2},$ $w$ be a multi-parameter $A_\infty$ weight on $\R^d$ and $b \in L^1_{loc}(\R^d).$ Then $b \in \BMOW{\seq{v}}{w}$ if and only if $b_{x_{\seq{u}}} \in \BMO^{\seq{v}}(w_{x_{\seq{u}}})$ uniformly on $x_{\seq{u}} \in \prod_{i \in \bI_2} \R^{d_i}$, that is,  we have
\begin{equation}\label{eq:uniformBMO}
|\pair{b_{x_{\seq{u}}}, f}| \leq C_b  \norm{S^{\seq{v}} f}{L^1(w_{x_{\seq{u}}})}
\end{equation}
for almost every $x_{\seq{u}} \in \prod_{i \in \bI_2} \R^{d_i}$ and for every nice $f$  defined on $\prod_{i \in \bI_1} \R^{d_i}.$ 
\end{prop}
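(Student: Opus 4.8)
The plan is to prove the two implications separately, and in both directions the bridge between the ``fixed-slice'' quantity $\norm{S^{\seq v} f}{L^1(w_{x_{\seq u}})}$ and the ambient quantity $\norm{S^{\seq v} g}{L^1(w)}$ is a Fubini-type argument: integrating the slice estimate \eqref{eq:uniformBMO} over $x_{\seq u}$ and recognising that $\int_{\prod_{i \in \bI_2}\R^{d_i}} \norm{S^{\seq v} f_{x_{\seq u}}}{L^1(w_{x_{\seq u}})} \ddd x_{\seq u} = \norm{S^{\seq v} f}{L^1(w)}$, where on the left $S^{\seq v}$ acts only in the $\seq v$-variables and on the right in the same way, so the two square functions literally coincide slicewise. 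The key point enabling this is that the pairing $\pair{b, f}$ for $f$ a nice function on $\R^d$ can be written as $\int_{\prod_{i \in \bI_2}\R^{d_i}} \pair{b_{x_{\seq u}}, f_{x_{\seq u}}} \ddd x_{\seq u}$, which is just Fubini's theorem.

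\textbf{The ``if'' direction.} Assume \eqref{eq:uniformBMO} holds for a.e.\ $x_{\seq u}$ with a uniform constant $C_b$. Let $f$ be a nice function on $\R^d$ with $\norm{S^{\seq v}_{\bD^{\seq v}} f}{L^1(w)} < \infty$. Then
$$
|\pair{b, f}| = \Babs{\int \pair{b_{x_{\seq u}}, f_{x_{\seq u}}} \ddd x_{\seq u}} \leq \int |\pair{b_{x_{\seq u}}, f_{x_{\seq u}}}| \ddd x_{\seq u} \leq C_b \int \norm{S^{\seq v} f_{x_{\seq u}}}{L^1(w_{x_{\seq u}})} \ddd x_{\seq u} = C_b \norm{S^{\seq v}_{\bD^{\seq v}} f}{L^1(w)},
$$
where the last equality is again Fubini, using that $(S^{\seq v} f)_{x_{\seq u}} = S^{\seq v}(f_{x_{\seq u}})$ since $\Delta^{\seq v}_{I_{\seq v}}$ only touches the $\seq v$-coordinates. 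Since this holds for every choice of product grid $\bD^{\seq v}$, we conclude $b \in \BMOW{\seq v}{w}$ with $\normBMO{b}{\seq v}{w} \leq C_b$. One should note that $f_{x_{\seq u}}$ is nice for a.e.\ $x_{\seq u}$ and that the finiteness hypothesis passes to a.e.\ slice, so that \eqref{eq:uniformBMO} is applicable — this is the routine measure-theoretic bookkeeping.

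\textbf{The ``only if'' direction (main obstacle).} This is the harder direction: from a global bound $|\pair{b, f}| \leq C_b \norm{S^{\seq v}_{\bD^{\seq v}} f}{L^1(w)}$ for all nice $f$ on $\R^d$, we must produce a bound on a.e.\ individual slice, with a \emph{uniform} constant. The standard device is a localisation/concentration argument: given a nice $g$ on $\prod_{i\in\bI_1}\R^{d_i}$ and a small ball (or cube) $B \subset \prod_{i\in\bI_2}\R^{d_i}$, apply the global estimate to $f = g \otimes \varphi_B$ where $\varphi_B$ is a nonnegative bump concentrated on $B$ with $\int \varphi_B = 1$; then $\pair{b,f} = \int_B \pair{b_{x_{\seq u}}, g} \varphi_B(x_{\seq u}) \ddd x_{\seq u}$ while $S^{\seq v} f = (S^{\seq v} g) \otimes \varphi_B$, so $\norm{S^{\seq v} f}{L^1(w)} = \int_B \norm{S^{\seq v} g}{L^1(w_{x_{\seq u}})} \varphi_B(x_{\seq u}) \ddd x_{\seq u}$. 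Dividing by $\int_B \varphi_B$ and letting $B$ shrink to a Lebesgue point $x_{\seq u}^0$ of both $x_{\seq u} \mapsto \pair{b_{x_{\seq u}}, g}$ and $x_{\seq u} \mapsto \norm{S^{\seq v} g}{L^1(w_{x_{\seq u}})}$ — which are in $L^1_{loc}$ — gives $|\pair{b_{x_{\seq u}^0}, g}| \leq C_b \norm{S^{\seq v} g}{L^1(w_{x_{\seq u}^0})}$. The subtlety, and the main thing to argue carefully, is the uniformity of the exceptional null set over all test functions $g$: one fixes a countable dense family of nice $g$'s (dense in an appropriate sense, e.g.\ $g$ with dyadic-step data), obtains a single conull set on which the estimate holds for all of them simultaneously, and then extends to all nice $g$ by density together with the continuity of both sides in $g$ (the right side via Lemma~\ref{lem:squareSim}/\ref{lem:squareLower}-type control and the left side via $L^1_{loc}$ membership of $b$). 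This density-plus-Lebesgue-point scheme is exactly the step where the real work lies; everything else is Fubini.
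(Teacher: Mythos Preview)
Your proof is correct and follows essentially the same route as the paper's: Fubini for the ``if'' direction, and tensoring the test function with a shrinking normalized bump $\varphi_B = |B|^{-1}1_B$ in the $\seq u$-variables followed by Lebesgue differentiation for the ``only if'' direction. Your discussion of the uniformity of the exceptional null set via a countable dense family of test functions is in fact more careful than the paper, which simply asserts the conclusion ``for almost every $x_{\seq u}$ and for every nice function $f$'' without addressing that point.
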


\begin{proof}
First, suppose  that $b_{x_{\seq{u}}} \in \BMO^{\seq{v}}(w_{x_{\seq{u}}})$ uniformly on $x_{\seq{u}} \in \prod_{i \in \bI_2} \R^{d_i}.$
Let $\varphi$ be a nice function defined on $\R^d.$
Hence, we have
\begin{align*}
\sabs{\pair{b, \varphi}} &\leq \int_{\R^{d_{\seq{u}}}} \sabs{\pair{b_{x_{\seq{u}}}, \varphi_{x_{\seq{u}}}}} \dd x_{\seq{u}} \\
&\stackrel{\eqref{eq:uniformBMO}}{\leq} C_b \int_{\R^{d_{\seq{u}}}} \norm{S^{\seq{v}} \varphi_{x_\seq{u}}}{L^1(w_{x_\seq{u}})} \dd x_{\seq{u}} \\
&= C_b \norm{S^{\seq{v}} \varphi}{L^1(w)},
\end{align*}
as desired.

Suppose, conversely, that $b \in \BMOW{\seq{v}}{w}.$
Let $\varphi = \varphi_{1} \otimes \varphi_2$ be a function defined on $\prod_{i \in \bI_1} \R^{d_i} \times \prod_{i \in \bI_2} \R^{d_i}.$  Then we have
\begin{equation}\label{eq:propBMO}
|\pair{\pair{b, \varphi_1}_{\seq{v}}, \varphi_2}| = |\pair{b,\varphi}| \lesssim_{ \normBMO{b}{\seq{v}}{w}} \norm{S^{\seq{v}} \varphi}{L^1(w)} = \int_{\R^{d_{\seq{u}}}} \norm{S^{\seq{v}} \varphi_{1}}{L^1 (w_{y_\seq{u}})} \varphi_2(y_{\seq{u}}) \dd y_{\seq{u}}.
\end{equation}
Let $\varphi_1 = f \colon \prod_{i \in \bI_1} \R^{d_i} \rightarrow \C$ and for fixed $x_{\seq{u}} \in \prod_{i \in \bI_2} \R^{d_i}$  and $r > 0$ let
$$
\varphi_2 (y_\seq{u}) = \frac{1_{B(x_{\seq{u}}, r)} (y_\seq{u})}{|B(x_{\seq{u}}, r)|}.
$$  
By Lebesgue differentiation theorem, the left-hand side of \eqref{eq:propBMO} converges to $|\pair{b_{x_{\seq{u}}}, f}|$  for almost every $x_{\seq{u}}.$

 By same argument, the right-hand side of \eqref{eq:propBMO} converges to $\norm{S^{\seq{v}} f}{L^1(w_{x_{\seq{u}}})}$ for almost every $x_{\seq{u}}.$
Hence, we have
$$
|\pair{b_{x_{\seq{u}}}, f}| \leq \normBMO{b}{\seq{v}}{w} \norm{S^{\seq{v}} f}{L^1(w_{x_{\seq{u}}})}
$$
for almost every $x_{\seq{u}}$ and for every nice function $f$ defined on $\R^{d_\seq{v}}.$ 
\end{proof}

We record the following $\BMO$ embedding result and we use this fact implicitly later on.
\begin{lem}\label{lem:lBMO} Let $\seq{u}$ be a subsequence of $(1,2,\dots, m)$ and $\seq{v}$ be a subsequence of $\seq{u}.$  Suppose $b \in \BMO^{\seq{v}}(w).$ 
Then we have
$$
|\pair{b, f}| \lesssim_{[w]_{A_\infty}}  \normBMO{b}{\seq{v}}{w}\norm{S^{\seq{u}} f}{L^1(w)},
$$
that is, $\BMO^{\seq{v}}(w)  \subset \BMOW{\seq{u}}{w}.$ 
\end{lem}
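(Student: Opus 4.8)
The plan is to reduce the multi-parameter statement to the single one-parameter square-function comparison that is already available through Lemma \ref{lem:squareLower}. Since $\seq{v}$ is a subsequence of $\seq{u}$, write $\seq{u}$ as a permutation $(\seq{v}, \seq{w})$, where $\seq{w}$ collects the parameters of $\seq{u}$ not in $\seq{v}$; this is legitimate because the order of one-parameter martingale differences in the definition of $\Delta^{\seq{u}}$ is arbitrary. Then $\Delta_{I_{\seq{u}}}^{\seq{u}} f = \Delta_{I_{\seq{v}}}^{\seq{v}} \Delta_{I_{\seq{w}}}^{\seq{w}} f$, so one may think of $S^{\seq{u}} f$ as $S^{\seq{v}}$ applied blockwise to the pieces $\Delta_{I_{\seq{w}}}^{\seq{w}} f$.

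First I would reduce, using the tensor structure, to product test functions: it suffices to prove \eqref{eq:propBMO}-type bounds, and by density and multilinearity the general nice $f$ is handled by approximation by finite sums of tensor products $f_1 \otimes f_2$ with $f_1$ on $\R^{d_{\seq{v}}}$ and $f_2$ on the remaining variables — here I would be slightly careful to note that the estimate is linear in $f$ so finite tensor sums suffice, exactly as in the proof of Proposition \ref{prop:uniformBMO}. Second, for such an $f = f_1 \otimes f_2$, apply the hypothesis $b \in \BMO^{\seq{v}}(w)$ — which by Proposition \ref{prop:uniformBMO} (with $\bI_1$ the index set of $\seq{v}$) means $b_{x_{\seq{u}''}} \in \BMO^{\seq{v}}(w_{x_{\seq{u}''}})$ uniformly, where $\seq{u}''$ indexes the complement of $\seq{v}$ in $(1,\dots,m)$ — to bound $\sabs{\pair{b, f}}$ by $\normBMO{b}{\seq{v}}{w}$ times an integral of $\norm{S^{\seq{v}} f_{x_{\seq{u}''}}}{L^1(w_{x_{\seq{u}''}})}$ over the complementary variables, i.e.\ by $\normBMO{b}{\seq{v}}{w} \norm{S^{\seq{v}} f}{L^1(w)}$. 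Third, invoke the second inequality of Lemma \ref{lem:squareLower} (with its roles of $\seq{v}$ and $\seq{u}$ matched appropriately: there the smaller subsequence is dominated by the larger one) to get $\norm{S^{\seq{v}} f}{L^1(w)} \lesssim_{[w]_{A_\infty}} \norm{S^{\seq{u}} f}{L^1(w)}$, which chains the two estimates into the claimed bound.

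The only genuine subtlety is matching Lemma \ref{lem:squareLower} correctly: that lemma is stated for $\seq{v}$ a subsequence of $(1,\dots,m)$ and $\seq{u}$ a subsequence of $\seq{v}$, giving $\norm{S^{\seq{u}} f}{L^1(w)} \lesssim \norm{S^{\seq{v}} f}{L^1(w)}$ — so in the present notation the roles are swapped, and what I actually need is that the \emph{coarser} square function $S^{\seq{v}}$ (fewer parameters) is controlled by the \emph{finer} one $S^{\seq{u}}$, which is precisely the content of that lemma with the names of its $\seq{v}$ and $\seq{u}$ interchanged. This is the main place to be careful, but it is a bookkeeping point rather than a real obstacle; the rest is a direct concatenation of Proposition \ref{prop:uniformBMO} and Lemma \ref{lem:squareLower}, together with the observation that $[w]_{A_\infty}$-dependence is harmless since $w$ is a fixed multi-parameter $A_\infty$ weight on $\R^d$ and its fibers inherit uniform $A_\infty$ constants. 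I do not expect any term that resists this approach.
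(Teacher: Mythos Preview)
Your proposal is correct and uses the same key ingredient as the paper (Lemma \ref{lem:squareLower} to pass from $S^{\seq{v}}$ to $S^{\seq{u}}$), but you take an unnecessary detour in the first step. The inequality $|\pair{b,f}| \leq \normBMO{b}{\seq{v}}{w}\norm{S^{\seq{v}} f}{L^1(w)}$ is \emph{the definition} of $b \in \BMO^{\seq{v}}(w)$: that space is defined exactly by requiring this bound for all nice $f$ on $\R^d$, so there is nothing to prove and no need to reduce to tensor products or invoke Proposition \ref{prop:uniformBMO}. The paper's proof is accordingly two lines: apply the definition, then apply Lemma \ref{lem:squareLower}. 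Your tensor-product reduction and the fiberwise argument via Proposition \ref{prop:uniformBMO} recover the same inequality but are redundant machinery here.
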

\begin{proof}
The claim follows from the definition and Lemma \ref{lem:squareLower}, namely 
\begin{align*}
|\pair{b, f}| &\leq \normBMO{b}{\seq{v}}{w} \norm{S^{\seq{v}} f}{L^1{(w)}} \\
& \lesssim_{[w]_{A_\infty}} \normBMO{b}{\seq{v}}{w}\norm{S^{\seq{u}} f}{L^1(w)}.
\end{align*}
\end{proof}

\subsection{Singular integral operators}
We define multi-parameter SIOs. For brevity, we give an explicit definition only in bi-parameter. A general $m$-parameter definition can be found in Journ\'e \cite{Journe1985}, but in a different operator-valued language.
Our definition is as in \cite{Martikainen2012}, and is, in fact, equivalent to that given by Journ\'e as proved by Grau de la Herrán \cite{Grau2016}. An $m$-parameter definition using our partial kernel/full kernel language
is explicitly given in Ou \cite{Ou2017}.

Let $\alpha \in (0,1].$ We say that $T$ is a bi-parameter singular integral operator (SIO) if the kernel representations below are satisfied.

Furhermore, if, in addition to kernel representations, $T$ satisfies also some certain boundedness and cancellation assumptions, $T1$ assumptions, we say that $T$ is a Calderón-Zygmund operator (CZO). These boundedness and cancellation assumptions are equivalent with  $L^2$-boundedness of $T$ and its partial adjoint defined below.

For Calderón-Zygmund operators, we have the representation theorems \cite{Martikainen2012,Ou2017} using the dyadic model operators, namely paraproducts and shifts. The definitions of these model operators are presented later. We say that CZO $T$ is a paraproduct free Calderón-Zygmund operator if it can be represented using only the dyadic shifts.

\subsubsection{Full kernel representation}
If $f = f_1 \otimes f_2$ and $g = g_1 \otimes g_2$ with
$f_1, g_1 \colon\R^{d_1} \to \C,$ $f_2, g_2 \colon\R^{d_2} \to \C,$ $\spt f_1 \cap \spt g_1 = \emptyset$ and $\spt f_2 \cap \spt g_2 = \emptyset,$ then we have the
kernel representation
$$
\pair{Tf, g} = \int_{\R^{d_1 + d_2}} \int_{\R^{d_1 + d_2}}  K(x, y) f(y) g(x) \dd x \dd y.
$$
The so-called full kernel 
$$
K\colon (\R^{d_1 + d_2}\times \R^{d_1 +d_2}) \setminus \ksulk{(x,y) \in\R^{d_1 + d_2}\times \R^{d_1 +d_2}\colon x_1 = y_1 \text{ or } x_2 = y_2} \to \C 
$$
is assumed to satisfy the size condition
$$
|K(x,y)| \leq C \frac{1}{|x_1 - y_1|^{d_1}}\frac{1}{|x_2 - y_2|^{d_2}},
$$
 the Hölder condition
\begin{align*}
|K(x,y) - K(x, (y_1, y_2')) - K(x,(y_1',y_2)) -K(x,y')| \leq C \frac{|y_1 - y_1'|^\alpha}{|x_1 - y_1|^{d_1 + \alpha}}\frac{|y_2 - y_2'|^\alpha}{|x_2 - y_2|^{d_2 + \alpha}}
\end{align*}
whenever $|y_1 - y_1'| \leq |x_1 - y_1|/2$ and $|y_2 - y_2'| \leq |x_2 - y_2|/2,$
and the mixed Hölder and size condition
$$
|K(x,y) - K(x, (y_1', y_2))| \leq C \frac{|y_1 - y_1'|^\alpha}{|x_1 - y_1|^{d_1 + \alpha}}\frac{1}{|x_2 - y_2|^{d_2}}
$$
whenever $|y_1 - y_1'| \leq |x_1 - y_1|/2.$

Notice that this implies the kernel representation for $T^*, T^{1*}$ and $ T^{2*} = (T^{1*})^*,$ where $T^*$ is the usual adjoint and $T^{1*}$ is the partial adjoint defined by
$$
\pair{T^{1*}(f_1 \otimes f_2), g_1 \otimes g_2} = \pair{T(g_1 \otimes f_2), f_1 \otimes g_2}.
$$
Say that $K^*,K^{1*}$ and $ K^{2*}$ are the respective kernels of these, then we can write
 \begin{align*}
K^*((x_1,x_2),(y_1,y_2)) &= K((y_1,y_2),(x_1,x_2)) \\
 K^{1*}((x_1,x_2),(y_1,y_2))&= K((y_1,x_2), (x_1,y_2)) \\
  K^{2*}((x_1,x_2),(y_1,y_2))&= K((x_1,y_2), (y_1,x_2)).
 \end{align*}
We assume above size and Hölder conditions also  for $K^*,K^{1*}$ and $K^{2*}.$

\subsubsection{Partial kernel representation}
If $f = f_1 \otimes f_2$ and $g = g_1 \otimes g_2$ with $\spt f_1 \cap \spt g_1 = \emptyset,$ then we assume the kernel representation 
$$
\pair{Tf, g} = \int_{\R^{d_1}} \int_{\R^{d_1}} K_{f_2,g_2}(x_1,y_1) f_1(y_1) g_1(x_1)\dd x_1 \dd y_1.
$$
The kernel 
$$
K_{f_2,g_2}\colon \ksulk{(x_1,y_1) \in\R^{d_1}\times \R^{d_1}\colon x_1 \neq y_1} \to \C 
$$
is assumed to satisfy the size condition
$$
|K_{f_2,g_2}(x_1,y_1)| \leq C(f_2,g_2) \frac{1}{|x_1 - y_1|^{d_1}}
$$
and the Hölder conditions

$$
|K_{f_2,g_2}(x_1,y_1) - K_{f_2,g_2}(x_1',y_1)| \leq C(f_2,g_2) \frac{|x_1 - x_1'|^{\alpha}}{|x_1 - y_1|^{d_1 + \alpha}}
$$
whenever $|x_1 - x_1'| \leq |x_1 - y_1|/2$ and
$$
|K_{f_2,g_2}(x_1,y_1) - K_{f_2,g_2}(x_1,y_1')| \leq C(f_2,g_2) \frac{|y_1 - y_1'|^{\alpha}}{|x_1 - y_1|^{d_1 + \alpha}}
$$
whenever $|y_1 - y_1'| \leq |x_1 - y_1|/2.$ 
We require the following control on the constant $C(f_2,g_2).$ For every cube $I_2 \subset \R^{d_2}$ we assume that $C(1_{I_2}, 1_{I_2}) + C(\varphi_{I_2}, 1_{I_2}) + C(1_{I_2}, \varphi_{I_2}) \lesssim |I_2|,$ where $\varphi_{I_2}$ is supported on $I_2,$ $\int \varphi_{I_2} = 0$ and $|\varphi_{I_2}| \le 1$.

Analogously, we assume similar presentation and properties with $K_{f_1,g_1}$ whenever $\spt f_2 \cap \spt g_2 = \emptyset.$

\section{Paraproduct operators and martingale difference expansions of products}\label{sec:expansion}
We assume that operators in this section are defined in some fixed dyadic grids $\bD^{\seq{d}} = \prod_{i = 1}^m \bD^{d_i}.$

Define the one-parameter paraproduct operators 
$$
A_1^i (b,f) = \sum_{I_i \in \bD^{d_i}} \Delta_{I_i}^i b \Delta_{I_i}^i f, \quad A_2^i (b,f) = \sum_{I_i \in \bD^{d_i}} \Delta_{I_i}^i b E_{I_i}^i f, \quad A_3^i(b,f) =\sum_{I_i \in \bD^{d_i}}  E_{I_i}^i b \Delta_{I_i}^i f,
$$
where $E_{I_i} \varphi := \pair{\varphi}_{I_i,i} 1_{I_i}.$ We call the last term the ``illegal'' paraproduct.

Then we define the multi-parameter paraproduct operators as iterated one\hyp{}parameter paraproducts 
 -- e.g. for $\seq{v} = (v_i)_{i = 1}^n, n\leq m$ and $\seq{i}  = (i_1,i_2,\dots, i_n)$ with $i_1 =2 $ and $i_j  = 3$ for all $j \neq 1$ we have
\begin{align*}
A_{\seq{i}}^\seq{v} (b,f) &= A_{i_1}^{v_1} A_{i_2}^{v_2} \dots A_{i_n}^{v_n} (b,f) = \sum_{I_{v_1}} A_{i_2}^{v_2} \dots A_{i_n}^{v_n} (\Delta_{I_{v_1}}^{v_1} b, E_{I_{v_1}}^{v_1}f)  \\
&=\sum_{I_{v_1}, I_{v_2}} A_{i_3}^{v_3} \dots A_{i_n}^{v_n} (\Delta_{I_{v_1}}^{v_1} E_{I_{v_2}}^{v_2}b,  E_{I_{v_1}}^{v_1} \Delta_{I_{v_2}}^{v_2}f) \\
&= \dots = \sum_{I_{v_1}, I_{v_2}, \dots, I_{v_n}} \Delta_{I_{v_1}}^{v_1} E_{I_{\seq{v}'}}^{\seq{v}'} b E_{I_{v_1}}^{v_1} \Delta_{ I_{\seq{v}'}}^{\seq{v}'} f,
\end{align*}
where $\seq{v}' = (v_i)_{i \in \ksulk{2,3,\dots,n}}.$

We write
$$
bf  = \sum_{i = 1}^3 A_{i}^j (b,f),
$$
and we say that this is  the one-parameter expansion of the product $bf$ in the parameter $j.$
Then the multi-parameter expansion is obtained by iterating the previous one-parameter expansion -- e.g. let $\seq{v} = (v_i)_{i =1}^n$ be a subsequence of $\seqm.$ Then expansion in the parameters $\seq{v}$ is  
$$
bf = \sum_{\seq{i} \in \ksulku{1,2,3}^{n}} A_{\seq{i}}^{\seq{v}}(b,f) = \sum_{i_1,i_2,\dots,i_n = 1}^3 A_{i_1}^{v_1} A_{i_2}^{v_2} \dots A_{i_n}^{v_n} (b,f),
$$
where the ``illegal'' paraproduct is  the one with $\seq{i} = \ksulku{3}^n.$ We want to emphasize the paraproducts are directly bounded with some BMO assumption if $\seq{i} \neq \ksulku{3}^n,$ as we are going to next show, hence the name ``illegal''.

\begin{lem}\label{lem:paraBound}Let $\bJ$ be a subset of $\colM$ with  $n \leq m$ elements. Let  $\seq{v} = (v_j)_{v_j \in \bJ},$  $\seq{i} = (i_1,i_2, \dots, i_n) \in \ksulk{1,2,3}^{n} \setminus\ksulku{3}^n$ and  $\seq{u} = (u_j),$ $u_j \in  \{k \in \bJ: i_k \neq 3\}.$ 
 Also let $\nu = \mu^\pinv{p} \lambda^{-\pinv{p}},$ where $\mu, \lambda \in A_p(\R^{d_1} \times \dots \times \R^{d_m})$ and $1<p<\infty.$ Then
$$
\norm{A_{\seq{i}}^{\seq{v}}(b, \cdot)}{L^p{(\mu)} \rightarrow L^p(\lambda)} \lesssim_{[\mu]_{A_p}[\lambda]_{A_p}} \normBMO{b}{\seq{u}}{\nu}.
$$
\end{lem}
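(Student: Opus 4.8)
The plan is to reduce the multi-parameter paraproduct bound to a single one-parameter (weighted) paraproduct estimate via the iterated structure of $A_{\seq{i}}^{\seq{v}}$, combined with the square function / Fefferman--Stein machinery recorded in Section~\ref{sec:defs}. Since $\seq{i} \neq \ksulku{3}^n$, there is at least one index $k$ with $i_k \neq 3$; after permuting the one-parameter factors (which is legitimate because martingale differences and expectations in distinct parameters commute), I may assume $i_1 \neq 3$, so the outermost factor $A_{i_1}^{v_1}$ is one of the two ``legal'' one-parameter paraproducts $A_1^{v_1}(\cdot,\cdot)$ or $A_2^{v_1}(\cdot,\cdot)$. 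The remaining factors $A_{i_2}^{v_2}\cdots A_{i_n}^{v_n}$ I treat as a single ``vector-valued'' operator acting on the inner variables, and I peel off parameters one at a time.

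Concretely, I would first establish the \emph{base case}: a one-parameter Bloom estimate
$$
\Bnorm{\Bsulk{\sum_{I} \babs{A_j^{i}(b, g_I)}^2}^{1/2}}{L^p(\lambda)} \lesssim_{[\mu]_{A_p},[\lambda]_{A_p}} \normBMO{b}{(i)}{\nu}\,\Bnorm{\Bsulk{\sum_I |g_I|^2}^{1/2}}{L^p(\mu)}, \qquad j \in \{1,2\},
$$
i.e.\ the vector-valued extension of the classical one-parameter paraproduct bound in the Bloom setting. The scalar version is standard (it is the heart of the one-parameter Bloom proofs of Holmes--Lacey--Wick and Lerner--Ombrosi--Rivera-R\'ios); the vector-valued upgrade follows by the $A_\infty$-extrapolation Lemma~\ref{lem:extrapol} applied to the pairs built from the paraproduct, exactly as in the proof of Lemma~\ref{lem:squareLower}. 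Here one uses that $\nu = \mu^{1/p}\lambda^{-1/p} \in A_2$ and the pointwise control $|h_{I_i}h_{I_i}| = 1_{I_i}/|I_i|$, $|\Delta_{I}^{i}b|\le \langle |b|\rangle$-type bounds, together with the weighted square-function equivalences of Lemma~\ref{lem:squareSim}.

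The \emph{inductive step} then removes the parameters $v_2, \ldots, v_n$ one by one. Writing $A_{\seq{i}}^{\seq{v}}(b,f) = A_{i_1}^{v_1}\big(b, A_{i_2}^{v_2}\cdots A_{i_n}^{v_n}(b,f)\big)$ misstates it slightly — the function $b$ appears in all factors — so more precisely I expand $A_{\seq{i}}^{\seq{v}}(b,f) = \sum_{I_{v_1}} \Delta_{I_{v_1}}^{v_1}\big(\cdots\big)\,(\text{or }E_{I_{v_1}}^{v_1})$ and apply the base case in parameter $v_1$ with $g_{I_{v_1}}$ equal to the partial expansion in the remaining parameters, landing on $\normBMO{b}{(v_1)}{\nu}$ times $\|S^{v_1}(\text{inner})\|$. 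Iterating and using Lemma~\ref{lem:squareSim} / Lemma~\ref{lem:FSgen} to collapse the square functions, and Lemma~\ref{lem:lBMO} (the embedding $\BMO^{(v_1)}(\nu) \supset \BMOW{\seq{u}}{\nu}$, i.e.\ one gets away with testing on the smaller index set $\seq{u} = \{k : i_k \neq 3\}$) to upgrade the mean-oscillation constants to $\normBMO{b}{\seq{u}}{\nu}$, one arrives at the claimed inequality. The key point making $\seq{u}$ (rather than all of $\seq{v}$) the right BMO index is that in parameters $k$ with $i_k = 3$ the factor $A_3^{v_k}$ only ever applies an \emph{average} $E_{I_{v_k}}^{v_k}$ to $b$, never a martingale difference, so no oscillation of $b$ in those directions is needed.

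The main obstacle I anticipate is bookkeeping the correct weight at each stage of the iteration: one must track that passing a one-parameter paraproduct in direction $v_j$ converts an $L^p(\mu)$ norm of the inner object into an $L^p(\lambda)$ norm while picking up exactly one factor $\normBMO{b}{(v_j)}{\nu}$ (and \emph{not} a product of two weighted norms), which is the content of the single-weight-input, single-weight-output Bloom base case rather than the easier $b\in\BMO\cap\BMO(\nu)$ split. Making this compatible with the vector-valued square-function reductions — so that the weights $\mu,\lambda,\nu$ are never mismatched when applying Lemmas~\ref{lem:extrapol}, \ref{lem:squareSim}, \ref{lem:FS} — is the delicate part; once the base case is correctly formulated as above, the induction itself is routine.
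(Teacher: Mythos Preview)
Your inductive one-parameter-at-a-time strategy has a genuine gap, and it is not merely bookkeeping. Two issues combine to make the sketch unworkable as stated.

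First, the factorisation you need does not exist. The object $A_{\seq{i}}^{\seq{v}}(b,f)$ is \emph{not} of the form $A_{i_1}^{v_1}(b,g)$ for a fixed $b$ and some inner $g$; by definition (see the display immediately preceding the lemma) the outer $A_{i_1}^{v_1}$ acts on the \emph{pair}, so that after one step one has $\sum_{I_{v_1}} A_{\seq{i}'}^{\seq{v}'}(\Delta_{I_{v_1}}^{v_1} b,\, E_{I_{v_1}}^{v_1} f)$ (for $i_1=2$, say). Thus the symbol fed to the inner paraproduct is $\Delta_{I_{v_1}}^{v_1} b$, which varies with $I_{v_1}$; your ``base case'' with a single fixed $b$ does not apply. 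You acknowledge this (``misstates it slightly''), but it is not slight: the whole point of the multi-parameter product-BMO paraproduct is that $b$ is simultaneously martingale-differenced in all of the legal directions $\seq{u}$, and this joint cancellation cannot be captured by iterating one-parameter paraproducts with a frozen $b$.

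Second, and more decisively, your use of Lemma~\ref{lem:lBMO} is in the wrong direction. That lemma says $\BMO^{(v_1)}(\nu) \subset \BMO^{\seq{u}}(\nu)$ (fewer parameters gives the \emph{smaller} space and the \emph{larger} norm), i.e.\ $\normBMO{b}{\seq{u}}{\nu} \lesssim \normBMO{b}{(v_1)}{\nu}$. So even if a one-parameter Bloom step produced $\|A_{\seq{i}}^{\seq{v}}(b,f)\|_{L^p(\lambda)} \lesssim \normBMO{b}{(v_1)}{\nu}\|f\|_{L^p(\mu)}$, you could not ``upgrade'' to $\normBMO{b}{\seq{u}}{\nu}$: that is a \emph{smaller} constant, and the desired inequality is strictly stronger. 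Relatedly, the Bloom weight shift $L^p(\mu)\to L^p(\lambda)$ can occur only once, so there is nothing left to ``iterate'' after the first step; the remaining legal directions in $\seq{u}$ must all be used simultaneously.

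The paper's proof resolves both issues by avoiding induction entirely. One passes to the dual form $\pair{A_{\seq{i}}^{\seq{v}}(b,f),g}$, freezes the variables in the $A_3$-directions $\seq{v}^3$, and rewrites the remaining expression as $\pair{b_{x_{\seq{v}^3}},\Phi(f,g)}$ for a bilinear $\Phi$. One then invokes the $\BMO^{\seq{u}}$ condition \emph{once}, in all legal parameters at once, via Proposition~\ref{prop:uniformBMO}: $|\pair{b_{x_{\seq{v}^3}},\Phi}| \le \normBMO{b}{\seq{u}}{\nu}\,\|S^{\seq{u}}\Phi\|_{L^1(\nu_{x_{\seq{v}^3}})}$. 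The square function $S^{\seq{u}}\Phi$ is then bounded pointwise by a product of two square-maximal expressions (one in $f$, one in $g$), and a single application of H\"older with $\nu=\mu^{1/p}\lambda^{-1/p}$, followed by Lemma~\ref{lem:FSgen}, finishes. The essential difference from your sketch is that the BMO duality is applied in the full set of parameters $\seq{u}$ in one shot, which is what produces the correct (smallest) BMO norm.
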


\begin{proof}
Since we have three different type of one-parameter paraproducts, let $\bI = \{\bI_i\}_{i = 1}^3$ be a partition of $\bJ$ such that $\bI_k = \ksulk{j \in \bJ: i_j = k}.$ Notice we require in the statement that $\bI_1 \cup \bI_2 \neq \emptyset.$ We set $\seq{v}^k = (v^k_j)_{v^k_j \in \bI_k}.$ If some set $\bI_k = \emptyset$ or $\bJ = \colM,$ it is fairly obvious what steps are not necessary and we omit the details. 
By the partition, we are considering the term
\begin{align*}
A^{\seq{v}}_{\seq{i}}(b,f) = \sum_{I_{\seq{v}^1}, I_{\seq{v}^2}, I_{\seq{v}^3}} &\Bpair{b, h_{I_{\seq{v}^1}} \otimes h_{I_{\seq{v}^2}} \otimes \avgfu{I_{\seq{v}^3}}}_{\seq{v}^1,\seq{v}^2,\seq{v}^3} \Bpair{f, h_{I_{\seq{v}^1}} \otimes \avgfu{I_{\seq{v}^2}} \otimes h_{I_{\seq{v}^3}}}_{\seq{v}^1,\seq{v}^2,\seq{v}^3} \\
&\otimes h_{I_{\seq{v}^1}} h_{I_{\seq{v}^1}} \otimes h_{I_{\seq{v}^2}} \otimes h_{I_{\seq{v}^3}}.
\end{align*}

Begin the estimation with the dual form 
\begin{align*}
\Babs{\int_{\R^{d_{\seq{v}'}}}\sum_{I_{\seq{v}^1}, I_{\seq{v}^2}, I_{\seq{v}^3}} &\Bpair{b, h_{I_{\seq{v}^1}} \otimes h_{I_{\seq{v}^2}} \otimes \avgfu{I_{\seq{v}^3}}}_{\seq{v}^1,\seq{v}^2,\seq{v}^3} \Bpair{f, h_{I_{\seq{v}^1}} \otimes \avgfu{I_{\seq{v}^2}} \otimes h_{I_{\seq{v}^3}}}_{\seq{v}^1,\seq{v}^2,\seq{v}^3} \\
&\times\pair{g, h_{I_{\seq{v}^1}} h_{I_{\seq{v}^1}} \otimes h_{I_{\seq{v}^2}} \otimes h_{I_{\seq{v}^3}}}_{\seq{v}^1, \seq{v}^2, \seq{v}^3} },
\end{align*}
where $\seq{v}' = (j)_{j \in \colM \setminus \bJ}.$ Then we fix the variable $x_{\seq{v}'}$ in $\R^{d_{\seq{v}'}}$ and consider the sum inside the integral. For now, in this proof, we do not write $x_{\seq{v}'}$ to the subscript of the functions, i.e. $b$ means $b_{x_{\seq{v}'}}$ and so on.
Thus we are estimating 
\begin{align*}
&\Babs{\sum_{I_{\seq{v}^1}, I_{\seq{v}^2}, I_{\seq{v}^3}} \Bpair{b, h_{I_{\seq{v}^1}} \otimes h_{I_{\seq{v}^2}} \otimes \avgfu{I_{\seq{v}^3}}} \Bpair{f, h_{I_{\seq{v}^1}} \otimes \avgfu{I_{\seq{v}^2}} \otimes h_{I_{\seq{v}^3}}}  \pair{g, h_{I_{\seq{v}^1}} h_{I_{\seq{v}^1}} \otimes h_{I_{\seq{v}^2}} \otimes h_{I_{\seq{v}^3}}}} \\
&= \Babs{\int_{\R^{d_{\seq{v}^3}}} \sum_{I_{\seq{v}^3}} \frac{1_{I_{\seq{v}^3}}(x_{\seq{v}^3})}{|I_{\seq{v}^3}|} \sum_{I_{\seq{v}^1}, I_{\seq{v}^2} } \pair{b_{x_{\seq{v}^3}}, h_{I_{\seq{v}^1}} \otimes h_{I_{\seq{v}^2}}} \Bpair{f, h_{I_{\seq{v}^1}} \otimes \avgfu{I_{\seq{v}^2}} \otimes h_{I_{\seq{v}^3}}} \\
&\hspace{12em}\times \pair{g, h_{I_{\seq{v}^1}} h_{I_{\seq{v}^1}} \otimes h_{I_{\seq{v}^2}} \otimes h_{I_{\seq{v}^3}}} \dd x_{\seq{v}^3}} \\
&= \Babs{\int_{\R^{d_{\seq{v}^3}}} \sum_{I_{\seq{v}^3}} \frac{1_{I_{\seq{v}^3}}(x_{\seq{v}^3})}{|I_{\seq{v}^3}|}  \pair{b_{x_{\seq{v}^3}}, A_{I_{\seq{v}^3}}^{\seq{v}^1, \seq{v}^2}(f,g)} \dd x_{\seq{v}^3}},
\end{align*}
where
$$
A_{I_{\seq{v}^3}}^{\seq{v}^1, \seq{v}^2}(f,g) := \sum_{I_{\seq{v}^1}, I_{\seq{v}^2}} \Bpair{f, h_{I_{\seq{v}^1}} \otimes \avgfu{I_{\seq{v}^2}} \otimes h_{I_{\seq{v}^3}}} \pair{g, h_{I_{\seq{v}^1}} h_{I_{\seq{v}^1}} \otimes h_{I_{\seq{v}^2}} \otimes h_{I_{\seq{v}^3}}} h_{I_{\seq{v}^1}} \otimes h_{I_{\seq{v}^2}}.
$$

By Proposition \ref{prop:uniformBMO} it is enough to show the boundedness of 
\begin{equation}\label{eq:paraprodproof}
\int_{\R^{\seq{v}^3}} \sum_{I_{\seq{v}^3}} \frac{1_{I_{\seq{v}^3}}(x_{\seq{v}^3})}{|I_{\seq{v}^3}|} \norm{ S^{\seq{v}^1, \seq{v}^2}( A^{\seq{v}^1, \seq{v}^2}_{I_{\seq{v}^3}}(f,g))}{L^{1}( \nu_{x_{\seq{v}^3}})} \dd x_{\seq{v}^3}.
\end{equation}
Note that $\nu_{x_{\seq{v}^3}}$ actually is $\nu_{x_{\seq{v}'},{x_{\seq{v}^3}}}.$

First, observe that 
\begin{align*}
&S^{\seq{v}^1, \seq{v}^2}( A_{I_{\seq{v}^3}}^{\seq{v}^1, \seq{v}^2}(f,g)) \\
&=  \Bsulk{\sum_{I_{\seq{v}^1}, I_{\seq{v}^2}}  \Babs{\Bpair{f, h_{I_{\seq{v}^1}} \otimes \avgfu{I_{\seq{v}^2}} \otimes h_{I_{\seq{v}^3}}}}^2 \Babs{\Bpair{g, h_{I_{\seq{v}^1}} h_{I_{\seq{v}^1}} \otimes h_{I_{\seq{v}^2}} \otimes h_{I_{\seq{v}^3}}}}^2 \avgfu{I_{\seq{v}^1}} \otimes \avgfu{I_{\seq{v}^2}}}^\pinv{2} \\
&\leq \Bsulk{\sum_{I_{\seq{v}^1}} \bbrac{M^{\seq{v}^2}\pair{f, h_{I_{\seq{v}^1}} \otimes h_{I_{\seq{v}^3}}}_{\seq{v}^1, \seq{v}^3}}^2 \otimes  \avgfu{I_{\seq{v}^1}} }^\pinv{2} \\
&\qquad \times \Bsulk{\sum_{I_{\seq{v}^2}}   \bbrac{M^{\seq{v}^1}\pair{g, h_{I_{\seq{v}^2}} \otimes h_{I_{\seq{v}^3}}}_{\seq{v}^2,\seq{v}^3}}^2  \otimes \avgfu{I_{\seq{v}^2}}}^\pinv{2}.
\end{align*}
Using the previous inequality we get
\begin{align*}
&\sum_{I_{\seq{v}^3}} \frac{1_{I_{\seq{v}^3}}(x_{\seq{v}^3})}{|I_{\seq{v}^3}|} \norm{ S^{\seq{v}^1, \seq{v}^2}( A_{I_{\seq{v}^3}}^{\seq{v}^1, \seq{v}^2}(f,g))}{L^{1}( \nu_{x_{\seq{v}^3}})} \\
&\leq \int_{\R^{d_{\seq{v}^1} + d_{\seq{v}^2}}}\sum_{I_{\seq{v}^3}} \frac{1_{I_{\seq{v}^3}}(x_{\seq{v}^3})}{|I_{\seq{v}^3}|}  \Bsulk{\sum_{I_{\seq{v}^1}} \bbrac{M^{\seq{v}^2}\pair{f, h_{I_{\seq{v}^1}} \otimes h_{I_{\seq{v}^3}}}_{\seq{v}^1, \seq{v}^3}}^2 \otimes  \avgfu{I_{\seq{v}^1}} }^\pinv{2} \\
&\hspace{6em} \times \Bsulk{\sum_{I_{\seq{v}^2}}   \bbrac{M^{\seq{v}^1}\pair{g, h_{I_{\seq{v}^2}} \otimes h_{I_{\seq{v}^3}}}_{\seq{v}^2,\seq{v}^3}}^2  \otimes \avgfu{I_{\seq{v}^2}}}^\pinv{2} \nu_{x_{\seq{v}^3}} \\
&\leq \int_{\R^{d_{\seq{v}^1} + d_{\seq{v}^2}}}  \Bsulk{\sum_{I_{\seq{v}^1}, I_{\seq{v}^3}} \bbrac{M^{\seq{v}^2}\pair{f, h_{I_{\seq{v}^1}} \otimes h_{I_{\seq{v}^3}}}_{\seq{v}^1, \seq{v}^3}}^2 \otimes  \avgfu{I_{\seq{v}^1}} \otimes  \frac{1_{I_{\seq{v}^3}}(x_{\seq{v}^3})}{|I_{\seq{v}^3}|} }^\pinv{2} \\
&\hspace{5em} \times \Bsulk{\sum_{I_{\seq{v}^2}, I_{\seq{v}^3}}   \bbrac{M^{\seq{v}^1}\pair{g, h_{I_{\seq{v}^2}} \otimes h_{I_{\seq{v}^3}}}_{\seq{v}^2,\seq{v}^3}}^2  \otimes \avgfu{I_{\seq{v}^2}} \otimes  \frac{1_{I_{\seq{v}^3}}(x_{\seq{v}^3})}{|I_{\seq{v}^3}|}}^\pinv{2} \nu_{x_{\seq{v}^3}} \\
&\leq \Bnorm{\Bsulk{\sum_{I_{\seq{v}^1}, I_{\seq{v}^3}} \bbrac{M^{\seq{v}^2}\pair{f, h_{I_{\seq{v}^1}} \otimes h_{I_{\seq{v}^3}}}_{\seq{v}^1, \seq{v}^3}}^2 \otimes  \avgfu{I_{\seq{v}^1}} \otimes  \frac{1_{I_{\seq{v}^3}}(x_{\seq{v}^3})}{|I_{\seq{v}^3}|} }^\pinv{2}}{L^{p}(\mu_{x_{\seq{v}^3}})} \\
&\qquad\times\Bnorm{\Bsulk{\sum_{I_{\seq{v}^2}, I_{\seq{v}^3}}   \bbrac{M^{\seq{v}^1}\pair{g, h_{I_{\seq{v}^2}} \otimes h_{I_{\seq{v}^3}}}_{\seq{v}^2,\seq{v}^3}}^2  \otimes \avgfu{I_{\seq{v}^2}} \otimes  \frac{1_{I_{\seq{v}^3}}(x_{\seq{v}^3})}{|I_{\seq{v}^3}|}}^\pinv{2}}{L^{p'}(\lambda^{1 - p'}_{x_{\seq{v}^3}})}.
\end{align*}
Putting this estimate back to \eqref{eq:paraprodproof} and applying Hölder's inequality we get
\begin{align*}
\eqref{eq:paraprodproof} &\leq \Bnorm{\Bsulk{\sum_{I_{\seq{v}^1}, I_{\seq{v}^3}} \bbrac{M^{\seq{v}^2}\pair{f, h_{I_{\seq{v}^1}} \otimes h_{I_{\seq{v}^3}}}_{\seq{v}^1, \seq{v}^3}}^2 \otimes  \avgfu{I_{\seq{v}^1}} \otimes  \avgfu{I_{\seq{v}^3}} }^\pinv{2}}{L^{p}(\mu)} \\
&\qquad\times\Bnorm{\Bsulk{\sum_{I_{\seq{v}^2}, I_{\seq{v}^3}}   \bbrac{M^{\seq{v}^1}\pair{g, h_{I_{\seq{v}^2}} \otimes h_{I_{\seq{v}^3}}}_{\seq{v}^2,\seq{v}^3}}^2  \otimes \avgfu{I_{\seq{v}^2}} \otimes  \avgfu{I_{\seq{v}^3}}}^\pinv{2}}{L^{p'}(\lambda^{1 - p'})} \\
&\lesssim \norm{f}{L^{p}(\mu)} \norm{g}{L^{p'}(\lambda^{1 - p'})},
\end{align*}
where in the last step we apply Lemma \ref{lem:FSgen}.

Lastly, recall that we fixed $x_{\seq{v}'} \in \R^{d_{\seq{v}'}}$ and the previous bound actually is
$$
 \norm{f_{x_{\seq{v}'}}}{L^{p}(\mu_{x_{\seq{v}'}})} \norm{g_{x_{\seq{v}'}}}{L^{p'}(\lambda_{{x_{\seq{v}'}}}^{1 - p'})}.
$$
However, by applying the Hölder's inequality once more to the integral over $\R^{d_{\seq{v}'}}$ we get
$$
\sabs{\pair{A^{\seq{v}}_{\seq{i}}(b,f), g}} \lesssim \normBMO{b}{\seq{u}}{\nu} \norm{f}{L^{p}(\mu)} \norm{g}{L^{p'}(\lambda^{1 - p'})},
$$
where $\seq{u} = (j)_{j \in \bI_1 \cup \bI_2}.$
\end{proof}
\section{Paraproduct free commutators}\label{sec:shift}
We assume that operators in this section are defined in some fixed dyadic grids $\bD^{\seq{d}} = \prod_{i = 1}^m \bD^{d_i}.$

 Let $\seq{v}$ be a subsequence of $\seqm.$ Define the multi-parameter shift 
$$
\bS^{\seq{v}} f = \bS^{\seq{v}, \seq{k}, \seq{l}}_{\bD^{d_{\seq{v}}}} f
= \sum_{K_\seq{v} \in \bD^{d_\seq{v}}} \sum_{ \substack{I_{\seq{v}} \in  \bD^{d_\seq{v}} \\I_{\seq{v}}^{(\sseq{k}{v})} = K_\seq{v}}} \sum_{\substack{J_\seq{v} \in  \bD^{d_\seq{v}} \\ J_{\seq{v}}^{(\sseq{l}{v})} = K_\seq{v}}} a_{K_\seq{v}, I_{\seq{v}}, J_\seq{v}} \pair{f, h_{I_{\seq{v}}}}_{\seq{v}}\otimes h_{J_{\seq{v}}} .
$$
Here $k_{v_i}, l_{v_i} \geq 0$  and only finitely many of 
the coefficients $a_{K_\seq{v}, I_{\seq{v}}, J_\seq{v}}$ are non-zero and 
$$
|a_{K_\seq{v}, I_{\seq{v}}, J_\seq{v}}| \leq\frac{|I_{\seq{v}}|^\pinv{2}|J_{\seq{v}}|^\pinv{2}}{|K_{\seq{v}}|}.
$$

First, we record here a standard equality as a lemma, since the notation in the multi-parameter setting needs some explaining.
\begin{lem}\label{lem:diffofavgs}
	Let $\varphi$ be a locally integrable function defined on $\R^{d_{\seq{v}}},$ where $\seq{v} = (v_i)_{i = 1}^n$ is a subsequence of $\seqm,$ and let $k_{v_i},l_{v_i}$ be non-negative integers for $i = 1,2,\dots n.$ Also let $I_\seq{v}, J_\seq{v}, K_\seq{v} \in \bD^{d_{\seq{v}}}$ such that $I_{v_i}^{(k_{v_i})} = K_{v_i} = J_{v_i}^{(l_{v_i})}$ for all $i = 1,2,\dots,n.$
	There holds
	\begin{equation}\label{eq:diffExpansion}
	\pair{\varphi}_{J_\seq{v}} - \pair{\varphi}_{I_{\seq{v}}} = \sum_{i = 1}^n \Bsulk{\sum_{t = 1}^{l_{v_i}} \pair{\Delta_{J_{v_i}^{(t)}}^{v_i} \varphi}_{J_{v_i} \times Q_{\seq{v}'_i}} - \sum_{s = 1}^{k_{v_i}} \pair{\Delta_{I_{v_i}^{(s)}}^{v_i} \varphi}_{I_{v_i} \times Q_{\seq{v}'_i}}},
	\end{equation}
	where $\seq{v}'_i = (v_j)_{j \in \{1,2,\dots, n \} \setminus i}, $  and
	 \begin{equation*}
	 Q_{\seq{v}'_i} = 
	 \begin{cases}
	 I_{v_1} \times \dots \times I_{v_{i-1}}  \times J_{v_{i+1}} \times \dots \times J_{v_n},  &\text{ if } 1 < i < n \\
	 I_{v_1} \times \dots \times I_{v_{n-1}},  &\text{ if }  i=  n \\
	 J_{v_{2}} \times \dots \times J_{v_n}, &\text{ if } i = 1.
	 \end{cases}
	 \end{equation*}
\end{lem}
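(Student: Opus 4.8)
The plan is to prove the identity \eqref{eq:diffExpansion} by reducing the multi-parameter statement to repeated application of the one-parameter case, one parameter at a time, via a telescoping argument. First I would establish the one-parameter version: for a locally integrable $\varphi$ on $\R^{d_{v_i}}$ and dyadic cubes $I_{v_i}^{(k_{v_i})} = K_{v_i} = J_{v_i}^{(l_{v_i})}$, we have
$$
\pair{\varphi}_{J_{v_i}} - \pair{\varphi}_{I_{v_i}} = \sum_{t=1}^{l_{v_i}} \pair{\Delta_{J_{v_i}^{(t)}}^{v_i}\varphi}_{J_{v_i}} - \sum_{s=1}^{k_{v_i}}\pair{\Delta_{I_{v_i}^{(s)}}^{v_i}\varphi}_{I_{v_i}}.
$$
This is just the standard martingale telescoping: writing $E_{L}^{v_i}\varphi = \pair{\varphi}_{L,v_i}1_L$, one has $E_{J_{v_i}}^{v_i}\varphi - E_{K_{v_i}}^{v_i}\varphi = \sum_{t=1}^{l_{v_i}} \Delta_{J_{v_i}^{(t)}}^{v_i}\varphi$ restricted to $J_{v_i}$ (and symmetrically for $I_{v_i}$), and then $\pair{\varphi}_{J_{v_i}} - \pair{\varphi}_{I_{v_i}} = (\pair{\varphi}_{J_{v_i}} - \pair{\varphi}_{K_{v_i}}) - (\pair{\varphi}_{I_{v_i}} - \pair{\varphi}_{K_{v_i}})$ since both $I_{v_i}, J_{v_i}$ sit inside the common ancestor $K_{v_i}$; evaluating the averages gives the claim. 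Care is needed only with the precise meaning of $\pair{\Delta_{J_{v_i}^{(t)}}^{v_i}\varphi}_{J_{v_i}}$, but since $\Delta_{J_{v_i}^{(t)}}^{v_i}\varphi$ is constant on each child of $J_{v_i}^{(t)}$ and $J_{v_i}$ lies in one such child, this average is well-defined.

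Next I would iterate over the $n$ parameters of $\seq{v}$. Write $\psi_0 = \varphi$ and, for $j = 1, \dots, n$, introduce a partially-averaged function: the idea is that going from $\pair{\varphi}_{I_{\seq{v}}}$ to $\pair{\varphi}_{J_{\seq{v}}}$ can be done by switching the averaging cube in the first parameter from $I_{v_1}$ to $J_{v_1}$ while keeping the rest at $I_{v_2}, \dots, I_{v_n}$, then switching the second parameter from $I_{v_2}$ to $J_{v_2}$ while parameter $1$ is already at $J_{v_1}$ and parameters $3, \dots, n$ are still at $I_{v_j}$, and so on. Concretely,
$$
\pair{\varphi}_{J_{\seq{v}}} - \pair{\varphi}_{I_{\seq{v}}} = \sum_{i=1}^{n}\Bsulk{\pair{\varphi}_{J_{v_1}\times\cdots\times J_{v_i}\times I_{v_{i+1}}\times\cdots\times I_{v_n}} - \pair{\varphi}_{J_{v_1}\times\cdots\times J_{v_{i-1}}\times I_{v_i}\times\cdots\times I_{v_n}}},
$$
which is an exact telescoping sum. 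Each summand differs only in the $v_i$-th coordinate: applying the one-parameter identity in the variable $v_i$ to the function $y_{v_i}\mapsto \pair{\varphi}_{Q_{\seq{v}'_i}}(y_{v_i})$ obtained by averaging $\varphi$ over all the \emph{other} coordinates over the rectangle $Q_{\seq{v}'_i}$ (which is exactly $J_{v_1}\times\cdots\times J_{v_{i-1}}\times I_{v_{i+1}}\times\cdots\times I_{v_n}$, matching the three cases in the statement), converts the $i$-th telescoping term into $\sum_{t=1}^{l_{v_i}}\pair{\Delta_{J_{v_i}^{(t)}}^{v_i}\varphi}_{J_{v_i}\times Q_{\seq{v}'_i}} - \sum_{s=1}^{k_{v_i}}\pair{\Delta_{I_{v_i}^{(s)}}^{v_i}\varphi}_{I_{v_i}\times Q_{\seq{v}'_i}}$, using that the one-parameter martingale difference $\Delta_{\cdot}^{v_i}$ commutes with averaging in the remaining variables (Fubini). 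Summing over $i$ yields \eqref{eq:diffExpansion}.

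The main obstacle here is purely bookkeeping: one has to be scrupulous that the one-parameter martingale difference applied inside an iterated average commutes with the other averages — i.e., that $\pair{\Delta_{J_{v_i}^{(t)}}^{v_i}\varphi}_{J_{v_i}\times Q_{\seq{v}'_i}}$ is unambiguous and equals the result of first taking the $v_i$-difference and then averaging over $Q_{\seq{v}'_i}$ in the remaining coordinates — and that the mixed rectangles $Q_{\seq{v}'_i}$ are precisely the ones listed in the three cases (all coordinates $< i$ at the $J$-cube, all coordinates $> i$ at the $I$-cube, with the boundary cases $i=1$ and $i=n$ handled by the obvious degeneration). There is no analytic content beyond the one-parameter telescoping identity; everything else is Fubini and careful index management, so I would present the one-parameter lemma cleanly, state the telescoping decomposition above, and then invoke the one-parameter identity term by term.
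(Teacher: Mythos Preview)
Your approach is correct and essentially the same as the paper's: both reduce to the one-parameter telescoping identity applied one coordinate at a time. The paper phrases this as an induction on $n$ (do the first parameter by the one-parameter case, then the remaining $n-1$ by the inductive hypothesis), while you write the full telescoping sum directly; these are equivalent.

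However, your telescoping runs in the wrong direction to match the stated $Q_{\seq{v}'_i}$. Your decomposition
$$
\pair{\varphi}_{J_{\seq{v}}} - \pair{\varphi}_{I_{\seq{v}}} = \sum_{i=1}^{n}\Bsulk{\pair{\varphi}_{J_{v_1}\times\cdots\times J_{v_i}\times I_{v_{i+1}}\times\cdots\times I_{v_n}} - \pair{\varphi}_{J_{v_1}\times\cdots\times J_{v_{i-1}}\times I_{v_i}\times\cdots\times I_{v_n}}}
$$
produces, in the $i$-th summand, the auxiliary rectangle $J_{v_1}\times\cdots\times J_{v_{i-1}}\times I_{v_{i+1}}\times\cdots\times I_{v_n}$, which is \emph{not} the $Q_{\seq{v}'_i}$ of the lemma; the lemma has $I$'s in positions $<i$ and $J$'s in positions $>i$. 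Your claim that this ``match[es] the three cases in the statement'' is incorrect. To obtain the stated formula you must telescope the other way, switching coordinates from $J$ to $I$ in the order $v_1,v_2,\dots,v_n$:
$$
\pair{\varphi}_{J_{\seq{v}}} - \pair{\varphi}_{I_{\seq{v}}} = \sum_{i=1}^{n}\Bsulk{\pair{\varphi}_{I_{v_1}\times\cdots\times I_{v_{i-1}}\times J_{v_i}\times J_{v_{i+1}}\times\cdots\times J_{v_n}} - \pair{\varphi}_{I_{v_1}\times\cdots\times I_{v_i}\times J_{v_{i+1}}\times\cdots\times J_{v_n}}}.
$$
With this correction the rest of your argument goes through verbatim.
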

\begin{proof}
The case $n=1$ follows easily from the telescoping nature of the sum.
The case $n=2$ follows from this as follows.
 For notational simplicity only, let $v_1 = 1$ and $v_2 = 2.$ Observe that
\begin{align*}
\pair{\varphi}_{J_{1} \times J_{2}} - \pair{\varphi}_{I_{1} \times I_{2}} &= \pair{\varphi}_{J_{1} \times J_{2}} - \pair{\varphi}_{I_{1} \times J_{2}} + \pair{\varphi}_{I_{1} \times J_{2}}  - \pair{\varphi}_{I_{1} \times I_{2}}.
\end{align*}
Since $K_{i}$ is some parent cube for both $I_{i}$ and $J_{i}$, we can use the one-parameter expansion  $\pair{\varphi}_{Q_i,i} - \pair{\varphi}_{K_i,i} = \sum_{q = 1}^{q_i} \pair{\Delta_{Q_i^{(q)}}^i \varphi}_{Q_i,i},$ where $Q_i^{(q_i)} = K_i.$ Thus, we have
\begin{align*}
\pair{\varphi}_{J_{1} \times J_{2}} - \pair{\varphi}_{I_{1} \times I_{2}} &= \sum_{t = 1}^{l_{1}} \pair{\Delta_{J_{1}^{(t)}}^{1} \varphi }_{J_{1} \times J_{2}} - \sum_{s = 1}^{k_1} \pair{\Delta_{I_1^{(s)}}^1 \varphi}_{I_1 \times J_2} \\
&\quad+ \sum_{t = 1}^{l_{2}} \pair{\Delta_{J_{2}^{(t)}}^{2} \varphi }_{I_{1} \times J_{2}} - \sum_{s = 1}^{k_2} \pair{\Delta_{I_2^{(s)}}^2 \varphi}_{I_1 \times I_2},
\end{align*}
as claimed.

We can continue as follows. If the claim holds for a fixed $n,$ we have
$$
	\pair{\varphi}_{J_\seq{v}} - \pair{\varphi}_{I_{\seq{v}}} = \sum_{i = 1}^{n+1} \Bsulk{\sum_{t = 1}^{l_{v_i}} \pair{\Delta_{J_{v_i}^{(t)}}^{v_i} \varphi}_{J_{v_i} \times Q_{\seq{v}'_i}} - \sum_{s = 1}^{k_{v_i}} \pair{\Delta_{I_{v_i}^{(s)}}^{v_i} \varphi}_{I_{v_i} \times Q_{\seq{v}'_i}}}
$$
for $\seq{v} = (v_i)_{i = 1}^{n+1}.$
Indeed, for notational simplicity let again $v_i = i$ for all $i = 1,2,\dots, n+1$, and notice that we may write
\begin{align*}
	\pair{\varphi}_{J_\seq{v}} - \pair{\varphi}_{I_{\seq{v}}} &= 	\pair{\varphi}_{J_1 \times J_{\seq{u}}} - \pair{\varphi}_{I_{1} \times J_{\seq{u}}} +  \pair{\varphi}_{I_{1} \times J_{\seq{u}}} - \pair{\varphi}_{I_1 \times I_{\seq{u}}} \\
	&=: A + B,
\end{align*} 
where $\seq{u} = (i)_{i=2}^{n+1}.$
For the term $A$ we use the one-parameter expansion and for the term $B$ we use the assumption that the claim holds for $n$ parameters. Hence, we get
\begin{align*}
\pair{\varphi}_{J_\seq{v}} - \pair{\varphi}_{I_{\seq{v}}} &= \sum_{t = 1}^{l_{1}} \pair{\Delta_{J_{1}^{(t)}}^{1} \varphi}_{J_{1} \times J_{\seq{u}}} - \sum_{s = 1}^{k_{1}} \pair{\Delta_{I_{1}^{(s)}}^{1} \varphi}_{I_{1} \times J_{\seq{u}}} \\
&\quad + \sum_{i = 2}^{n+1} \Bsulk{\sum_{t = 1}^{l_{i}} \pair{\Delta_{J_{i}^{(t)}}^{i} \varphi}_{I_1 \times J_i \times  Q_{\seq{u}'_i}} - \sum_{s = 1}^{k_{i}} \pair{\Delta_{I_{i}^{(s)}}^{i} \varphi}_{ I_1 \times I_i \times  Q_{\seq{u}'_i}}}\\
&= \sum_{i = 1}^{n+1} \Bsulk{\sum_{t = 1}^{l_{i}} \pair{\Delta_{J_{i}^{(t)}}^{i} \varphi}_{J_{i} \times Q_{\seq{v}'_i}} - \sum_{s = 1}^{k_{i}} \pair{\Delta_{I_{i}^{(s)}}^{i} \varphi}_{I_{i} \times Q_{\seq{v}'_i}}},
\end{align*}
where $Q_{\seq{u}'_i}$ and $Q_{\seq{v}'_i}$ are defined as in the statement.
 \end{proof}
 
 The main result of this section is to show the boundedness of the commutators with paraproduct free Calder\'on-Zygmund operators. By the representation theorem, it is enough to consider the commutators of dyadic shifts, Theorem \ref{thm:iteratedShift}. The strategy is to expand the commutator using martingale differences. This leaves us with terms that are compositions of shifts and paraproducts, legal or illegal ones. In the case of illegal paraproducts, we combine some terms together and apply Lemma \ref{lem:diffofavgs}. This leads to terms, which all fall under the general term \eqref{eq:generalterm}. Before we show in detail how to expand the commutators, we present the general term and show its boundedness.

Assume that $\bI  = \ksulku{\bI_i: i = 1,2,3,4,5}$ is a partition of $\ksulku{1,2,\dots, m}.$  We set $\seq{v}^i = (v^i_j)_{v_j^i \in \mathcal{I}_i}.$
The general term is defined as 
\begin{align}\label{eq:generalterm}
 &\sum_{\substack{K_{\seq{v}^i} \in \bD^{d_{\seq{v}^i}} \\ K_{\seq{v}^5} \in \bD^{d_{\seq{v}^5}}}}  \sum_{ \substack{I_{\seq{v}^i}, J_{\seq{v}^i} \in \bD^{d_{\seq{v}^i}} \\ I_{\seq{v}^i}^{\sulku{\sseqi{k}{v}{i}}}= K_{\seq{v}^i}\\J_{\seq{v}^i}^{\sulku{\sseqi{l}{v}{i}}}=K_{\seq{v}^i}}} \alpha_{K_{\seq{v}^i}, I_{\seq{v}^i}, J_{\seq{v}^i}} \beta_{ I_{\seq{v}^1}, J_{\seq{v}^3}}  \Bpair{b,  h_{I_{\seq{v}^1}^{(\seq{s})}} \otimes  \frac{1_{I_{\seq{v}^2}}}{|I_{\seq{v}^2}|}   \otimes h_{J_{\seq{v}^3}^{(\seq{t})}}\otimes  \frac{1_{J_{\seq{v}^4}}}{|J_{\seq{v}^4}|} \otimes h_{K_{\seq{v}^5}} }\\
&\hspace{4em}\times \Bpair{f,  h_{I_{\seq{v}^1}} \otimes h_{I_{\seq{v}^2}}  \otimes h_{I_{\seq{v}^3}}  \otimes h_{I_{\seq{v}^4}} \otimes \frac{1_{K_{\seq{v}^5}}}{|K_{\seq{v}^5}|}}   \wt{h}_{J_{\seq{v}^1}} \otimes h_{J_{\seq{v}^2}} \otimes \wt{h}_{J_{\seq{v}^3}}  \otimes h_{J_{\seq{v}^4}}\otimes h_{K_{\seq{v}^5}}, \nonumber
\end{align}
where $i =1,2,3,4,$  
$k_{v^i_j},l_{v^i_j} \geq 0$ for $j \in \bI_i,$ $s_j \leq k_{v^1_j}$ for $j \in \bI_1,$ $t_j \leq l_{v^3_j}$ for $j \in \bI_3,$ 
and 
$$ 
|\alpha_{K_{\seq{v}^i}, I_{\seq{v}^i}, J_{\seq{v}^i}}|\le \prod_{j = 1}^4 \frac{|I_{\seq{v}^{j}}|^\pinv{2}|J_{\seq{v}^{j}}|^\pinv{2}}{|K_{\seq{v}^{j}}|} ,
$$
$$
|\beta_{ I_{\seq{v}^1}, J_{\seq{v}^3}}|\le \babs{I_{\seq{v}^1}^{(\seq{s})} }^{-\pinv{2}}  \babs{J_{\seq{v}^3}^{( \seq{t} )}}^{-\pinv{2}}, $$
$$
  |\wt{h}_Q|\le |Q|^{-\pinv 2}1_Q .$$ 
Moreover, if, e.g.  $\mathcal{I}_j =\emptyset$, then the related terms are understood as $1$ and we require that $\bigcup_{j = 1,3,5}\mathcal{I}_j \neq \emptyset.$  

In addition, similar to the proof of Lemma \ref{lem:paraBound}, we omit the details if some $\bI_j =\emptyset.$

\begin{lem}\label{lem:generalEstimation}
There holds
$$
\norm{\varphi}{L^{p}(\lambda)} \lesssim_{[\nu]_{A_p}} \normBMO{b}{\seq{u}}{\nu}\norm{f}{L^p{(\mu)} },
$$
where  $\varphi$ is the term \eqref{eq:generalterm} defined above and $\seq{u} = (u_j), u_j \in \bigcup_{i = 1,3,5} \mathcal{I}_i.$ 

\end{lem}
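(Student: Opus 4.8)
The plan is to follow the template of the proof of Lemma~\ref{lem:paraBound}. First I would dualize: writing $\nu = \mu^{\pinv p}(\lambda^{1-p'})^{\pinv{p'}}$, it suffices to bound $|\pair{\varphi, g}|$ for every $g$ with $\norm{g}{L^{p'}(\lambda^{1-p'})}\le 1$. Pairing $g$ against the output tensor $\wt h_{J_{\seq{v}^1}} \otimes h_{J_{\seq{v}^2}} \otimes \wt h_{J_{\seq{v}^3}} \otimes h_{J_{\seq{v}^4}} \otimes h_{K_{\seq{v}^5}}$ and collecting everything that does not involve $b$ into one function $\Theta$ on $\R^d$, we get $\pair{\varphi, g} = \pair{b, \Theta}$, where
\[
\Theta = \sum \alpha_{K_{\seq{v}^i}, I_{\seq{v}^i}, J_{\seq{v}^i}}\,\beta_{I_{\seq{v}^1}, J_{\seq{v}^3}}\, \bpair{f,\, h_{I_{\seq{v}^1}}\otimes\cdots\otimes \avgfu{K_{\seq{v}^5}}}\,\overline{\bpair{g,\, \wt h_{J_{\seq{v}^1}}\otimes\cdots\otimes h_{K_{\seq{v}^5}}}}\; h_{I_{\seq{v}^1}^{(\seq{s})}} \otimes \avgfu{I_{\seq{v}^2}} \otimes h_{J_{\seq{v}^3}^{(\seq{t})}} \otimes \avgfu{J_{\seq{v}^4}} \otimes h_{K_{\seq{v}^5}}.
\]
In the parameters of $\bI_2\cup\bI_4$ the function $\Theta$ is a sum of tensorised averages, so writing these as integrals yields $\pair{b,\Theta} = \int_{\R^{d_{\seq{u}'}}}\pair{b_{x_{\seq{u}'}},\Theta_{x_{\seq{u}'}}}_{\seq{u}}\,\dd x_{\seq{u}'}$ with $\seq{u}' = (j)_{j\in\bI_2\cup\bI_4}$ and $\Theta_{x_{\seq{u}'}}$ a function on $\R^{d_{\seq{u}}}$ (this step is vacuous if $\bI_2\cup\bI_4=\emptyset$). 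Since $b\in\BMOW{\seq{u}}{\nu}$, Proposition~\ref{prop:uniformBMO} gives
\[
|\pair{\varphi, g}| \le \normBMO{b}{\seq{u}}{\nu}\int_{\R^{d_{\seq{u}'}}} \norm{S^{\seq{u}}\Theta_{x_{\seq{u}'}}}{L^1(\nu_{x_{\seq{u}'}})}\,\dd x_{\seq{u}'} = \normBMO{b}{\seq{u}}{\nu}\,\norm{S^{\seq{u}}\Theta}{L^1(\nu)}.
\]

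Next I would compute $S^{\seq{u}}\Theta_{x_{\seq{u}'}}$ explicitly. Grouping the Haar functions by the ancestors $P_{\seq{v}^1}:=I_{\seq{v}^1}^{(\seq{s})}$ and $P_{\seq{v}^3}:=J_{\seq{v}^3}^{(\seq{t})}$ (so that the shift cubes $K_{\seq{v}^1},K_{\seq{v}^3}$ become determined once these are fixed), the multi-parameter martingale differences $\Delta^{\seq{v}^1}_{P_{\seq{v}^1}}\Delta^{\seq{v}^3}_{P_{\seq{v}^3}}\Delta^{\seq{v}^5}_{K_{\seq{v}^5}}\Theta_{x_{\seq{u}'}}$ are read off directly, so
\[
S^{\seq{u}}\Theta_{x_{\seq{u}'}} \lesssim \Big(\sum_{P_{\seq{v}^1},P_{\seq{v}^3},K_{\seq{v}^5}} \babs{c_{P_{\seq{v}^1},P_{\seq{v}^3},K_{\seq{v}^5}}}^2\, \avgfu{P_{\seq{v}^1}}\otimes\avgfu{P_{\seq{v}^3}}\otimes\avgfu{K_{\seq{v}^5}}\Big)^{\pinv 2},
\]
where $c_{\,\cdot\,}$ is the inner sum over the remaining cubes. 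Here the bound $|\beta_{I_{\seq{v}^1},J_{\seq{v}^3}}|\le |P_{\seq{v}^1}|^{-\pinv 2}|P_{\seq{v}^3}|^{-\pinv 2}$, together with $|\alpha|\le\prod_{j=1}^4 |I_{\seq{v}^j}|^{\pinv 2}|J_{\seq{v}^j}|^{\pinv 2}|K_{\seq{v}^j}|^{-1}$ and $|\wt h_Q|\le |Q|^{-\pinv 2}1_Q$, provide precisely the right weights for an application of Cauchy--Schwarz that decouples the $I$-indexed ($f$) data from the $J$-indexed ($g$) data; the residual sums over the inner cubes collapse, via telescoping in the ancestors and the standard dyadic-shift bookkeeping, into square functions in the parameters where $f$ (resp. $g$) is paired against a genuine cancellative Haar function, and maximal functions in the parameters where it is paired against an average (or against a non-cancellative $\wt h$). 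Concretely this should yield a pointwise estimate $S^{\seq{u}}\Theta_{x_{\seq{u}'}} \lesssim F_{x_{\seq{u}'}}\cdot G_{x_{\seq{u}'}}$, where $F$ is an iterated square function of $f$ in the parameters of $\bI_1\cup\bI_2\cup\bI_3\cup\bI_4$ composed with $M^{\seq{v}^5}$ (and carrying the frozen averages $\avgfu{I_{\seq{v}^2}}(x_{\seq{v}^2})$), and $G$ is an iterated square function of $g$ in $\bI_2\cup\bI_4\cup\bI_5$ composed with $M^{\seq{v}^1}M^{\seq{v}^3}$ (carrying $\avgfu{J_{\seq{v}^4}}(x_{\seq{v}^4})$).

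Finally, using $\nu=\mu^{\pinv p}(\lambda^{1-p'})^{\pinv{p'}}$ and Hölder with exponents $p,p'$ both in the space variable and in $x_{\seq{u}'}$,
\[
\norm{S^{\seq{u}}\Theta}{L^1(\nu)} \le \int_{\R^{d_{\seq{u}'}}}\norm{F_{x_{\seq{u}'}}}{L^p(\mu_{x_{\seq{u}'}})}\norm{G_{x_{\seq{u}'}}}{L^{p'}(\lambda^{1-p'}_{x_{\seq{u}'}})}\,\dd x_{\seq{u}'} \le \norm{F}{L^p(\mu)}\norm{G}{L^{p'}(\lambda^{1-p'})},
\]
where now $F,G$ denote the corresponding functions on $\R^d$ (the frozen averages turning into honest averages after this last Hölder step). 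Since $\mu\in A_p(\R^d)$ and $\lambda^{1-p'}\in A_{p'}(\R^d)$, repeated application of Lemma~\ref{lem:FSgen} (together with Lemmas~\ref{lem:FS} and~\ref{lem:squareSim}) gives $\norm{F}{L^p(\mu)}\lesssim\norm{f}{L^p(\mu)}$ and $\norm{G}{L^{p'}(\lambda^{1-p'})}\lesssim\norm{g}{L^{p'}(\lambda^{1-p'})}\le 1$, and combining the displays proves the claim. The degenerate cases where some $\bI_j$ is empty are handled exactly as indicated after \eqref{eq:generalterm} and in the proof of Lemma~\ref{lem:paraBound}.

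The main obstacle is the middle step: faithfully tracking the ancestor relations $I^{(\seq{s})}$, $J^{(\seq{t})}$ and the shift constraints $I^{(\seq{k})}=K=J^{(\seq{l})}$ through the square-function identity, and organising the iterated Cauchy--Schwarz so that the $b$-, $f$- and $g$-data separate cleanly while each parameter ends up attached to the correct object -- an $S$ where a cancellative Haar function sits, an $M$ where an average or a $\wt h$ sits -- so that the multi-parameter Fefferman--Stein bound of Lemma~\ref{lem:FSgen} is genuinely applicable. The delicate point is the $\bI_5$-block, where $f$ meets an average and $b$ meets a Haar function, producing the maximal function on the $f$-side that must be absorbed last.
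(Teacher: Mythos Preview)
Your proposal is essentially correct and follows the same route as the paper's proof: dualize, rewrite $\pair{\varphi,g}=\pair{b,\Theta}$, freeze the variables in the parameters $\bI_2\cup\bI_4$ where $b$ is paired against non-cancellative functions, invoke the $\BMO^{\seq{u}}(\nu)$ duality (Proposition~\ref{prop:uniformBMO}), and then decouple the resulting square-function expression into an $f$-piece and a $g$-piece that are controlled by Lemmas~\ref{lem:FS}--\ref{lem:FSgen}. The only organisational difference is that the paper takes absolute values and pulls the sums over $K_{\seq{v}^2},K_{\seq{v}^4}$ (and the sub-cubes $I_{\seq{v}^2},J_{\seq{v}^4}$) \emph{outside} before applying the BMO duality, while you keep them inside $\Theta$ and only separate them afterwards. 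The paper's ordering makes the subsequent Cauchy--Schwarz decoupling slightly more transparent: with $K_{\seq{v}^2},K_{\seq{v}^4}$ fixed, the martingale-block replacement for $f$ and $g$ (in the appropriate parameters) is immediate, the sums over the remaining inner cubes $I_{\seq{v}^1},J_{\seq{v}^1},I_{\seq{v}^3},J_{\seq{v}^3},I_{\seq{v}^4},J_{\seq{v}^2}$ collapse into averages of $|\Delta f|$ and $|\Delta g|$, and the sum over $P_{\seq{v}^1},Q_{\seq{v}^3}$ (your $P_{\seq{v}^1},P_{\seq{v}^3}$) becomes a maximal function $M^{\seq{v}^1,\seq{v}^3,\seq{v}^5}$ on \emph{both} sides, not only on the $g$-side. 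Your description of $F$ and $G$ is thus slightly looser than what the argument actually produces (in the paper both pieces carry the full $M^{\seqm}$ after the final regrouping), but the structural principle you state --- square function where a cancellative Haar sits, maximal function where an average or a $\wt h$ sits --- is exactly right, and the remainder is the bookkeeping you correctly identify as the main obstacle.
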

\begin{proof}
We begin by using the size conditions of $\alpha$ and $\beta$ for the dual form. Hence, we have
  \begin{align*}
&\Babs{\sum_{\substack{K_{\seq{v}^i},K_{\seq{v}^5}}}  \sum_{ \substack{I_{\seq{v}^i}^{\sulku{\sseqi{k}{v}{i}}}= K_{\seq{v}^i}\\J_{\seq{v}^i}^{\sulku{\sseqi{l}{v}{i}}}=K_{\seq{v}^i}}} \alpha_{K_{\seq{v}^i}, I_{\seq{v}^i}, J_{\seq{v}^i}} \beta_{ I_{\seq{v}^1}, J_{\seq{v}^3}}  \Bpair{b,  h_{I_{\seq{v}^1}^{(\seq{s})} \times J_{\seq{v}^3}^{(\seq{t})}} \otimes  \frac{1_{I_{\seq{v}^2} \times J_{\seq{v}^4}}}{|I_{\seq{v}^2} \times J_{\seq{v}^4}|}  \otimes h_{K_{\seq{v}^5}} }\\
&\hspace{4em}\times \Bpair{f,  h_{I_{\seq{v}^1}\times I_{\seq{v}^2} \times I_{\seq{v}^3}  \times I_{\seq{v}^4}} \otimes \frac{1_{K_{\seq{v}^5}}}{|K_{\seq{v}^5}|}} \pair{g,   \wt{h}_{J_{\seq{v}^1}} \otimes \wt{h}_{J_{\seq{v}^3}} \otimes h_{J_{\seq{v}^2} \times J_{\seq{v}^4} \times K_{\seq{v}^5}}}}\\
 &\leq \sum_{\substack{K_{\seq{v}^i},K_{\seq{v}^5}}}   \sum_{ \substack{I_{\seq{v}^i}^{\sulku{\sseqi{k}{v}{i}}}= K_{\seq{v}^i}\\J_{\seq{v}^i}^{\sulku{\sseqi{l}{v}{i}}}=K_{\seq{v}^i}}} \prod_{j = 1}^4 \frac{|I_{\seq{v}^j}|^\pinv{2}|J_{\seq{v}^j}|^\pinv{2}}{|K_{\seq{v}^j}|} \babs{I_{ \seq{v}^1}^{(\seq{s})} }^{-\pinv{2}}  \babs{J_{\seq{v}^3}^{(\seq t)} }^{-\pinv{2}} \babs{I_{\seq{v}^2} \times J_{\seq{v}^4}}^{-1} \\
  &\hspace{3em}\int_{I_{\seq{v}^2} \times J_{\seq{v}^4}}\Bbrac{ \babs{\pair{b,  h_{I_{\seq{v}^1}^{(\seq{s})} \times J_{\seq{v}^3}^{(\seq{t})}}  \otimes h_{K_{\seq{v}^5}} }_{\seq{v}^1, \seq{v}^3, \seq{v}^5}}\Babs{\Bpair{f,  h_{I_{\seq{v}^1}\times I_{\seq{v}^2} \times I_{\seq{v}^3}  \times I_{\seq{v}^4}} \otimes \frac{1_{K_{\seq{v}^5}}}{|K_{\seq{v}^5}|}}} \\
 &\hspace{18em}\babs{\pair{g,   \wt{h}_{J_{\seq{v}^1}} \otimes \wt{h}_{J_{\seq{v}^3}} \otimes h_{J_{\seq{v}^2} \times J_{\seq{v}^4} \times K_{\seq{v}^5}}}}} \\
 &= \sum_{K_{\seq{v}^2},K_{\seq{v}^4}} \sum_{ \substack{I_{\seq{v}^2}^{\sulku{\sseqi{k}{v}{2}}}= K_{\seq{v}^2}}}\sum_{ \substack{J_{\seq{v}^4}^{(\sseqi{l}{v}{4})}=K_{\seq{v}^4}}} \int_{I_{\seq{v}^2} \times J_{\seq{v}^4}} A^b(x_{\seq{v}^2, \seq{v}^4}) \dd {x_{\seq{v}^2, \seq{v}^4}},
  \end{align*}
where 
\begin{align*}
A^b(x_{\seq{v}^2, \seq{v}^4}) &:=  \sum_{K_{\seq{v}^1}, K_{\seq{v}^3}, K_{\seq{v}^5}}  \sum_{ \substack{P_{\seq{v}^1}^{(\sseqi{k}{v}{1} - \seq{s})}= K_{\seq{v}^1} \\ Q_{\seq{v}^3}^{(\sseqi{l}{v}{3} - \seq{t})} = K_{\seq{v}^3} }}  \sum_{\substack{I_{\seq{v}^1}^{(\seq{s})} = P_{\seq{v}^1} \\ J_{\seq{v}^3}^{(\seq{t})} = Q_{\seq{v}^3}}}\sum_{\substack{I_{\seq{v}^3}^{(k_{\seq{v}^3})} = K_{\seq{v}^3} \\ J_{\seq{v}^1}^{(l_{\seq{v}^1})} = K_{\seq{v}^1}}} \sum_{\substack{I_{\seq{v}^4}^{(k_{\seq{v}^4})} = K_{\seq{v}^4} \\ J_{\seq{v}^2}^{(l_{\seq{v}^2})} = K_{\seq{v}^2}}}\\
 &\times \prod_{j = 1}^4 \frac{|I_{\seq{v}^j}|^\pinv{2}|J_{\seq{v}^j}|^\pinv{2}}{|K_{\seq{v}^j}|} \babs{P_{ \seq{v}^1} }^{-\pinv{2}}  \babs{Q_{\seq{v}^3}}^{-\pinv{2}} \babs{I_{\seq{v}^2} \times J_{\seq{v}^4}}^{-1}\babs{\pair{b_{x_{\seq{v}^2,\seq{v}^4}},  h_{P_{\seq{v}^1}\times Q_{\seq{v}^3}}  \otimes h_{K_{\seq{v}^5}} }}\\
 &\times \Babs{\Bpair{ f,  h_{I_{\seq{v}^1}\times I_{\seq{v}^2} \times I_{\seq{v}^3}  \times I_{\seq{v}^4}} \otimes \frac{1_{K_{\seq{v}^5}}}{|K_{\seq{v}^5}|}}}\babs{\pair{g,   \wt{h}_{J_{\seq{v}^1}} \otimes \wt{h}_{J_{\seq{v}^3}} \otimes h_{J_{\seq{v}^2} \times J_{\seq{v}^4} \times K_{\seq{v}^5}}}}.
\end{align*}

Then we proceed by replacing $f$ and $g$ with suitable multi-parameter martingale blocks, i.e.
\begin{align*}
&A^b(x_{\seq{v}^2, \seq{v}^4}) \\
&=  \sum_{K_{\seq{v}^1}, K_{\seq{v}^3}, K_{\seq{v}^5}}  \sum_{ \substack{P_{\seq{v}^1}^{(\sseqi{k}{v}{1} - \seq{s})}= K_{\seq{v}^1} \\ Q_{\seq{v}^3}^{(\sseqi{l}{v}{3} - \seq{t})} = K_{\seq{v}^3} }}  \sum_{\substack{I_{\seq{v}^1}^{(\seq{s})} = P_{\seq{v}^1} \\ J_{\seq{v}^3}^{(\seq{t})} = Q_{\seq{v}^3}}}\sum_{\substack{I_{\seq{v}^3}^{(k_{\seq{v}^3})} = K_{\seq{v}^3} \\ J_{\seq{v}^1}^{(l_{\seq{v}^1})} = K_{\seq{v}^1}}} \sum_{\substack{I_{\seq{v}^4}^{(k_{\seq{v}^4})} = K_{\seq{v}^4} \\ J_{\seq{v}^2}^{(l_{\seq{v}^2})} = K_{\seq{v}^2}}}\\
 &\qquad\times \prod_{j = 1}^4 \frac{|I_{\seq{v}^j}|^\pinv{2}|J_{\seq{v}^j}|^\pinv{2}}{|K_{\seq{v}^j}|} \babs{P_{ \seq{v}^1} }^{-\pinv{2}}  \babs{Q_{\seq{v}^3}}^{-\pinv{2}} \babs{I_{\seq{v}^2} \times J_{\seq{v}^4}}^{-1}\babs{\pair{b_{x_{\seq{v}^2,\seq{v}^4}},  h_{P_{\seq{v}^1}\times Q_{\seq{v}^3}}  \otimes h_{K_{\seq{v}^5}} }}\\
 &\qquad\times \Babs{\Bpair{\Delta_{K_{\seq{v}^1}\times K_{\seq{v}^2}\times K_{\seq{v}^3}  \times K_{\seq{v}^4}, (\sseqi{k}{v}{j})_{j =1,2,3,4}}^{{\seq{v}^1,  \seq{v}^2, \seq{v}^3, \seq{v}^4}} f,  h_{I_{\seq{v}^1}\times I_{\seq{v}^2} \times I_{\seq{v}^3}  \times I_{\seq{v}^4}} \otimes \frac{1_{K_{\seq{v}^5}}}{|K_{\seq{v}^5}|}}} \\
 &\qquad\times\babs{\pair{\Delta_{  K_{\seq{v}^2}\times K_{\seq{v}^4} \times K_{\seq{v}^5}, (\sseqi{l}{v}{2}, \sseqi{l}{v}{4}, \seq{0})}^{{ \seq{v}^2, \seq{v}^4}, \seq{v}^5}g,   \wt{h}_{J_{\seq{v}^1}} \otimes \wt{h}_{J_{\seq{v}^3}} \otimes h_{J_{\seq{v}^2} \times J_{\seq{v}^4} \times K_{\seq{v}^5}}}}\\
 &\leq  \sum_{K_{\seq{v}^1}, K_{\seq{v}^3}, K_{\seq{v}^5}}  \sum_{ \substack{P_{\seq{v}^1}^{(\sseqi{k}{v}{1} - \seq{s})}= K_{\seq{v}^1}}} \sum_{ \substack{Q_{\seq{v}^3}^{(\sseqi{l}{v}{3} - \seq{t})}=K_{\seq{v}^3}}} |P_{\seq{v}^1}|^{\pinv{2}} |Q_{\seq{v}^3}|^{\pinv{2}} |K_{\seq{v}^5}|^\pinv{2} \\
&\qquad\times \babs{\pair{b_{x_{\seq{v}^2, \seq{v}^4}}, h_{P_{\seq{v}^1} \times Q_{\seq{v}^3}} \otimes  h_{K_{\seq{v}^5}} }}\\ 
&\qquad\times \pair{|\Delta_{K_{\seq{v}^1}\times K_{\seq{v}^2}\times K_{\seq{v}^3}  \times K_{\seq{v}^4}, (\sseqi{k}{v}{j})_{j =1,2,3,4}}^{{\seq{v}^1,  \seq{v}^2, \seq{v}^3, \seq{v}^4}} f|}_{P_{\seq{v}^1}    \times I_{\seq{v}^2}\times K_{\seq{v}^3} \times K_{\seq{v}^4}\times K_{\seq{v}^5}} 
 \\ &\qquad\times \pair{|\Delta_{  K_{\seq{v}^2}\times K_{\seq{v}^4} \times K_{\seq{v}^5}, (\sseqi{l}{v}{2}, \sseqi{l}{v}{4}, \seq{0})}^{{ \seq{v}^2, \seq{v}^4}, \seq{v}^5} g|}_{ K_{\seq{v}^1} \times K_{\seq{v}^2} \times Q_{\seq{v}^3}  \times J_{\seq{v}^4}\times K_{\seq{v}^5}},
\end{align*}
where we summed up rectangles of levels $\seq{s},\seq{t},k_{\seq{v}^3},l_{\seq{v}^1},k_{\seq{v}^4},l_{\seq{v}^2} $   after modulus is taken inside of the pairings of martingale blocks of $f$ and $g.$

Now, using Proposition \ref{prop:uniformBMO} for $A^b$ with fixed $x_{\seq{v}^2, \seq{v}^4}$ we get
\begin{align*}
&A^b(x_{\seq{v}^2, \seq{v}^4}) \lesssim \normBMO{b}{\seq{u}}{\nu} \int_{\R^{d_{\seq{v}^1} +d_{\seq{v}^3} + d_{\seq{v}^5} }} \\
 &\hspace{2em}\Bsulk{ \sum_{K_{\seq{v}^1}, K_{\seq{v}^3}}\bbrac{M^{\seq{v}^1, \seq{v}^3, \seq{v}^5} \pair{|\Delta_{K_{\seq{v}^1} \times K_{\seq{v}^2} \times K_{\seq{v}^3} \times K_{\seq{v}^4}, (\sseqi{k}{v}{j})_{j =1,2,3,4}}^{{\seq{v}^1,  \seq{v}^2,\seq{v}^3, \seq{v}^4}} f|}_{ I_{\seq{v}^2} \times K_{\seq{v}^4}, \seq{v}^2, \seq{v}^4}}^2 1_{K_{\seq{v}^1}  K_{\seq{v}^3}} }^\pinv{2}
 \\  &\hspace{2em} \Bsulk{\sum_{K_{\seq{v}^5}}   \bbrac{M^{\seq{v}^1, \seq{v}^3, \seq{v}^5} \pair{|\Delta_{  K_{\seq{v}^2}\times K_{\seq{v}^4} \times K_{\seq{v}^5}, (\sseqi{l}{v}{2}, \sseqi{l}{v}{4}, \seq{0})}^{{ \seq{v}^2, \seq{v}^4}, \seq{v}^5} g|}_{ K_{\seq{v}^2}  \times J_{\seq{v}^4}, \seq{v}^2, \seq{v}^4}}^2   1_{ K_{\seq{v}^5}}}^\pinv{2}\nu_{x_{\seq{v}^2, \seq{v}^4}},
\end{align*}
where again we summed over the rectangles $P_{\seq{v}^1}, Q_{\seq{v}^3}$ and $\seq{u} = (u_j), u_j\in \bigcup_{i = 1,3,5}\mathcal{I}_i.$
Notice that here we needed the requirement that  $\bigcup_{i = 1,3,5}\mathcal{I}_i \neq \emptyset.$

Hence, we can conclude that
\begin{align*}
&\sabs{\pair{\varphi,g}} \lesssim \normBMO{b}{\seq{u}}{\nu} \int \sum_{K_{\seq{v}^2},K_{\seq{v}^4}} \sum_{ \substack{I_{\seq{v}^2}^{\sulku{\sseqi{k}{v}{2}}}= K_{\seq{v}^2}}}\sum_{ \substack{J_{\seq{v}^4}^{(\sseqi{l}{v}{4})}=K_{\seq{v}^4}}}  \\ 
&\times \Bsulk{\sum_{K_{\seq{v}^1}, K_{\seq{v}^3}} \bbrac{M^{\seq{v}^1, \seq{v}^3, \seq{v}^5} \pair{|\Delta_{K_{\seq{v}^1} \times K_{\seq{v}^2} \times K_{\seq{v}^3}\times K_{\seq{v}^4}, (\sseqi{k}{v}{j})_{j =1,2,3,4}}^{{\seq{v}^1,  \seq{v}^2,\seq{v}^3, \seq{v}^4}} f|}_{ I_{\seq{v}^2} \times K_{\seq{v}^4}, \seq{v}^2, \seq{v}^4} }^2   1_{K_{\seq{v}^1} \times K_{\seq{v}^3}}}^\pinv{2}
 \\  &\times \Bsulk{\sum_{K_{\seq{v}^5}} \bbrac{M^{\seq{v}^1, \seq{v}^3, \seq{v}^5} \pair{|\Delta_{  K_{\seq{v}^2}\times K_{\seq{v}^4} \times K_{\seq{v}^5}, (\sseqi{l}{v}{2}, \sseqi{l}{v}{4}, \seq{0})}^{{ \seq{v}^2, \seq{v}^4}, \seq{v}^5} g|}_{ K_{\seq{v}^2}  \times J_{\seq{v}^4}, \seq{v}^2, \seq{v}^4}}^2  1_{K_{\seq{v}^5}}  }^\pinv{2}   1_{I_{\seq{v}^2} \times J_{\seq{v}^4}}  \nu \\
 &=: \normBMO{b}{\seq{u}}{\nu} \times  \text{I}.
\end{align*}
Using standard estimates we get
\begin{align*}
\text{I} &\lesssim \int  \Bsulk{\sum_{\substack{ K_{\seq{v}^1} K_{\seq{v}^2} \\ K_{\seq{v}^3}, K_{\seq{v}^4}}} \bbrac{M^{\seqm} \Delta_{K_{\seq{v}^1} \times K_{\seq{v}^2} \times K_{\seq{v}^3}\times K_{\seq{v}^4}, (\sseqi{k}{v}{j})_{j =1,2,3,4}}^{{\seq{v}^1,  \seq{v}^2,\seq{v}^3, \seq{v}^4}} f }^2  1_{K_{\seq{v}^1}  \times K_{\seq{v}^2}\times K_{\seq{v}^3} \times K_{\seq{v}^4}}}^\pinv{2}
 \\  &\qquad\times \Bsulk{\sum_{K_{\seq{v}^2}, K_{\seq{v}^4}, K_{\seq{v}^5}} \bbrac{M^{\seqm}\Delta_{  K_{\seq{v}^2}\times K_{\seq{v}^4} \times K_{\seq{v}^5}, (\sseqi{l}{v}{2}, \sseqi{l}{v}{4}, \seq{0})}^{{ \seq{v}^2, \seq{v}^4}, \seq{v}^5} g}^2   1_{K_{\seq{v}^2} \times K_{\seq{v}^4} \times K_{\seq{v}^5}} }^\pinv{2}   \nu \\
 &\lesssim \norm{f}{L^p(\mu)} \norm{g}{L^{p'}(\lambda^{1- p'})}.
\end{align*}

\end{proof}

For simplicity, we begin with the case of two iterations.
\begin{thm}
Let $\bI = \ksulku{\bI_1, \bI_2}$ be some partition of $\colM,$ such that $\bI_1 \neq \emptyset \neq \bI_2.$
Let $\nu = \mu^\pinv{p} \lambda^{-\pinv{p}},$ where $\mu, \lambda \in A_p(\R^{d_1} \times \dots \times \R^{d_m}),$
and $\seq{u} = (u_i)_{i \in \bI_1}$ and $\seq{v} = (v_i)_{i \in \bI_2}.$ It holds
$$
\norm{[\bS^{\seq{u}}, [b,\bS^{\seq{v}}]]}{L^p(\mu) \rightarrow L^{p}(\lambda)}
\lesssim \normbmo{b}{\bI}{\nu}
$$
for $1<p<\infty.$
\end{thm}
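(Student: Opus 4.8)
Since $\bI_1$ and $\bI_2$ are disjoint, the shifts $\bS^{\seq{u}}$ and $\bS^{\seq{v}}$ act on disjoint groups of variables and hence commute, so that
\[
[\bS^{\seq{u}},[b,\bS^{\seq{v}}]]f = \bS^{\seq{u}}(b\,\bS^{\seq{v}}f) - \bS^{\seq{v}}\bS^{\seq{u}}(bf) - b\,\bS^{\seq{v}}\bS^{\seq{u}}f + \bS^{\seq{v}}(b\,\bS^{\seq{u}}f).
\]
Thus the operator is a genuine double commutator in the two parameter groups, and the plan is to expand every product of the form $b\cdot(\text{operator applied to }f)$ by the paraproduct decomposition of Section~\ref{sec:expansion}, first in the parameters of $\bI_2$ (this handles the inner commutator $[b,\bS^{\seq{v}}]$) and then in the parameters of $\bI_1$ (the outer commutator). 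Throughout I use that $\bS^{\seq{u}}$ and $\bS^{\seq{v}}$ are bounded on $L^p(w)$ for any multi-parameter $w\in A_p$: this follows from Lemma~\ref{lem:squareSim}, since the square function of a shift of $g$ is pointwise dominated by that of a suitable martingale block of $g$, hence by $S^{\seq{v}}g$ or $S^{\seq{u}}g$.

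First I would expand the inner commutator. Writing $bg=\sum_{\seq{i}\in\{1,2,3\}^{\#\bI_2}}A_{\seq{i}}^{\seq{v}}(b,g)$ and using this for $g=f$ and for $g=\bS^{\seq{v}}f$ gives $[b,\bS^{\seq{v}}]=\sum_{\seq{i}}\big(A_{\seq{i}}^{\seq{v}}(b,\bS^{\seq{v}}\,\cdot)-\bS^{\seq{v}}(A_{\seq{i}}^{\seq{v}}(b,\cdot))\big)$. When $\seq{i}\neq(3,\dots,3)$ the summand is \emph{legal}: in the $\bI_2$-directions where the index is $1$ or $2$, $b$ is hit by a cancellative martingale difference, so $b$ carries a genuine cancellative pairing in at least one $\bI_2$-direction, and expanding also the composition $\bS^{\seq{v}}\circ A_{\seq{i}}^{\seq{v}}(b,\cdot)$ puts the summand, after routine normalisation, into the shape of a shift-type operator with a paraproduct-type coupling of $b$. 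When $\seq{i}=(3,\dots,3)$ one writes both summands explicitly using that $E_{I_{v_j}}^{v_j}$ replaces $b$ by its average over $I_{v_j}$, and combines the matching terms; the coefficients collapse to a single difference of averages $\pair{b}_{J_{\seq{v}}}-\pair{b}_{I_{\seq{v}}}$, which Lemma~\ref{lem:diffofavgs} rewrites as a finite sum (of order $\sum_j(k_{v_j}+l_{v_j})$ pieces) in each of which $b$ is again paired with a genuine cancellative Haar function in at least one $\bI_2$-direction. Hence $[b,\bS^{\seq{v}}]$ is a finite sum of operators of the schematic form $\sum_{Q_{\seq{v}}}c_{Q_{\seq{v}}}[b]\,T_{Q_{\seq{v}}}$, where $T_{Q_{\seq{v}}}$ is built from shifts and martingale differences in the $\bI_2$-variables only, and $c_{Q_{\seq{v}}}[b]$ is obtained from $b$ by Haar/average projections only in $\bI_2$-directions, cancellative in at least one of them, and still depending on the $\bI_1$-variables.

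Next I would apply the outer commutator. Since $T_{Q_{\seq{v}}}$ acts only in the $\bI_2$-variables it commutes with $\bS^{\seq{u}}$, so $[\bS^{\seq{u}},\sum_{Q_{\seq{v}}}c_{Q_{\seq{v}}}[b]\,T_{Q_{\seq{v}}}]g=\sum_{Q_{\seq{v}}}[\bS^{\seq{u}},c_{Q_{\seq{v}}}[b]\,\cdot]\,(T_{Q_{\seq{v}}}g)$, and the problem reduces to the one-group commutator $[\bS^{\seq{u}},c\,\cdot]$ of a shift with multiplication by a fixed function $c$. Expanding $cg=\sum_{\seq{j}\in\{1,2,3\}^{\#\bI_1}}A_{\seq{j}}^{\seq{u}}(c,g)$ exactly as above --- legal part when $\seq{j}\neq(3,\dots,3)$, and Lemma~\ref{lem:diffofavgs} in the $\bI_1$-variables for $\seq{j}=(3,\dots,3)$ --- produces a finite sum of terms in each of which the factor from $c$ is paired with a cancellative Haar function in at least one $\bI_1$-direction. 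Restoring $c=c_{Q_{\seq{v}}}[b]$ (the $\bI_1$- and $\bI_2$-operations commute, acting on different variables), every resulting term is a composition of two shift-type operators with a paraproduct-type coupling of $b$ in which $b$ is paired with a cancellative Haar function in at least one direction of $\bI_1$ and at least one direction of $\bI_2$. With the roles of $\bI_1,\dots,\bI_5$ in \eqref{eq:generalterm} played by the rescued cancellative $\bI_2$-directions, the rescued cancellative $\bI_1$-directions, the remaining cancellative directions of the legal paraproducts, and the two families of averaged directions, every such term has the normal form \eqref{eq:generalterm}; so Lemma~\ref{lem:generalEstimation} bounds it by $\normBMO{b}{\seq{w}}{\nu}\,\norm{f}{L^p(\mu)}$, where $\seq{w}$ is the tuple of cancellative $b$-directions and therefore contains some $u_0\in\bI_1$ and some $v_0\in\bI_2$. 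By Lemma~\ref{lem:lBMO} and \eqref{eq:littleproductBMO}, $\normBMO{b}{\seq{w}}{\nu}\lesssim\normBMO{b}{(u_0,v_0)}{\nu}\le\normbmo{b}{\bI}{\nu}$, and summing the (polynomially in the shift complexities) many terms completes the argument.

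The main obstacle will be the bookkeeping rather than any new analytic estimate: one must track, through both the $\bI_2$- and the $\bI_1$-expansion, which terms are illegal, match them correctly across the four summands of the double commutator so that the difference-of-averages structure genuinely appears, and then verify that after the two applications of Lemma~\ref{lem:diffofavgs} every term really has the form \eqref{eq:generalterm} with the stated size bounds on its $\alpha$, $\beta$ and $\wt{h}$ coefficients. The analytic inputs --- Lemmas~\ref{lem:paraBound}, \ref{lem:generalEstimation} and \ref{lem:diffofavgs}, together with the weighted square-function estimates and the Fefferman--Stein inequality --- are all already in place, so the remaining work is combinatorial and notational.
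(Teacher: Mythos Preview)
Your proposal is correct and follows essentially the same strategy as the paper: expand the products via the paraproduct decomposition of Section~\ref{sec:expansion}, use Lemma~\ref{lem:diffofavgs} to rescue the all-illegal pieces, and reduce everything to the model term \eqref{eq:generalterm} handled by Lemma~\ref{lem:generalEstimation}. The only difference is organisational. The paper expands all four products simultaneously in \emph{all} $m$ parameters, obtaining terms indexed by $(i_{\seq{u}},i_{\seq{v}})\in\{1,2,3\}^{n}\times\{1,2,3\}^{m-n}$, and then sorts into the four cases (both legal; $i_{\seq{u}}$ illegal, $i_{\seq{v}}$ legal; the symmetric case; both illegal). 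In the mixed cases the paper pairs the four summands two-by-two and \emph{factors out} the bounded shift acting on the legal side (e.g.\ reduces $\bS^{\seq{u}}(A(b,\bS^{\seq{v}}f))-A(b,\bS^{\seq{u}}\bS^{\seq{v}}f)$ to $\bS^{\seq{u}}(A(b,f))-A(b,\bS^{\seq{u}}f)$ by pulling $\bS^{\seq{v}}$ outside), before applying Lemma~\ref{lem:diffofavgs} in the illegal directions only. Your sequential approach --- expand in $\bI_2$, then take $[\bS^{\seq{u}},\,\cdot\,]$, then expand in $\bI_1$ --- produces the same terms but does not use this factoring; instead you absorb the extra shift into $T_{Q_{\seq{v}}}$ and match \eqref{eq:generalterm} directly (with $t=0$ or $s=0$ in the legal-paraproduct directions). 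Both routes work; the paper's factoring keeps the bookkeeping a bit lighter in the mixed cases, while your formulation makes the inductive/iterative structure (inner commutator first, then outer) more transparent, which is closer in spirit to how the paper later handles $k\ge 3$ in Theorem~\ref{thm:iteratedShift}.
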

Before the proof, we make a small remark. 
We can begin with commutator $[b, \bS^\seqm].$ However, in this case, $b$ is in the little BMO space. In \cite{Li2018Bloom} this is  proved for bi-parameter operators, i.e. the case $m = 2.$ The method used there can be applied to the multi-parameter case. Hence, the result of \cite{Li2018Bloom} regarding the first order shift case extends to the multi-parameter framework.  We omit the details.
\begin{proof}
We say that the number of parameters in $\bI_1$ is $n.$
We begin by expanding appearing products in all of the parameters. Hence, we have
\begin{align*}
[\bS^{\seq{u}}[b, \bS^{\seq{v}}]]f &=\bS^{\seq{u}}(b \bS^{\seq{v}}f) - \bS^{\seq{u}}  \bS^{\seq{v}}(bf) \\
&\quad-  b \bS^{\seq{u}}  \bS^{\seq{v}}f + \bS^{\seq{v}} (b \bS^{\seq{u}}  f) \\
 &= \sum_{\substack{i_{\seq{u}} \in \ksulku{1,2,3}^{n}\\ i_{\seq{v}} \in \ksulku{1,2,3}^{m -n}}} \bS^{\seq{u}}(A_{i_{\seq{u}}}^{\seq{u}} A_{i_{\seq{v}}}^{\seq{v}} (b, \bS^{\seq{v}}f))  -\bS^{\seq{u}}  \bS^{\seq{v}}(A_{i_{\seq{u}}}^{\seq{u}} A_{i_{\seq{v}}}^{\seq{v}} (b,f)) \\ 
&\hspace{4em} - A_{i_{\seq{u}}}^{\seq{u}} A_{i_{\seq{v}}}^{\seq{v}}(b, \bS^{\seq{u}}  \bS^{\seq{v}}f) + \bS^{\seq{v}} (A_{i_{\seq{u}}}^{\seq{u}} A_{i_{\seq{v}}}^{\seq{v}}(b, \bS^{\seq{u}}  f)).
\end{align*}

Now, if $i_{\seq{u}} \neq \ksulku{3}^n$ and $i_{\seq{v}} \neq \ksulku{3}^{m-n},$ then each individual term of is bounded by combining boundedness of the multi-parameter shifts with Lemma \ref{lem:paraBound}.
Hence, it is enough to consider terms in the following sums 
\begin{equation}\label{eq:shiftIllegals}
\sum_{\substack{i_{\seq{u}} = \ksulku{3}^{n}\\ i_{\seq{v}} \in \ksulku{1,2,3}^{m -n} \setminus \ksulku{3}^{m-n}}} + \sum_{\substack{i_{\seq{u}} \in \ksulku{1,2,3}^{n} \setminus \ksulku{3}^{n} \\ i_{\seq{v}} = \ksulku{3}^{m-n}}} + \sum_{\substack{i_{\seq{u}} = \ksulku{3}^{n}\\ i_{\seq{v}} = \ksulku{3}^{m-n}}}.
\end{equation}

The terms in the first two sums are similar. Hence, considering the first sum, we are essentially handling the second one simultaneously and we choose to deal with the first one.

Fix $i_{\seq{u}} = \ksulku{3}^n,$  $i_{\seq{v}} \in \ksulku{1,2,3}^{m -n} \setminus \ksulku{3}^{m-n}.$  $\, $ We pair  $\bS^{\seq{u}}(A_{i_{\seq{u}}}^{\seq{u}} A_{i_{\seq{v}}}^{\seq{v}} (b, \bS^{\seq{v}}f))$ with
$A_{i_{\seq{u}}}^{\seq{u}} A_{i_{\seq{v}}}^{\seq{v}}(b, \bS^{\seq{u}}  \bS^{\seq{v}}f)$ and   $\bS^{\seq{v}} (A_{i_{\seq{u}}}^{\seq{u}} A_{i_{\seq{v}}}^{\seq{v}}(b, \bS^{\seq{u}}  f))$ with $\bS^{\seq{u}} \bS^{\seq{v}}(A_{i_{\seq{u}}}^{\seq{u}} A_{i_{\seq{v}}}^{\seq{v}} (b,f)).$ It is enough to study $$A_{i_{\seq{u}}}^{\seq{u}} A_{i_{\seq{v}}}^{\seq{v}} (b, \bS^{\seq{u}}f) -  \bS^{\seq{u}}(A_{i_{\seq{u}}}^{\seq{u}} A_{i_{\seq{v}}}^{\seq{v}} (b,f)),$$ since $\bS^{\seq{u}} \bS^{\seq{v}} = \bS^{\seq{v}} \bS^{\seq{u}} $ and $\bS^{\seq{v}}$ is bounded.

We remark that when considering the second sum in \eqref{eq:shiftIllegals}, the terms need to be paired in the other order and then $\bS^{\seq{u}}$ can be left out by similar argument. Generally, we pair the terms so that we get rid of the shifts on the parameters where the paraproduct operator is legal one.

Let us recall the definition of paraproduct operator $A_{i_{\seq{v}}}^\seq{v}.$ 
Let  $\bJ_s = \ksulku{j \leq m-n: i_{v_j}  = s},$ $s = 1,2,3$ and 
 $\seq{v}^1 = (v_j)_{j \in \bJ_1},$  $\seq{v}^2  = (v_j)_{j \in \bJ_2}$ and  $\seq{v}^3 = (v_j)_{j \in \bJ_3}.$ Thus
 \begin{align*}
 A_{i_{\seq{v}}}^\seq{v}(b,f) = \sum_{K_{\seq{v}^{1}},K_{\seq{v}^{2}}, K_{\seq{v}^3}} \Bpair{b, h_{K_{\seq{v}^{1,2}}} \otimes \frac{1_{K_{\seq{v}^3}}}{|K_{\seq{v}^3}|}}_{\seq{v}^1,\seq{v}^2,\seq{v}^3}\Bpair{f,{h}_{K_{\seq{v}^1}} \otimes \avgfu{K_{\seq{v}^2}} \otimes h_{K_{\seq{v}^3}}}_{\seq{v}^1,\seq{v}^2,\seq{v}^3}\\
  \otimes h_{K_{\seq{v}^1}}h_{K_{\seq{v}^1}}  \otimes h_{K_{\seq{v}^2}} \otimes h_{K_{\seq{v}^3}},
 \end{align*}
where $K_{\seq{v}^{1,2}}$ denotes the rectangle $K_{\seq{v}^1}\times K_{\seq{v}^2}.$ Also recall the definition of multi-parameter shift 
\begin{align*}
& \bS^{\seq{u}}f =  \sum_{K_{\seq{u}}}\sum_{I_{\seq{u}}^{(\sseq{k}{u})} = K_{\seq{u}}} \sum_{J_{\seq{u}}^{(\sseq{l}{u})} = K_{\seq{u}}} a_{K_{\seq{u}}, I_{\seq{u}}, J_{\seq{u}}} \pair{f, h_{I_\seq{u}}}_{\seq{u}} \otimes h_{J_{\seq{u}}}.
\end{align*}
Hence, we have
\begin{align}\label{eq:oneparamAddition}
&A_{i_{\seq{u}}}^{\seq{u}} A_{i_{\seq{v}}}^{\seq{v}} (b, \bS^{\seq{u}}f) -  \bS^{\seq{u}}(A_{i_{\seq{u}}}^{\seq{u}} A_{i_{\seq{v}}}^{\seq{v}} (b,f))\nonumber \\
&=\sum_{\substack{K_{\seq{v}^{1}},K_{\seq{v}^{2}}, K_{\seq{v}^3} \\
 K_\seq{u}}} \sum_{I_{\seq{u}}^{(\sseq{k}{u})} = K_{\seq{u}}} \sum_{J_{\seq{u}}^{(\sseq{l}{u})} = K_{\seq{u}}} a_{K_{\seq{u}}, I_{\seq{u}}, J_{\seq{u}}}\nonumber  \\
 &\qquad\times\bigbrac{\Bpair{b, h_{K_{\seq{v}^{1,2}}} \otimes \frac{1_{K_{\seq{v}^3}}}{|K_{\seq{v}^3}|} \otimes \frac{1_{J_{\seq{u}}}}{|J_{\seq{u}}|}} - \Bpair{b, h_{K_{\seq{v}^{1,2}}} \otimes \frac{1_{K_{\seq{v}^3}}}{|K_{\seq{v}^3}|} \otimes \frac{1_{I_{\seq{u}}}}{|I_{\seq{u}}|}}} \\
 &\qquad\times \Bpair{f,{h}_{K_{\seq{v}^1}} \otimes \avgfu{K_{\seq{v}^2}} \otimes h_{K_{\seq{v}^3}} \otimes h_{I_{\seq{u}}}}  {h}_{K_{\seq{v}^1}}{h}_{K_{\seq{v}^1}}  \otimes h_{K_{\seq{v}^2}} \otimes h_{K_{\seq{v}^3}} \otimes h_{J_{\seq{u}}}, \nonumber
\end{align}
where  $\seq{v}^1,$  $\seq{v}^2,$  $\seq{v}^3,$ and $K_{\seq{v}^{1,2}}$ is defined as above.

By Lemma \ref{lem:diffofavgs}, we can write \eqref{eq:oneparamAddition} as
\begin{align*}
&\sum_{\substack{K_{\seq{v}^{1}},K_{\seq{v}^{2}}, K_{\seq{v}^3} \\
 K_\seq{u}}} \sum_{I_{\seq{u}}^{(\sseq{k}{u})} = K_{\seq{u}}} \sum_{J_{\seq{u}}^{(\sseq{l}{u})} = K_{\seq{u}}} a_{K_{\seq{u}}, I_{\seq{u}}, J_{\seq{u}}}\\
  &\qquad\times\Bbrac{\sum_{j = 1}^n \Bsulk{ \sum_{t = 1}^{l_{u_j}} \pair{\Delta_{J_{u_j}^{(t)}}^{u_j} b_{K_{\seq{v}}}}_{J_{u_j} \times Q_{\seq{u}'_j}} 
 - \sum_{s = 1}^{k_{u_j}} \pair{\Delta_{I_{u_j}^{(s)}}^{u_j} b_{K_{\seq{v}}}}_{I_{u_j} \times Q_{\seq{u}'_j}}}}\\
 &\qquad\times \Bpair{f,{h}_{K_{\seq{v}^1}} \otimes \avgfu{K_{\seq{v}^2}} \otimes h_{K_{\seq{v}^3}} \otimes h_{I_{\seq{u}}}}  {h}_{K_{\seq{v}^1}}{h}_{K_{\seq{v}^1}}  \otimes h_{K_{\seq{v}^2}} \otimes h_{K_{\seq{v}^3}} \otimes h_{J_{\seq{u}}},
\end{align*}
where $b_{K_{\seq{v}}} = \pair{b, h_{K_{\seq{v}^{1,2}}} \otimes 1_{K_{\seq{v}^3}}/|K_{\seq{v}^3}|}_{\seq{v}^{1}, \seq{v}^2, \seq{v}^3},$ and	 $Q_{\seq{u}'_j} = I_{(u_i)_{1 \leq i < j}} \times J_{(u_i)_{j < i \leq n }}.$ 

 Now these terms are expanded to a desired form, i.e.  there is a cancellative Haar function on some parameter in $\bI_1$ and $\bI_2$ paired with the function $b.$  For example,  the first term in the above pair with $j = 2$ equals to 
\begin{align*}\label{eq:shiftOneParamExpanded}
&\sum_{t = 1}^{l_{u_2}}\sum_{\substack{K_{\seq{v}^{1}},K_{\seq{v}^{2}}, K_{\seq{v}^3} \nonumber\\
 K_\seq{u}}} \sum_{I_{\seq{u}}^{(\sseq{k}{u})} = K_{\seq{u}}} \sum_{J_{\seq{u}}^{(\sseq{l}{u})} = K_{\seq{u}}} a_{K_{\seq{u}}, I_{\seq{u}}, J_{\seq{u}}} \pair{h_{J_{u_2}^{(t)}}}_{J_{u_2}} \\
 &\hspace{5em}\times\Bpair{b, h_{K_{\seq{v}^{1,2}}} \otimes \frac{1_{K_{\seq{v}^3}}}{|K_{\seq{v}^3}|} \otimes \avgfu{I_{u_1}}\otimes h_{J_{u_2}^{(t)}} \otimes  \avgfu{J_{\seq{u}'}} } \\
 &\hspace{5em}\times \Bpair{f, {h}_{K_{\seq{v}^1}} \otimes \avgfu{K_{\seq{v}^2}} \otimes h_{K_{\seq{v}^3}} \otimes h_{I_{\seq{u}}}} {h}_{K_{\seq{v}^1}}{h}_{K_{\seq{v}^1}}  \otimes h_{K_{\seq{v}^2}} \otimes h_{K_{\seq{v}^3}} \otimes h_{J_{\seq{u}}}, \nonumber
\end{align*}
where $\seq{u}' = (u_j)_{j=3}^{n}$ and $|\pair{h_{J_{u_2}^{(t)}}}_{J_{u_2}}| = |J_{u_2}^{(t)}|^{-\pinv{2}}.$ Hence, these terms are bounded by Lemma \ref{lem:generalEstimation}.

Let us also expand the last term of \eqref{eq:shiftIllegals}. Here we can not do any reductions and we sum everything together. Hence, the term equals to
\begin{equation}\label{eq:shiftFinalTerm}
\begin{split}
&\sum_{K_{\seq{u}}, K_\seq{v}}  \sum_{\substack{I_{\seq{u}}^{(\sseq{k}{u})} = K_{\seq{u}} \\ J_{\seq{u}}^{(\sseq{l}{u})} = K_{\seq{u}}}} \sum_{\substack{I_{\seq{v}}^{(\sseq{k}{v})} = K_{\seq{v}} \\ J_{\seq{v}}^{(\sseq{l}{v})} = K_{\seq{v}}}} a_{K_{\seq{v}}, I_{\seq{v}}, J_{\seq{v}}}
a_{K_{\seq{u}}, I_{\seq{u}}, J_{\seq{u}}} \bbrac{\pair{b}_{I_\seq{u} \times J_\seq{v}} - \pair{b}_{I_\seq{u} \times I_\seq{v}} - \pair{b}_{J_\seq{u} \times J_\seq{v}} + \pair{b}_{J_\seq{u} \times I_\seq{v}}} \\
 &\hspace{7em}\times \pair{f, h_{I_{\seq{u}}} \otimes h_{I_{\seq{v}}}} h_{J_{\seq{u}}} \otimes h_{J_{\seq{v}}}.
\end{split}
\end{equation}
Here we proceed similarly, as with the previous terms, but now expanding on the both parameters sets  $\bI_1$ and $\bI_2.$ 

First, we apply Lemma \ref{lem:diffofavgs} in the parameters $\bI_1.$ Hence, we have 
\begin{align}\label{eq:SLfirstsplit}
&\pair{b}_{I_\seq{u} \times J_\seq{v}} - \pair{b}_{I_\seq{u} \times I_\seq{v}} - \pair{b}_{J_\seq{u} \times J_\seq{v}} + \pair{b}_{J_\seq{u} \times I_\seq{v}} \nonumber \\
&=\sulku{\pair{b}_{J_\seq{u} \times I_\seq{v}} - \pair{b}_{I_\seq{u} \times I_\seq{v}}} - \sulku{ \pair{b}_{J_\seq{u} \times J_\seq{v}} - \pair{b}_{I_\seq{u} \times J_\seq{v}} } \nonumber \\
&=\sum_{j_1 = 1}^n\Bsulk{{ \sum_{t_1 = 1}^{l_{u_{j_1}}} \pair{\Delta_{J_{u_{j_1}}^{(t_1)}}^{u_{j_1}} b_{I_{\seq{v}}}}_{J_{u_{j_1}} \times Q_{\seq{u}'_{j_1}}} 
 - \sum_{s_1 = 1}^{k_{u_{j_1}}} \pair{\Delta_{I_{u_{j_1}}^{(s_1)}}^{u_{j_1}} b_{I_{\seq{v}}} }_{I_{u_{j_1}} \times Q_{\seq{u}'_{j_1}}}}} \\
 &\quad - \sum_{{j_1} = 1}^n\Bsulk{ \sum_{t_1 = 1}^{l_{u_{j_1}}} \pair{\Delta_{J_{u_{j_1}}^{(t_1)}}^{u_{j_1}} b_{J_{\seq{v}}}}_{J_{u_{j_1}} \times Q_{\seq{u}'_{j_1}}} 
 - \sum_{s_1 = 1}^{k_{u_{j_1}}} \pair{\Delta_{I_{u_{j_1}}^{(s_1)}}^{u_{j_1}} b_{J_{\seq{v}}}}_{I_{u_{j_1}} \times Q_{\seq{u}'_{j_1}}}}, \nonumber
\end{align}
where $b_{I_\seq{v}} = \pair{b}_{I_\seq{v}, \seq{v}},$ $b_{J_\seq{v}} = \pair{b}_{J_\seq{v}, \seq{v}}$ and $Q_{\seq{u}_{j_1}'} = I_{(u_j)_{1 \leq j < j_1}} \times J_{(u_j)_{j_1 <  j \leq n}}.$
Then we pair these terms in the other order and apply Lemma \ref{lem:diffofavgs} in the parameters $\bI_2.$ Then, for example, the pair of the first and third term on the right-hand side of \eqref{eq:SLfirstsplit} equals to 
\begin{align}
&\sum_{j_1 = 1}^n\Bsulk{ \sum_{t_1 = 1}^{l_{u_{j_1}}} \pair{\Delta_{J_{u_{j_1}}^{(t_1)}}^{u_{j_1}} b_{I_{\seq{v}}}}_{J_{u_{j_1}} \times Q_{\seq{u}'_{j_1}}} 
 - \pair{\Delta_{J_{u_{j_1}}^{(t_1)}}^{u_{j_1}} b_{J_{\seq{v}}}}_{J_{u_{j_1}} \times Q_{\seq{u}'_{j_1}}} } \nonumber \\
 &= \sum_{{j_1} = 1}^n \sum_{t_1 = 1}^{l_{u_{j_1}}} \pair{\Delta_{J_{u_{j_1}}^{(t_1)}}^{u_{j_1}} (\pair{b}_{I_{\seq{v}}, \seq{v}} - \pair{b}_{J_{\seq{v}}, \seq{v}} )}_{J_{u_{j_1}} \times Q_{\seq{u}'_{j_1}}} \nonumber \\
 & =  \sum_{j_1 = 1}^n \sum_{t_{1} = 1}^{l_{u_{j_1}}} \sum_{j_2 = 1}^{m - n} \Bsulk{\sum_{s_2 = 1}^{k_{v_{j_2}}} \pair{\Delta_{J_{u_{j_1}}^{(t_{1})} \times I_{v_{j_2}}^{(s_2)}}^{u_{j_1}, v_{j_2}} b}_{J_{u_{j_1}} \times I_{v_{j_2}} \times  Q_{\seq{u}'_{j_1}} \times Q_{\seq{v}'_{j_2}}} \label{eq:SLE}\\
 &\hspace{8em}-\sum_{t_{2} = 1}^{l_{v_{j_2}}} \pair{\Delta_{J_{u_{j_1}}^{(t_{1})} \times J_{v_{j_2}}^{(t_{2})}}^{u_{j_1}, v_{j_2}} b}_{J_{u_{j_1}} \times J_{v_{j_2}}  \times  Q_{\seq{u}'_{j_1}} \times Q_{\seq{v}'_{j_2}}} }, \nonumber
\end{align}
where $Q_{\seq{v}'_{j_2}} = I_{(v_j)_{1 \leq j < j_2}} \times J_{(v_j)_{j_2 <  j \leq m - n}}.$

Thus, for example,  the first term on the right-hand side of  \eqref{eq:SLE} with $j_1 = 2$ and $j_2 = 2$ related to \eqref{eq:shiftFinalTerm} equals to
\begin{align}\label{eq:shiftLastTerm}
\sum_{t_1 = 1}^{l_{u_2}} \sum_{s_2 = 1}^{k_{v_2}} \sum_{K_{\seq{u}}, K_\seq{v}}  \sum_{\substack{I_{\seq{u}}^{(\sseq{k}{u})} = K_{\seq{u}} \\ J_{\seq{u}}^{(\sseq{l}{u})} = K_{\seq{u}}}} \sum_{\substack{I_{\seq{v}}^{(\sseq{k}{v})} = K_{\seq{v}} \\ J_{\seq{v}}^{(\sseq{l}{v})} = K_{\seq{v}}}} &a_{K_{\seq{v}}, I_{\seq{v}}, J_{\seq{v}}} a_{K_{\seq{u}}, I_{\seq{u}}, J_{\seq{u}}}  \pair{h_{J_{u_{2}}^{(t_1)}}}_{J_{u_{2}}} \pair{h_{I_{v_{2}}^{(s_2)}}}_{I_{v_{2}}} \nonumber \\
&\times\Bpair{b, \avgfu{I_{u_1}} \otimes h_{J_{u_{2}}^{(t_1)}} \otimes \avgfu{J_{\seq{u}'}} \otimes\avgfu{I_{v_1}} \otimes h_{I_{v_{2}}^{(s_2)}} \otimes \avgfu{J_{\seq{v}'} } } \\
&\times \pair{f, h_{I_{\seq{u}}} \otimes h_{I_{\seq{v}}}} h_{J_{\seq{u}}} \otimes h_{J_{\seq{v}}},\nonumber
\end{align}
where $\seq{u}' = (u_j)_{j = 3}^n$ and $\seq{v}' = (v_j)_{j = 3}^{m-n}.$
Hence, the boundedness of these terms follows from Lemma \ref{lem:generalEstimation}.
\end{proof}

\begin{thm}\label{thm:iteratedShift}
Let $k \leq m$ be an integer and let $\nu = \mu^\pinv{p} \lambda^{-\pinv{p}},$ where $\mu, \lambda \in A_p(\R^{d_1} \times \dots \times \R^{d_m}),$ and $1<p<\infty.$
For all partitions  $\bI = \ksulku{\bI_i: i\leq k}$  of $\colM$ we have
$$
\norm{[\bS^{\seq{v}^1},[\bS^{\seq{v}^2},\dots [b,\bS^{\seq{v}^k}]]]}{L^p(\mu) \rightarrow L^{p}(\lambda)}
\lesssim \normbmo{b}{\bI}{\nu},
$$
where $\seq{v}^i = (v^i_j)_{v^i_j\in \bI_i}.$
\end{thm}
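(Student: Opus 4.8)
The plan is to run the argument of the preceding theorem with $k$ shifts in place of two; the only genuinely new ingredient is the combinatorics, since the analytic estimates needed are already in place. Because the shifts $\bS^{\seq{v}^1},\dots,\bS^{\seq{v}^k}$ act on mutually disjoint blocks of parameters they commute, so expanding the iterated commutator produces
$$
[\bS^{\seq{v}^1},[\bS^{\seq{v}^2},\dots[b,\bS^{\seq{v}^k}]]]f = \sum_{A \subseteq \{1,\dots,k\}} \epsilon_A\, \bS^{\seq{v}^A}\bigl(b \cdot \bS^{\seq{v}^{A^c}}f\bigr),
$$
where $\bS^{\seq{v}^A} := \prod_{i \in A}\bS^{\seq{v}^i}$ and $\epsilon_A \in \{-1,1\}$. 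In each summand I would then expand the pointwise product $b \cdot \bS^{\seq{v}^{A^c}}f$ by the full $m$-parameter paraproduct decomposition, organised blockwise: $bg = \sum_{\seq{i}} A_{\seq{i}}^{\seqm}(b,g)$ with $\seq{i} = (i_{\seq{v}^1},\dots,i_{\seq{v}^k})$, $i_{\seq{v}^j}\in\{1,2,3\}^{\#\bI_j}$.

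For the summands in which $i_{\seq{v}^j}\neq\{3\}^{\#\bI_j}$ for every block $j$ (the paraproduct is ``legal'' on each block), the argument is immediate: $A_{\seq{i}}^{\seqm}(b,\cdot)$ is bounded $L^p(\mu)\to L^p(\lambda)$ by Lemma \ref{lem:paraBound}, the set $\seq{u}$ of legal parameters meets every $\bI_j$, and composing on the two sides with the shifts (bounded on $L^p(\mu)$ and $L^p(\lambda)$ respectively, cf.\ Lemma \ref{lem:squareSim}) one bounds the term by $\normBMO{b}{\seq{u}}{\nu}\norm{f}{L^p(\mu)}$; since $\seq{u}$ contains a transversal $\seq{w}$ meeting every $\bI_j$, Lemma \ref{lem:lBMO} yields $\normBMO{b}{\seq{u}}{\nu}\lesssim\normBMO{b}{\seq{w}}{\nu}\le\normbmo{b}{\bI}{\nu}$.

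The remaining summands are those with nonempty ``bad'' set $B := \{\,j : i_{\seq{v}^j} = \{3\}^{\#\bI_j}\,\}$. Here I would fix the paraproduct data on the legal blocks $B^c$ and group together the $2^{|B|}$ summands that differ only in the side of $b$ on which each shift $\bS^{\seq{v}^j}$, $j\in B$, appears. On a bad block the illegal paraproduct produces an average of $b$ over the $\bI_j$-variables at the cube dictated by the adjacent shift; pairing the two placements of $\bS^{\seq{v}^j}$ turns this into a difference $\pair{b}_{J_{\seq{v}^j}} - \pair{b}_{I_{\seq{v}^j}}$ with $I_{\seq{v}^j}^{(\sseqi{k}{v}{j})} = K_{\seq{v}^j} = J_{\seq{v}^j}^{(\sseqi{l}{v}{j})}$, exactly as in \eqref{eq:oneparamAddition}--\eqref{eq:SLfirstsplit}, which is then expanded by Lemma \ref{lem:diffofavgs}. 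Carrying this out over all $j\in B$ (the telescopings act on disjoint parameters and do not interfere) puts, in each resulting summand, a cancellative Haar function against $b$ on one parameter of each bad block ($I$-flavoured if the adjacent shift is on the $f$-side, $J$-flavoured otherwise) and averages of $b$ on the other parameters there. Together with the legal paraproducts on $B^c$ --- which leave a cancellative Haar against $b$ on at least one parameter of each such block --- and the $k$ shift kernels, whose size bounds factor over parameters and multiply to the required $\prod_j |I_{\seq{v}^j}|^{1/2}|J_{\seq{v}^j}|^{1/2}/|K_{\seq{v}^j}|$, every summand is of the form of the general term \eqref{eq:generalterm}, the five blocks there being suitable unions of our blocks; Lemma \ref{lem:generalEstimation} then bounds it by $\normBMO{b}{\seq{u}}{\nu}\norm{f}{L^p(\mu)}$ with $\seq{u}$ meeting every $\bI_j$, hence by $\normbmo{b}{\bI}{\nu}\norm{f}{L^p(\mu)}$.

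The hard part is entirely organisational: setting up workable notation for the $2^k$-fold expansion, verifying that the pairing over the bad blocks produces precisely the differences of averages fed into Lemma \ref{lem:diffofavgs} even when several blocks are bad simultaneously, and checking that the output of these nested telescopings --- together with the legal paraproducts and the composed shift kernels --- really does collapse into the single general term \eqref{eq:generalterm} rather than into a family of new terms. No analytic estimate beyond Lemmas \ref{lem:paraBound}, \ref{lem:diffofavgs} and \ref{lem:generalEstimation} (and the weighted boundedness of shifts) is needed, which is exactly why it was convenient to treat the case $k=2$ separately first.
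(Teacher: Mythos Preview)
Your proposal is correct and follows essentially the same approach as the paper: the paper carries out the case $k=3$ explicitly (eight terms, full paraproduct expansion, Lemma \ref{lem:paraBound} for the fully legal pieces, grouping by the bad blocks and telescoping via Lemma \ref{lem:diffofavgs}, then Lemma \ref{lem:generalEstimation}), and remarks that the general case is analogous. Your write-up is slightly more abstract in that you organise the grouping by the bad set $B$ for arbitrary $k$ rather than displaying a particular $k$, but the ingredients, the pairing strategy, and the analytic tools invoked are identical.
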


\begin{proof}
We consider here the case $k=3.$ The aim is to show that the strategy and techniques used in the case $k = 2$ work here also. The general case follows similarly. 

Let $n_1 + n_2 + n_3 = m$ and say the number of parameters in $\seq{v}^1$ is $n_1$ and so on.
By definition 
\begin{align*}
&[\bS^{\seq{v}^1},[\bS^{\seq{v}^2},[b,\bS^{\seq{v}^3}]]]f \\
&= \bS^{\seq{v}^1}\bS^{\seq{v}^2} (b \bS^{\seq{v}^3} f) - \bS^{\seq{v}^1}\bS^{\seq{v}^2} \bS^{\seq{v}^3} (bf)- \bS^{\seq{v}^1} (b\bS^{\seq{v}^3}\bS^{\seq{v}^2}f) + \bS^{\seq{v}^1} \bS^{\seq{v}^3} (b \bS^{\seq{v}^2} f) \\
&\qquad- \bS^{\seq{v}^2}(b\bS^{\seq{v}^3}\bS^{\seq{v}^1}f) + \bS^{\seq{v}^2}\bS^{\seq{v}^3}(b\bS^{\seq{v}^1}f) + b\bS^{\seq{v}^3}\bS^{\seq{v}^2}\bS^{\seq{v}^1}f -\bS^{\seq{v}^3}(b\bS^{\seq{v}^2}\bS^{\seq{v}^1}f)
\end{align*}
As in the case $k=2,$ we expand in all of the parameters
$$
[\bS^{\seq{v}^1},[\bS^{\seq{v}^2},[b,\bS^{\seq{v}^3}]]]f = \sum_{\substack{\seq{i}_1 \in \ksulku{1,2,3}^{n_1} \\ \seq{i}_2 \in \ksulku{1,2,3}^{n_2} \\ \seq{i}_3 \in \ksulku{1,2,3}^{n_3}}} \sum_{j = 1}^8 \sigma_{j, \seq{i}_1, \seq{i}_2, \seq{i}_3},
$$
where e.g.
$$
\sigma_{1, \seq{i}_1, \seq{i}_2, \seq{i}_3} = \bS^{\seq{v}^1}\bS^{\seq{v}^2} (A^{\seq{v}^1}_{\seq{i}_1} A^{\seq{v}^2}_{\seq{i}_2} A^{\seq{v}^3}_{\seq{i}_3} (b, \bS^{\seq{v}^3} f)).
$$
  Again, Lemma \ref{lem:paraBound} combined with boundedness of the shifts yields that each individual term of  $$\sum_{\substack{\seq{i}_1 \in \ksulku{1,2,3}^{n_1} \setminus \ksulku{3}^{n_1} \\ \seq{i}_2 \in \ksulku{1,2,3}^{n_2} \setminus \ksulku{3}^{n_2} \\ \seq{i}_3 \in \ksulku{1,2,3}^{n_3}  \setminus \ksulku{3}^{n_3}}} \sum_{j = 1}^8 \sigma_{j, \seq{i}_1, \seq{i}_2, \seq{i}_3}$$
is directly bounded. Hence, we need to consider terms in the sum 
\begin{align}\label{eq:iterSums}
&\sum_{\substack{\seq{i}_1 =  \ksulku{3}^{n_1} \\ \seq{i}_2 \in \ksulku{1,2,3}^{n_2} \setminus \ksulku{3}^{n_2} \\ \seq{i}_3 \in \ksulku{1,2,3}^{n_3}  \setminus \ksulku{3}^{n_3}}} + \sum_{\substack{\seq{i}_1 \in \ksulku{1,2,3}^{n_1} \setminus \ksulku{3}^{n_1} \\ \seq{i}_2 = \ksulku{3}^{n_2} \\ \seq{i}_3 \in \ksulku{1,2,3}^{n_3}  \setminus \ksulku{3}^{n_3}}} + \sum_{\substack{\seq{i}_1 \in \ksulku{1,2,3}^{n_1} \setminus \ksulku{3}^{n_1} \\ \seq{i}_2 \in \ksulku{1,2,3}^{n_2} \setminus \ksulku{3}^{n_2} \\ \seq{i}_3 =  \ksulku{3}^{n_3}}} \\
&+\sum_{\substack{\seq{i}_1 =  \ksulku{3}^{n_1} \\ \seq{i}_2 = \ksulku{3}^{n_2} \\ \seq{i}_3 \in \ksulku{1,2,3}^{n_3}  \setminus \ksulku{3}^{n_3}}}  + \sum_{\substack{\seq{i}_1 = \ksulku{3}^{n_1} \\ \seq{i}_2 \in \ksulku{1,2,3}^{n_2} \setminus \ksulku{3}^{n_2} \\ \seq{i}_3 =  \ksulku{3}^{n_3}}} + \sum_{\substack{\seq{i}_1 \in \ksulku{1,2,3}^{n_1} \setminus \ksulku{3}^{n_1} \\ \seq{i}_2 = \ksulku{3}^{n_2} \\ \seq{i}_3 = \ksulku{3}^{n_3}}} +  \sum_{\substack{\seq{i}_1 = \ksulku{3}^{n_1} \\ \seq{i}_2 = \ksulku{3}^{n_2} \\ \seq{i}_3 =\ksulku{3}^{n_3}}}.\nonumber
\end{align}
In the first three sums we can reduce to cases of one shift by pairing two terms.  
For example,  let  $\seq{i}_1 =  \ksulku{3}^{n_1},$ $\seq{i}_2 \in \ksulku{1,2,3}^{n_2} \setminus \ksulku{3}^{n_2},$ and $\seq{i}_3 \in  \ksulku{1,2,3}^{n_3}  \setminus \ksulku{3}^{n_3}$, then the first pair of terms is 
$$
\bS^{\seq{v}^1}\bS^{\seq{v}^2} (A^{\seq{v}^1}_{\seq{i}_1} A^{\seq{v}^2}_{\seq{i}_2} A^{\seq{v}^3}_{\seq{i}_3}(b, \bS^{\seq{v}^3} f))- \bS^{\seq{v}^2}(A^{\seq{v}^1}_{\seq{i}_1} A^{\seq{v}^2}_{\seq{i}_2} A^{\seq{v}^3}_{\seq{i}_3}(b,\bS^{\seq{v}^3}\bS^{\seq{v}^1}f)).
$$
Here it is enough to study
$$
\bS^{\seq{v}^1} (A^{\seq{v}^1}_{\seq{i}_1} A^{\seq{v}^2}_{\seq{i}_2} A^{\seq{v}^3}_{\seq{i}_3}(b,  f))- (A^{\seq{v}^1}_{\seq{i}_1} A^{\seq{v}^2}_{\seq{i}_2} A^{\seq{v}^3}_{\seq{i}_3}(b,\bS^{\seq{v}^1}f))
$$
since $\bS^{\seq{v}^2}$ and $\bS^{\seq{v}^3}$ are bounded and order of the shifts is interchangeable.

Then the following three sums reduces to cases of two shifts by summing four terms. For example, let $\seq{i}_1 =  \ksulku{3}^{n_1},$ $\seq{i}_2 = \ksulku{3}^{n_2},$ and $\seq{i}_3 \in  \ksulku{1,2,3}^{n_3}  \setminus \ksulku{3}^{n_3}$, then the first sum of four terms is
\begin{align*}
\bS^{\seq{v}^1}\bS^{\seq{v}^2} (A^{\seq{v}^1}_{\seq{i}_1} A^{\seq{v}^2}_{\seq{i}_2} A^{\seq{v}^3}_{\seq{i}_3}(b, \bS^{\seq{v}^3} f)) - \bS^{\seq{v}^1} (A^{\seq{v}^1}_{\seq{i}_1} A^{\seq{v}^2}_{\seq{i}_2} A^{\seq{v}^3}_{\seq{i}_3}(b, \bS^{\seq{v}^3}\bS^{\seq{v}^2}f)) \\
- \bS^{\seq{v}^2}(A^{\seq{v}^1}_{\seq{i}_1} A^{\seq{v}^2}_{\seq{i}_2} A^{\seq{v}^3}_{\seq{i}_3}(b,\bS^{\seq{v}^3}\bS^{\seq{v}^1}f)) + A^{\seq{v}^1}_{\seq{i}_1} A^{\seq{v}^2}_{\seq{i}_2} A^{\seq{v}^3}_{\seq{i}_3}(b,\bS^{\seq{v}^3}\bS^{\seq{v}^2}\bS^{\seq{v}^1}f).
\end{align*}
By similar arguments as previously, it is enough to consider 
\begin{align*}
\bS^{\seq{v}^1}\bS^{\seq{v}^2} (A^{\seq{v}^1}_{\seq{i}_1} A^{\seq{v}^2}_{\seq{i}_2} A^{\seq{v}^3}_{\seq{i}_3}(b , f)) - \bS^{\seq{v}^1} (A^{\seq{v}^1}_{\seq{i}_1} A^{\seq{v}^2}_{\seq{i}_2} A^{\seq{v}^3}_{\seq{i}_3}(b,\bS^{\seq{v}^2}f)) \\
- \bS^{\seq{v}^2}(A^{\seq{v}^1}_{\seq{i}_1} A^{\seq{v}^2}_{\seq{i}_2} A^{\seq{v}^3}_{\seq{i}_3}(b,\bS^{\seq{v}^1}f)) + A^{\seq{v}^1}_{\seq{i}_1} A^{\seq{v}^2}_{\seq{i}_2} A^{\seq{v}^3}_{\seq{i}_3}(b, \bS^{\seq{v}^2}\bS^{\seq{v}^1}f).
\end{align*}

Notice that these types of terms are similar to terms of the case $k=2.$ In the latter example,  there is an additional legal paraproducts in the parameters  $\bI_{3}$ compared to the last term in the previous proof. However, we have already taken this account in the general term \eqref{eq:generalterm} and the boundedness follows by similar expansion as in the case $k=2.$

In the last term in \eqref{eq:iterSums} we need to expand
\begin{align}\label{eq:SIdiffavg}
 &\pair{b}_{I_{\seq{v}^1} \times I_{\seq{v}^2} \times J_{\seq{v}^3}} - \pair{b}_{I_{\seq{v}^1} \times I_{\seq{v}^2} \times I_{\seq{v}^3}} - \pair{b}_{I_{\seq{v}^1} \times J_{\seq{v}^2} \times J_{\seq{v}^3}} + \pair{b}_{I_{\seq{v}^1} \times J_{\seq{v}^2} \times I_{\seq{v}^3}} \nonumber\\
& - \pair{b}_{J_{\seq{v}^1} \times I_{\seq{v}^2} \times J_{\seq{v}^3}} +\pair{b}_{J_{\seq{v}^1} \times I_{\seq{v}^2} \times I_{\seq{v}^3}}  + \pair{b}_{J_{\seq{v}^1} \times J_{\seq{v}^2} \times J_{\seq{v}^3}} -  \pair{b}_{J_{\seq{v}^1} \times J_{\seq{v}^2} \times I_{\seq{v}^3}}
\end{align}
Now, we apply Lemma \ref{lem:diffofavgs} three times. First, we apply the lemma in parameters $\bI_1$ -- e.g. the sum of the first and the fifth terms  equals to
$$
\sum_{j_1 = 1}^{n_1} \Bsulk{\sum_{s_1 = 1}^{k_{v_{j_1}^1}}\pair{\Delta_{I_{v^1_{j_1}}^{(s_1)}} b}_{I_{v^1_{j_1}} \times Q_{\seq{v}^{ 1\prime}_{j_1}} \times I_{\seq{v}^2} \times J_{\seq{v}^3}} -\sum_{t_1 = 1}^{l_{v_{j_1}^1}} \pair{\Delta_{J_{v^1_{j_1}}^{(t_1)}} b}_{J_{v^1_{j_1}} \times Q_{\seq{v}^{ 1\prime}_{j_1}} \times I_{\seq{v}^2} \times J_{\seq{v}^3}} },
$$ where $Q_{\seq{v}^{ 1\prime}_{j_1}} = I_{(v^1_u)_{1\leq u < j_1}} \times J_{(v^1_u)_{j_1< u \leq n_1}}.$ 
Then we switch pairs such that we can  apply the lemma in parameters $\bI_2.$  For example, the sum of related terms of the first and the third terms in \eqref{eq:SIdiffavg} equals to
\begin{align*}
\sum_{j_1  =1}^{n_1}\sum_{j_2  =1}^{n_2}\sum_{s_1 = 1}^{k_{v_{j_1}^1}}\Bsulk{ &\sum_{s_2 = 1}^{k_{v_{j_2}^2}}
\pair{\Delta_{I_{v^1_{j_1}}^{(s_1)}} \Delta_{I_{v^2_{j_2}}^{(s_2)}} b}_{I_{v^1_{j_1}}\times I_{v^2_{j_2}} \times Q_{\seq{v}^{ 1\prime}_{j_1}} \times Q_{\seq{v}^{2\prime }_{j_2}} \times J_{\seq{v}^3}} \\
&- \sum_{t_2 = 1}^{l_{v_{j_2}^2}}\pair{\Delta_{I_{v^1_{j_1}}^{(s_1)}} \Delta_{J_{v^2_{j_2}}^{(t_2)}} b}_{I_{v^1_{j_1}}\times J_{v^2_{j_2}} \times Q_{\seq{v}^{ 1\prime}_{j_1}} \times Q_{\seq{v}^{ 2\prime}_{j_2}} \times J_{\seq{v}^3}}}.
\end{align*}

Finally, we pair terms such that we apply Lemma \ref{lem:diffofavgs} in parameters $\bI_3$. Hence, for example, the sum of the related terms of the first and the second terms in \eqref{eq:SIdiffavg} equals to
\begin{align}\label{eq:SILT}
\sum_{j_1  =1}^{n_1}\sum_{j_2  =1}^{n_2}\sum_{j_3  =1}^{n_3}\sum_{s_1 = 1}^{k_{v_{j_1}^1}}\sum_{s_2 = 1}^{k_{v_{j_2}^2}}\Bsulk{&\sum_{t_3 = 1}^{l_{v_{j_3}^3}}\pair{\Delta_{I_{v^1_{j_1}}^{(s_1)}} \Delta_{I_{v^2_{j_2}}^{(s_2)}}\Delta_{J_{v^3_{j_3}}^{(t_3)}} b}_{I_{v^1_{j_1}}\times I_{v^2_{j_2}}\times J_{v^3_{j_3}}  \times Q_{\seq{v}^{1\prime }_{j_1}} \times Q_{\seq{v}^{ 2\prime}_{j_2}} \times Q_{\seq{v}^{3\prime }_{j_3}}} \\
-&\sum_{s_3 = 1}^{k_{v_{j_3}^3}} \pair{\Delta_{I_{v^1_{j_1}}^{(s_1)}}  \Delta_{I_{v^2_{j_2}}^{(s_2)}} \Delta_{I_{v^3_{j_3}}^{(s_3)}} b}_{I_{v^1_{j_1}}\times I_{v^2_{j_2}}\times I_{v^3_{j_3}} \times Q_{\seq{v}^{1 \prime}_{j_1}} \times Q_{\seq{v}^{ 2 \prime}_{j_2}} \times Q_{\seq{v}^{ 3\prime}_{j_3}}}}. \nonumber
\end{align}
Now, each appearing term is fully expanded, for example, for fixed  $s_1 , s_2, t_3$ and  $j_1 = 1, j_2= 1, j_3 =1$ the term to be estimated related to the first term in \eqref{eq:SILT} equals to
\begin{align*}
&\sum_{K_{\seq{v}^1},K_{\seq{v}^2},K_{\seq{v}^3}}  \sum_{\substack{I_{\seq{v}^1}^{(k_{\seq{v}^1})} = K_{\seq{v}^1} \\ J_{\seq{v}^1}^{(l_{\seq{v}^1})} = K_{\seq{v}^1}}}\sum_{\substack{I_{\seq{v}^2}^{(k_{\seq{v}^2})} = K_{\seq{v}^2} \\ J_{\seq{v}^2}^{(l_{\seq{v}^2})} = K_{\seq{v}^2}}}\sum_{\substack{I_{\seq{v}^3}^{(k_{\seq{v}^3})} = K_{\seq{v}^3} \\ J_{\seq{v}^3}^{(l_{\seq{v}^3})} = K_{\seq{v}^3}}}  a_{K_{\seq{v}^1}, I_{\seq{v}^1}, J_{\seq{v}^1}} a_{K_{\seq{v}^2}, I_{\seq{v}^2}, J_{\seq{v}^2}}a_{K_{\seq{v}^3}, I_{\seq{v}^3}, J_{\seq{v}^3}}\\
 &\qquad\times\pair{h_{I_{v_{1}^1}^{(s_{1})}}}_{I_{v_{1}^1}} \pair{h_{I_{v_{1}^2}^{(s_{2})}}}_{I_{v_{1}^2}} \pair{
 h_{J_{v_{1}^3}^{(t_{3})}}}_{J_{v_{1}^3}} \Bpair{b, h_{I_{v_{1}^1}^{(s_{1})}  \times I_{v_{1}^2}^{(s_2)} \times J_{v^{3}_{1}}^{(t_3)}} \otimes \avgfu{J_{\seq{v}_1^{1\prime }} \times J_{\seq{v}_1^{2\prime }} \times J_{\seq{v}_1^{ 3 \prime}} }} \\
 &\qquad\times\pair{f, h_{I_{\seq{v}^1} \times I_{\seq{v}^2} \times I_{\seq{v}^3}} }h_{J_{\seq{v}^1} \times J_{\seq{v}^2} \times J_{\seq{v}^3}},
\end{align*}
where ${\seq{v}_1^{j \prime}} = (v_u^j)_{u = 2}^{n_j}$ for $j =1,2,3.$ 
It is easy see that these terms have the form of the general term. Hence, by Lemma \ref{lem:generalEstimation} these terms are bounded with $b \in \bmo^{\bI}$ condition.
\end{proof}

 By the representation theorem of the multi-parameter singular integrals \cite{Ou2017} we get the following result:
\begin{cor}
Let $k \leq m$ be an integer and let $\nu = \mu^\pinv{p} \lambda^{-\pinv{p}},$ where $\mu, \lambda \in A_p(\R^{d_1} \times \dots \times \R^{d_m})$ and  $1<p<\infty.$
For all partitions $\bI = \ksulku{\bI_i: i\leq k}$ of $\colM$ we have
$$
\norm{[T^{\seq{v}^1},[T^{\seq{v}^2},\dots [b,T^{\seq{v}^k}]]]}{L^p(\mu) \rightarrow L^{p}(\lambda)}
\lesssim \normbmo{b}{\bI}{\nu},
$$
where $\seq{v}^i = (v^i_j)_{v^i_j\in \bI_i}$ and $T^{\seq{v}^i}$s are paraproduct free multi-parameter Calderón\hyp{}Zygmund operators.
\end{cor}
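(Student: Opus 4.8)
The plan is to deduce the corollary from Theorem \ref{thm:iteratedShift} by means of the multi-parameter dyadic representation theorem of Ou \cite{Ou2017}, exactly as one passes from dyadic model operators to general CZOs in the one- and bi-parameter theories. Since each $T^{\seq{v}^i}$ is a \emph{paraproduct free} $\#\bI_i$-parameter CZO, by definition it is represented using only dyadic shifts: there is a constant $C_{T_i} \lesssim \|T^{\seq{v}^i}\|_{\mathrm{CZO}}$ so that
$$
T^{\seq{v}^i} = C_{T_i}\, \E_{\bD^{d_{\seq{v}^i}}} \sum_{\seq{k}^i, \seq{l}^i \ge 0} 2^{-\alpha \max_{j}(k^i_j + l^i_j)}\, \bS^{\seq{v}^i, \seq{k}^i, \seq{l}^i}_{\bD^{d_{\seq{v}^i}}},
$$
where $\E_{\bD^{d_{\seq{v}^i}}}$ is the average over the (independent) random dyadic grids in the parameters of $\bI_i$ and each $\bS^{\seq{v}^i, \seq{k}^i, \seq{l}^i}_{\bD^{d_{\seq{v}^i}}}$ is a multi-parameter shift of complexity $(\seq{k}^i, \seq{l}^i)$ with coefficients normalised as in Section \ref{sec:shift}. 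No paraproduct model operators appear, which is precisely why the $\#\bI_i \le 2$ alternative of Theorem \ref{thm:main} is not needed here.

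First I would substitute these expansions into the iterated commutator. As $A \mapsto [A, \cdot]$ is linear and the grids attached to distinct blocks $\bI_i$ are independent, $[T^{\seq{v}^1},[T^{\seq{v}^2},\dots[b,T^{\seq{v}^k}]]]$ equals
$$
\Big(\prod_{i=1}^k C_{T_i}\Big) \E \sum_{\seq{k}^1,\seq{l}^1,\dots,\seq{k}^k,\seq{l}^k} \Big(\prod_{i=1}^k 2^{-\alpha \max_{j}(k^i_j+l^i_j)}\Big)\, \big[\bS^{\seq{v}^1,\seq{k}^1,\seq{l}^1},\big[\dots,[b,\bS^{\seq{v}^k,\seq{k}^k,\seq{l}^k}]\big]\big],
$$
an average of iterated commutators of multi-parameter shifts. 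By Theorem \ref{thm:iteratedShift} each summand is bounded $L^p(\mu) \to L^p(\lambda)$ with norm $\lesssim \normbmo{b}{\bI}{\nu}$, and the expectations over grids only cost a factor $1$. Hence the corollary follows once one knows that the constant furnished by Theorem \ref{thm:iteratedShift} grows at most \emph{polynomially} in the complexities $k^i_j,l^i_j$, since then the geometric factors $2^{-\alpha\max_j(k^i_j+l^i_j)}$ render the sum over all complexities convergent, with total constant $\lesssim \prod_i C_{T_i} \lesssim \prod_i \|T^{\seq{v}^i}\|_{\mathrm{CZO}}$.

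It therefore remains to trace the complexity dependence through the proof of Theorem \ref{thm:iteratedShift}, and this is the only real work. Inspecting that proof: the iterated shift commutator is expanded into sums of compositions of shifts with legal or illegal paraproducts; the terms carrying only legal paraproducts are controlled by Lemma \ref{lem:paraBound} together with the uniform $L^p(w)\to L^p(w)$ bounds for shifts, and these estimates are complexity-free once the shift coefficients are normalised. Each illegal term is treated by Lemma \ref{lem:diffofavgs}, which rewrites a difference of averages over $I_{\seq{v}}$ and $J_{\seq{v}}$ as a sum of at most $\sum_j(k^i_j+l^i_j)$ terms; iterating over the $k$ blocks produces polynomially many terms, \emph{each} of exactly the shape of the general term \eqref{eq:generalterm} — in particular the complexity-dependent scalars $\pair{h_{J_{\seq{v}}^{(t)}}}_{J_{\seq{v}}}$ have size $|J_{\seq{v}}^{(t)}|^{-1/2}$, which is precisely what the factor $\beta_{I_{\seq{v}^1},J_{\seq{v}^3}}$ in \eqref{eq:generalterm} is designed to absorb. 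Since Lemma \ref{lem:generalEstimation} bounds every such general term by $\normBMO{b}{\seq{u}}{\nu}\norm{f}{L^p(\mu)}$ with an implicit constant independent of all complexities, and $\normBMO{b}{\seq{u}}{\nu} \le \normbmo{b}{\bI}{\nu}$ for every admissible $\seq{u}$, summing over the polynomially many terms yields Theorem \ref{thm:iteratedShift} with a polynomial-in-complexity constant. This closes the argument; the main obstacle, as indicated, is the bookkeeping that confirms each term produced by Lemma \ref{lem:diffofavgs} genuinely matches \eqref{eq:generalterm}, so that Lemma \ref{lem:generalEstimation} applies with a complexity-uniform constant.
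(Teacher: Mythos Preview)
Your proposal is correct and follows exactly the route the paper takes: the paper simply states that the corollary follows from Theorem \ref{thm:iteratedShift} via the multi-parameter representation theorem \cite{Ou2017}, without spelling out the details. You supply precisely those details --- the substitution of the shift expansion into the iterated commutator, and the check that the implicit constant in Theorem \ref{thm:iteratedShift} is at worst polynomial in the complexities so that the geometric decay from the representation sums --- all of which is standard and correct.
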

\section{Commutators involving paraproducts}\label{sec:journe}

In this section, we consider the space $\R^d = \R^{d_1 + d_2 +d_3 + d_4}$ and  operators, which are defined in some  fixed grids $\bD^{d_i}.$

Next, we define the other two bi-parameter dyadic model operators: partial and full paraproducts.

\subsubsection*{Partial paraproduct} Let $k_1, l_1 \geq 0.$
We define
$$
 P^{1,2} g = P^{1,2, (k_1, l_1)}_{\bD^{d_1}, \bD^{d_2}} g =\sum_{\substack{K_1 \in \bD^{d_1} \\ K_2\in \bD^{d_2}}}\sum_{\substack{I_1,J_1 \in \bD^{d_1} \\
 I_1^{(k_1)} = K_1\\ J_1^{(l_1)}  = K_1}} \pair{a_{K_1, I_1,J_1}, h_{K_2}}   h_{J_1} \otimes \avgfu{K_2}\otimes\pair{ g, h_{I_1}\otimes h_{K_2}}_{1,2}.
$$
Here only finitely many of 
the coefficients $\pair{a_{K_1, I_1,J_1}, h_{K_2}}$ are non-zero, and 
$$
\norm{a_{K_1, I_1,J_1} }{\BMO(\bD^{d_2})} \leq \frac{|I_1|^{\pinv{2}}|J_1|^{\pinv{2}}}{|K_1|}.
$$
Also we have partial paraproduct of the form
$$
 P^{1,2} g = P^{1,2, (k_2,l_2)}_{\bD^{d_1}, \bD^{d_2}} g =
\sum_{\substack{K_1 \in \bD^{d_1} \\ K_2\in \bD^{d_2}}} \sum_{\substack{I_2, J_2 \in \bD^{d_2}\\I_2^{(k_2)} = K_2\\ J_2^{(l_2)}  = K_2}} \pair{a_{K_2, I_2,J_2}, h_{K_1}}  \avgfu{K_1} \otimes h_{J_2}\otimes \pair{ g, h_{K_1}\otimes h_{I_2}}_{1,2},
$$
where $k_2, l_2 \geq 0$ and 
$$
\norm{a_{K_2, I_2,J_2} }{\BMO(\bD^{d_1})} \leq \frac{|I_2|^{\pinv{2}}|J_2|^{\pinv{2}}}{|K_2|}.
$$

\subsubsection*{Full paraproduct}
We define
\begin{align*}
\Pi^{1,2} f = \Pi^{1,2}_{\bD^{d_1}, \bD^{d_2}} f = \sum_{\substack {K_1 \in \bD^{d_1} \\K_2 \in \bD^{d_2}}} \pair{a, h_{K_1 \times  K_2}}   \avgfu{K_1}\otimes \avgfu{K_2} \otimes\pair{ f, h_{K_1}\otimes h_{K_2}}_{1,2}.
\end{align*}
Here only finitely many of 
the coefficients $\pair{a, h_{K_1 \times  K_2}}$ are non-zero, and 
$$
\norm{a}{\BMO(\bD^{d_1} \times \bD^{d_2})} \leq 1.
$$
Also $\Pi^*, \Pi^{1*}, \Pi^{2*}$ are full paraproducts, where  e.g
 \begin{align*}
\Pi^{1*} f = \sum_{\substack {K_1 \in \bD^{d_1} \\K_2 \in \bD^{d_2}}} \pair{a, h_{K_1 \times  K_2}}   h_{K_1}\otimes \avgfu{K_2}\otimes \Bpair{ f,\avgfu{K_1} \otimes h_{K_2}}_{1,2}
\end{align*}
is the partial adjoint in the first parameter of above $\Pi^{1,2}$.
Later on, we abbreviate $ \pair{a, h_{K_1 \times {K_2}}}$ by $a_{K_1, K_2}.$

\begin{thm}
Let $\nu = \mu^\pinv{p} \lambda^{-\pinv{p}},$ where $\mu, \lambda \in A_p(\R^{d_1} \times \dots \times \R^{d_4})$ and  $1<p<\infty.$ There holds
$$
\norm{[T^{1,2}, [b,T^{3,4}]]}{L^p(\mu) \rightarrow L^{p}(\lambda)}
\lesssim \normbmo{b}{\ksulku{\ksulku{1,2} \ksulku{3,4}}}{\nu},
$$
where $T^{1,2}$ and $T^{3,4}$ are 
bi-parameter Calderón-Zygmund operators in $\R^{d_1 + d_2}$ and $\R^{d_3 + d_4},$ respectively.
\end{thm}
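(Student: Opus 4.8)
The plan is to apply the bi-parameter representation theorem to both $T^{1,2}$ and $T^{3,4}$, thereby reducing matters to the commutator of a single pair of dyadic model operators, and then to dispatch each such pair: the shift$\times$shift pair by Theorem~\ref{thm:iteratedShift}, pairs involving full paraproducts by the paraproduct machinery of Section~\ref{sec:expansion}, and --- the real point --- pairs involving partial paraproducts by adapting the one-parameter sparse-domination method of \cite{Li2018Bloom,Li2018prod}.

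\textbf{Reduction to model operators.} By the representation theorem of \cite{Martikainen2012} one may write, for nice $f,g$,
\begin{equation*}
\pair{T^{1,2}f,g} = C\,\E_{\bD^{d_1},\bD^{d_2}}\sum_{\seq{k}}\omega(\seq{k})\pair{U^{1,2}_{\seq{k}}f,g},
\end{equation*}
where $\omega(\seq{k})$ decays exponentially in the complexity $\seq{k}$ and each $U^{1,2}_{\seq{k}}$ is one of the model operators in $\R^{d_1+d_2}$ introduced above --- a cancellative bi-parameter shift $\bS^{1,2}$, a partial paraproduct $P^{1,2}$ (of either of the two types, or one of its partial or full adjoints), or a full paraproduct $\Pi^{1,2}$ (or $\Pi^*,\Pi^{1*},\Pi^{2*}$) --- each of which is bounded on $L^p(w)$ for $w\in A_p$ by known one-weight estimates. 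The analogous decomposition holds for $T^{3,4}$ in the parameters $3,4$ with complexity $\pseq{k}$. Since the target inequality is linear in the operator, the expectations can be pulled out by the triangle inequality, and the coefficients are summable, it suffices to prove
\begin{equation*}
\norm{[U^{1,2},[b,V^{3,4}]]}{L^p(\mu)\to L^p(\lambda)}\lesssim C(\seq{k},\pseq{k})\,\normbmo{b}{\ksulku{\ksulku{1,2},\ksulku{3,4}}}{\nu}
\end{equation*}
for each pair of model operators $(U^{1,2},V^{3,4})$, with $C(\seq{k},\pseq{k})$ of at most polynomial growth in the complexities, so as to be absorbed by $\omega$.

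\textbf{Shifts and full paraproducts.} When $U^{1,2}=\bS^{1,2}$ and $V^{3,4}=\bS^{3,4}$ this is precisely the two-iteration shift bound, i.e.\ Theorem~\ref{thm:iteratedShift} with $k=2$, $m=4$ and partition $\bI=\ksulku{\ksulku{1,2},\ksulku{3,4}}$; the polynomial dependence on the complexities is visible from that proof, where they enter only through the telescoping lengths of Lemma~\ref{lem:diffofavgs} and the number of resulting general terms. When at least one of $U^{1,2},V^{3,4}$ is a full paraproduct I would expand, in all four parameters, every pointwise product $b\cdot(\cdot)$ appearing in the four-term expansion of $[U^{1,2},[b,V^{3,4}]]$ using the multi-parameter paraproduct expansion of Section~\ref{sec:expansion}: the ``legal'' terms are bounded directly by Lemma~\ref{lem:paraBound} together with the $L^p(w)$-boundedness of the full paraproducts, while the ``illegal'' terms arising from different summands are paired off and telescoped by Lemma~\ref{lem:diffofavgs} exactly as in the proof of Theorem~\ref{thm:iteratedShift}, yielding expressions of the form \eqref{eq:generalterm} (with the extra averaging factors of the full paraproduct playing the role of some of the cancellative Haar functions there), to which Lemma~\ref{lem:generalEstimation} applies.

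\textbf{Partial paraproducts (the main obstacle).} The hard case is when $U^{1,2}$ or $V^{3,4}$ is a partial paraproduct, say $U^{1,2}=P^{1,2}$ of the first type: it is a cancellative shift in the parameter $1$ whose coefficients $a_{K_1,I_1,J_1}$ are $\BMO(\bD^{d_2})$-valued in the parameter $2$, so no telescoping is available in parameter $2$ and Lemma~\ref{lem:diffofavgs} cannot be used there. Following \cite{Li2018Bloom,Li2018prod}, I would expand the commutator only in the parameters where $U^{1,2}$ and $V^{3,4}$ act cancellatively --- this reduces the number of shifts and produces general terms \eqref{eq:generalterm} bounded by Lemma~\ref{lem:generalEstimation} as before --- and handle the remaining ``paraproduct direction'' by freezing the other three parameters and invoking the one-parameter Bloom sparse-domination bound for commutators with (the symbols of) paraproducts, then re-integrating the frozen variables through Proposition~\ref{prop:uniformBMO} to recover the correct weighted $\BMO^{\seq{v}}$ component of $\normbmo{b}{\bI}{\nu}$. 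This is exactly the step that needs the hypothesis ``$\#\bI_i\le 2$ unless $T_i$ is paraproduct free'', since a partial paraproduct must sit inside a genuinely bi-parameter operator for the one-parameter sparse-domination machinery to apply. I expect the bookkeeping here --- matching the frozen variables and the martingale blocks of $f$ and $g$ so that the precise weighted little-BMO norm of $b$ survives after integration --- to be the most delicate part, and I would present it through a carefully chosen representative special case, after which the remaining terms of this type follow by the same scheme.
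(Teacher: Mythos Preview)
Your reduction to model operators and the shift$\times$shift case are fine and match the paper. The gap is in how you handle paraproducts.

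You propose to ``expand, in all four parameters, every pointwise product $b\cdot(\cdot)$'' and then telescope the illegal terms via Lemma~\ref{lem:diffofavgs} as in the shift proof. This does not work. Lemma~\ref{lem:diffofavgs} telescopes $\pair{b}_{J_{\seq v}}-\pair{b}_{I_{\seq v}}$ where $I,J$ share a dyadic ancestor $K$; that structure exists only in the shift directions. A full paraproduct $\Pi^{1,2}$ has input Haar $h_{K_1}\otimes 1_{K_2}/|K_2|$ and output $1_{K_1}/|K_1|\otimes h_{K_2}$, so there is no $(I,J,K)$ triple in either parameter to telescope, and the four illegal pieces you would produce by expanding uniformly in all four parameters do not cancel against each other.

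What the paper actually does is expand each of the four commutator terms $I,II,III,IV$ in \emph{different} sets of parameters, chosen so that in every parameter where you expand, a \emph{cancellative} Haar function is the one paired with $b$ at that stage. Concretely, for $[\Pi^{1,2},[b,P^{3,4}]]$ with the forms of $\Pi,P$ chosen in the paper, the expansions are in parameters $(1,3)$ for $I$, $(1,3,4)$ for $II$, $(2,3)$ for $III$, and $(2,3,4)$ for $IV$. After this asymmetric expansion the terms are paired as in \eqref{eq:Parapairing}; the resulting differences of averages in the $\Pi$-directions are handled not by Lemma~\ref{lem:diffofavgs} but by the identity
\[
(\pair{b}_{K_1,1}-\pair{b}_{K_2,2})1_{K_1\times K_2}=\sum_{I_2\subset K_2}\Bpair{b,\tfrac{1_{K_1}}{|K_1|}\otimes h_{I_2}}_{1,2}\otimes h_{I_2}-\sum_{I_1\subset K_1}\Bpair{b,h_{I_1}\otimes\tfrac{1_{K_2}}{|K_2|}}_{1,2}\otimes h_{I_1},
\]
which is specific to the full-paraproduct structure. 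Your sketch for partial paraproducts is closer in spirit (expand only in the shift direction, use one-parameter sparse domination in the $\BMO$-coefficient direction), but the same asymmetric expansion rule --- parameters chosen per term $I,II,III,IV$ according to which Haar function meets $b$ --- is needed there too, and the pairing \eqref{eq:Parapairing} is what makes the bookkeeping close. Without this, the ``legal vs.\ illegal'' dichotomy you invoke does not line up and the residual terms cannot be put into the general form~\eqref{eq:generalterm}.
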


\begin{proof}

By the representation theorem \cite{Martikainen2012},  we are considering the following collection of commutators:
\begingroup
\setlength{\tabcolsep}{10pt} % Default value: 6pt
\renewcommand{\arraystretch}{1.5} % Default value: 1
\begin{center}
    \begin{tabular}{| l | l | l |}
    \hline
    $[\bS^{1,2}, [b, \bS^{3,4}]]$ & $[\bS^{1,2}, [b, P^{3,4}]]$ & $[\bS^{1,2}, [b, \Pi^{3,4}]]$  \\ \hline
    $[P^{1,2}, [b, \bS^{3,4}]]$ & $[P^{1,2}, [b, P^{3,4}]]$ & $[P^{1,2}, [b, \Pi^{3,4}]]$ \\ \hline
    $[\Pi^{1,2}, [b, \bS^{3,4}]]$ & $[\Pi^{1,2}, [b, P^{3,4}]]$ & $[\Pi^{1,2}, [b, \Pi^{3,4}]]$\\ \hline
    \end{tabular}
\end{center}
\endgroup 

By definition, for all model operators we have
\begin{align*}
[U^{1,2}, [b, V^{3,4}]]f
&= U^{1,2}bV^{3,4}f-U^{1,2}V^{3,4}bf-bV^{3,4}U^{1,2} f+ V^{3,4}bU^{1,2}f\\
&=:I-II-III+IV.
\end{align*}
Now, the forms of $U$ and $V$ determines how we expand the terms. We expand the products in the parameters, where a cancellative Haar function is paired with  $b.$ In the parameters where $b$ is paired with a non-cancellative Haar function we do not expand at all.

As explained earlier, by Lemma \ref{lem:paraBound} the terms, where $b$ is paired with the cancellative Haar functions on parameters 1 or 2, and 3 or 4, are directly bounded with the correct $\BMO$ condition. For the other terms, we need to pair terms depending on the expansion.

We only demonstrate the general strategy with a case involving both full and partial paraproducts.

We are considering model operators of the following form
\begin{align*}
\Pi^{1,2} f = \sum_{K_1,K_2} \pair{a ,h_{K_1 \times {K_2}}}   \frac{1_{K_1}}{|K_1|}\otimes h_{K_2} \otimes\Bpair{ f, h_{K_1}\otimes \avgfu{K_2}}_{1,2}, \\
 P^{3,4} g =\sum_{K_3,K_4}\sum_{\substack{I_3^{(k_3)} = K_3\\ J_3^{(l_3)}  = K_3}} \pair{a_{K_3, I_3,J_3}, h_{K_4}}  \pair{ g, h_{I_3}\otimes h_{K_4}}_{3,4}\otimes h_{J_3} \otimes \frac{1_{K_4}}{|K_4|}.
\end{align*}
Here only finitely many of 
the coefficients $\pair{a,h_{K_1 \times {K_2}}}$ and $\pair{a_{K_3, I_3,J_3}, h_{K_4}}$ are non-zero, and these coefficients have the following bounds
$$
\norm{a}{\BMO(\bD^{d_1} \times \bD^{d_2})} \leq 1,
$$
$$
\norm{a_{K_3, I_3,J_3} }{\BMO(\bD^{d_4})} \leq \frac{|I_3|^{\pinv{2}}|J_3|^{\pinv{2}}}{|K_3|}.
$$

As explained earlier, we expand the appearing terms in the following way:
\begin{align*}
I &= \sum_{i_1, i_3=1}^3 \Pi^{1,2} A_{i_1}^1 A_{i_3}^3 (b, P^{3,4}f),  &II &= \sum_{i_1,i_3,i_4 = 1}^3 \Pi^{1,2} P^{3,4} A_{i_1}^1 A_{i_3,i_4}^{3,4} (b, f), \\
III &= \sum_{i_2, i_3 = 1}^3 A_{i_2}^{2} A_{i_3}^3 (b, \Pi^{1,2} P^{3,4} f), &IV &=  \sum_{i_2, i_3, i_4 = 1}^3 P^{3,4} A_{i_2}^{2} A_{i_3,i_4}^{3,4}(b, \Pi^{1,2} f).
\end{align*} 

Boundedness of the model operators combined with Lemma \ref{lem:paraBound} implies that each term is directly bounded whenever we do not have the ``illegal'' paraproducts in the parameters 1 or 2 and 3 or 4.
 For the rest of the terms, we group as follows 
\begin{align}\label{eq:Parapairing}
&\sum_{i_3=1}^2 \bbrac{\Pi^{1,2} A_{3}^1 A_{i_3}^3 (b, P^{3,4}f) - A_{3}^{2} A_{i_3}^3 (b, \Pi^{1,2} P^{3,4} f)}\nonumber  \\
&+ \sum_{i_1=1}^2 \bbrac{ \Pi^{1,2} A_{i_1}^1 A_{3}^3 (b, P^{3,4}f) - \Pi^{1,2} P^{3,4} A_{i_1}^1 A_{3,3}^{3,4} (b, f)} \nonumber \\
&+ \sum_{(i_3,i_4) \in \ksulku{1,2,3}^2 \setminus \ksulku{3}^2}  \bbrac{P^{3,4} A_{3}^{2} A_{i_3,i_4}^{3,4}(b, \Pi^{1,2} f) -  \Pi^{1,2} P^{3,4} A_{3}^1 A_{i_3,i_4}^{3,4} (b, f)} \\
&+ \sum_{i_2=1}^2 \bbrac{ P^{3,4} A_{i_2}^{2} A_{3,3}^{3,4}(b, \Pi^{1,2} f) - A_{i_2}^{2} A_{3}^3 (b, \Pi^{1,2} P^{3,4} f)}\nonumber \\
&+\bbrac{ \Pi^{1,2} A_{3}^1 A_{3}^3 (b, P^{3,4}f) -  \Pi^{1,2} P^{3,4} A_{3}^1 A_{3,3}^{3,4} (b, f) \nonumber \\
&\qquad- A_{3}^{2} A_{3}^3 (b, \Pi^{1,2} P^{3,4} f) + P^{3,4} A_{3}^{2} A_{3,3}^{3,4}(b, \Pi^{1,2} f)}. \nonumber 
\end{align}

We begin with the first pair. Since $P^{3,4}$ is bounded, it is enough to consider the boundedness of $ \Pi^{1,2} A_{3}^1 A_{i_3}^3 (b, f) - A_{3}^{2} A_{i_3}^3 (b, \Pi^{1,2} f).
$
We show the case $i_3 = 1,$ the other case can be handled similarly. 

First, notice that by the one-parameter expansion $\sum_{I_1 \subset K_1} \Delta_{I_1}^1 \varphi = (\varphi  - \pair{\varphi}_{K_1,1})1_{K_1},$  we have
\begin{align*}
(\pair{\varphi}_{K_1,1} &- \pair{\varphi}_{K_2,2})1_{K_1 \times K_2} \\
&= (\pair{\varphi}_{K_1,1} - \pair{\varphi}_{K_1 \times K_2,1,2} + \pair{\varphi}_{K_1 \times K_2,1,2} - \pair{\varphi}_{K_2,2})1_{K_1 \times K_2}  \\
&=\sum_{I_2 \subset K_2} \Bpair{\varphi, \avgfu{K_1}\otimes h_{I_2} }_{1,2} \otimes h_{I_2} - \sum_{I_1 \subset K_1} \Bpair{\varphi, h_{I_1} \otimes \avgfu{K_2}}_{1,2} \otimes h_{I_1}.
\end{align*}
Using the previous observation, we have
\begin{align*}
 &\Pi^{1,2} A_{3}^1 A_{1}^3 (b, f) - A_{3}^{2} A_{1}^3 (b, \Pi^{1,2} f)\\
&=\sum_{K_1, K_2, K_3} a_{K_1, K_2} \Bbrac{\pair{\pair{b_{K_3}}_{K_1, 1} \pair{f, h_{K_1}\otimes h_{K_3}}_{1,3}}_{K_{2},2} \\
&\hspace{4em}- \pair{b_{K_3}}_{K_2, 2} \Bpair{f, h_{K_1}\otimes \frac{1_{K_2}}{|K_2|} \otimes h_{K_3}}_{1,2,3}}\frac{1_{K_1}}{|K_1|} \otimes h_{K_2} \otimes h_{K_3}h_{K_3} \\
&=\sum_{K_1, K_2, K_3} a_{K_1, K_2} \Bpair{\sulku{\pair{b_{K_3}}_{K_1, 1}- \pair{b_{K_3}}_{K_2,2}}1_{K_1 \times K_2} \pair{f, h_{K_1}\otimes h_{K_3}}_{1,3}, \avgfu{K_2}}_{2} \\
&\hspace{4em}\times  \frac{1_{K_1}}{|K_1|} \otimes h_{K_2} \otimes h_{K_3}h_{K_3} \\
&= \sum_{K_1, K_2, K_3} a_{K_1, K_2} \bigbrac{\sum_{Q_2 \subset K_2} |K_2|^{-1}\Bpair{b, \frac{1_{K_1}}{|K_1|} \otimes h_{Q_2} \otimes h_{K_3}}_{1,2,3}\pair{f, h_{K_1} \otimes h_{Q_2} \otimes h_{K_3}}_{1,2,3} \\
&\qquad- \sum_{Q_1 \subset K_1} \Bpair{b, h_{Q_1} \otimes \frac{1_{K_2}}{|K_2|} \otimes h_{K_3}}_{1,2,3}\pair{f, h_{K_1} \otimes \frac{1_{K_2}}{|K_2|} \otimes h_{K_3}}_{1,2,3} \otimes h_{Q_1}}\\
&\hspace{4em}\times \frac{1_{K_1}}{|K_1|} \otimes h_{K_2} \otimes h_{K_3}h_{K_3},
\end{align*}
where $b_{K_3} = \pair{b, 1_{K_3}/|K_3|}_3.$
These terms are similar to handle and we deal with the first one. By the $L^p$ duality,  we have
\begin{align*}
\Babs{\int_{\R^{d_4}} \sum_{K_1, K_2, K_3} a_{K_1, K_2}|K_2|^{-1}   &\sum_{Q_2 \subset K_2} \Bpair{b_{x_4}, \frac{1_{K_1}}{|K_1|} \otimes h_{Q_2} \otimes h_{K_3}}_{1,2,3} \\
&\times \pair{f_{x_4}, h_{K_1} \otimes h_{Q_2} \otimes h_{K_3}}_{1,2,3}  \\
&\times\Bpair{g_{x_4},\frac{1_{K_1}}{|K_1|} \otimes h_{K_2} \otimes h_{K_3}h_{K_3}}_{1,2,3} \dd x_4}.
\end{align*}
Fix $x_4 \in \R^{d_4}.$  By Proposition \ref{prop:uniformBMO} we need to show the boundedness of 
\begin{align*}
&\int_{\R^{d_1+d_2+d_3}} \sum_{K_1,K_2} |a_{K_1, K_2}| \avgfu{K_1} \\
&\qquad\times\Bsulk{\sum_{Q_2, K_3}\pair{f_{x_4}, h_{K_1} \otimes h_{Q_2} \otimes h_{K_3}}^2 \pair{\abs{\pair{g_{x_4}, h_{K_2}}_2}}_{K_1 \times K_3}^2 \frac{1_{Q_2}}{|Q_2|} \otimes \frac{1_{K_3}}{|K_3|}}^\pinv{2} \frac{1_{K_2}}{|K_2|} \nu_{x_4} \\
&\leq \int_{\R^{d_1+d_2+d_3}} \sum_{K_1} S^{2,3} \pair{f_{x_4}, h_{K_1}}_1 \sum_{ K_2} |a_{K_1, K_2}|  \frac{1_{K_2}}{|K_2|} \otimes \pair{M^3 \pair{g_{x_4}, h_{K_2}}_2}_{K_1,1} \nu_{x_4}.
\end{align*}
We begin by writing
$$
G = \sum_{I_1, K_2} |a_{I_1, K_2}| h_{I_1} \otimes \frac{1_{K_2}}{|K_2|} \otimes \pair{M^3 \pair{g_{x_4}, h_{K_2}}_2}_{I_1,1}.
$$ 

Hence, by standard estimates we get 
\begin{align*}
&\int_{\R^{d_1+d_2+d_3}} \sum_{K_1} S^{2,3} \pair{f_{x_4}, h_{K_1}}_1 \pair{G, h_{K_1}}_1 \frac{1_{K_1}}{|K_1|} \nu_{x_4} \\
&\leq \int_{\R^{d_1+d_2+d_3}} \sqsumm{S^{2,3}\pair{f_{x_4},h_{K_1}}_1}{K_1} \sqsumm{\pair{G, h_{K_1}}_1}{K_1} \nu_{x_4} \\
&\leq \norm{S^{1,2,3} f_{x_4}}{L^{p}(\mu_{x_4})} \norm{S^{1} G}{L^{p'}(\lambda_{x_4}^{1 - p'})} \\
&\lesssim_{[\mu]_{A_p}, [\lambda]_{A_p}} \norm{f}{L^{p}(\mu_{x_4})}\norm{g}{L^{p'}(\lambda_{x_4}^{1 - p'})},
\end{align*}
and applying Hölder's inequality once more to the integral on $\R^{d_4}$ we have the desired bound.

Next, we deal with the second term in \eqref{eq:Parapairing} with $i_1 = 2.$ More precisely, it is enough to consider the term 
\begin{align*}
&\sum_{K_1, K_3, K_4} \sum_{\substack{I_3^{(k_3)} = K_3 \\ J_3^{(l_3)} = K_3}} \pair{a_{K_3, I_3, J_3}, h_{K_4}} h_{K_1} \otimes \Bbrac{\pair{b_{K_1}}_{J_3 ,3} -\pair{ b_{K_1}}_{I_3 \times K_4,3,4}} \\
&\hspace{5em}\times \Bpair{f, \frac{1_{K_1}}{|K_1|} \otimes h_{I_3} \otimes h_{K_4}}_{1,3,4}  \otimes h_{J_3} \otimes \frac{1_{K_4}}{|K_4|} \\
&= \sum_{K_1, K_3, K_4} \sum_{\substack{I_3^{(k_3)} = K_3 \\ J_3^{(l_3)} = K_3}} \pair{a_{K_3, I_3, J_3}, h_{K_4}} h_{K_1} \otimes \\
&\qquad\bigbrac{ \sum_{Q_4 \subset K_4} \Delta_{Q_4}^4\pair{ b_{K_1}}_{J_3,3} + \sum_{t_3 = 1}^{l_3 } \bpair{\Delta_{J_3^{(t_3)}}^3 b_{K_1} }_{J_3 \times K_4, 3,4} - \sum_{s_3 = 1}^{k_3 } \bpair{\Delta_{I_3^{(s_3)}}^3 b_{K_1}}_{I_3 \times K_4, 3,4}} \\
&\hspace{3em}\times\Bpair{f, \frac{1_{K_1}}{|K_1|} \otimes h_{I_3} \otimes h_{K_4}}_{1,3,4}   \otimes h_{J_3} \otimes \frac{1_{K_4}}{|K_4|}
=: \sigma_1 + \sigma_2 - \sigma_3,
\end{align*}
where $b_{K_1} := \pair{b, h_{K_1}}_{1}.$
Terms $\sigma_2$ and $\sigma_3$ are handled similarly.  Therefore, we show the boundedness of $\sigma_1$ and $\sigma_2.$ We begin with the dual form of the first one
\begin{align}
\Babs{\int_{\R^{d_2}}\sum_{K_3, K_4} \sum_{\substack{I_3^{(k_3)} = K_3 \\ J_3^{(l_3)} = K_3}} \pair{a_{K_3, I_3, J_3}, h_{K_4}}|K_4|^{-1} \sum_{\substack{K_1\\Q_4 \subset K_4}}\pair{\pair{ b, h_{K_1} \otimes h_{Q_4} }_{1,4}}_{J_3,3} \nonumber \\
\Bpair{f, \frac{1_{K_1}}{|K_1|} \otimes h_{I_3} \otimes h_{K_4}}_{1,3,4} \pair{g, h_{K_1}  \otimes h_{J_3} \otimes h_{Q_4}}_{1,3,4}}.
\end{align}
Fix $x_2\in \R^{d_2}.$ By Proposition \ref{prop:uniformBMO} it is enough to estimate
\begin{align*}
&\int_{\R^{d_1 + d_3 + d_4}}\sum_{ K_3, K_4} \sum_{\substack{I_3^{(k_3)} = K_3 \\ J_3^{(l_3)} = K_3}} \abs{\pair{a_{K_3, I_3, J_3}, h_{K_4}}} \biggsulk{\sum_{\substack{K_1 ,Q_4 }}\Bpair{f_{x_2}, \frac{1_{K_1}}{|K_1|} \otimes h_{I_3} \otimes h_{K_4}}^2  \\
&\hspace{10em}\times\pair{g_{x_2}, h_{K_1}  \otimes h_{J_3} \otimes h_{Q_4}}^2 \avgfu{K_1} \otimes \avgfu{Q_4}}^\pinv{2} \avgfu{J_3} \otimes \avgfu{K_4} \nu_{x_2}.
\end{align*}
First, we estimate and write
\begin{align*}
\Babs{\Bpair{f_{x_2}, \frac{1_{K_1}}{|K_1|} \otimes h_{I_3} \otimes h_{K_4}}} &\leq  \pair{ M^1 \Delta_{K_3 \times K_4, (k_3,0)}^{3,4} f_{x_2}}_{I_3 \times K_4,3,4} |I_3|^\pinv{2} |K_4|^\pinv{2}\\
&=: \pair{F_{x_2, K_3, K_4}}_{I_3 \times K_4,3,4}|I_3|^\pinv{2} |K_4|^\pinv{2}.
\end{align*}
Hence, we can estimate
\begin{align*}
&\biggsulk{\sum_{\substack{K_1 ,Q_4 }} \pair{ g_{x_2}, h_{K_1} \otimes h_{J_3} \otimes h_{Q_4}}^2 \avgfu{K_1} \otimes \avgfu{Q_4}}^\pinv{2}\\
&\qquad\leq \biggsulk{\sum_{\substack{K_1 ,Q_4 }} \pair{M^{3} \Delta_{K_1 \times K_3 \times Q_4, (0,l_3,0)}g_{x_2}}_{K_1 \times Q_4}^2 1_{K_1} \otimes 1_{Q_4}}^\pinv{2}|J_3|^\pinv{2} =: G_{x_2, K_3} |J_3|^\pinv{2}.
\end{align*}
Using these estimates we get
\begin{align*}
&\int_{\R^{d_1 + d_3 + d_4}}\sum_{ K_3, K_4} \sum_{\substack{I_3^{(k_3)} = K_3 \\ J_3^{(l_3)} = K_3}} \abs{\pair{a_{K_3, I_3, J_3}, h_{K_4}}} \biggsulk{\sum_{\substack{K_1 ,Q_4 }}\Bpair{f_{x_2}, \frac{1_{K_1}}{|K_1|} \otimes h_{I_3} \otimes h_{K_4}}^2  \\
&\hspace{10em}\times\pair{g_{x_2}, h_{K_1}  \otimes h_{J_3} \otimes h_{Q_4}}^2 \avgfu{K_1} \otimes \avgfu{Q_4}}^\pinv{2} \avgfu{J_3} \otimes \avgfu{K_4} \nu_{x_2} \\
&\leq \int_{\R^{d_1 + d_3 }}\sum_{ K_3} \sum_{\substack{I_3^{(k_3)} = K_3 \\ J_3^{(l_3)} = K_3}}|I_3|^\pinv{2} \avgfr{J_3} \\
&\hspace{6em}\times\sum_{K_4} \abs{\pair{a_{K_3, I_3, J_3}, h_{K_4}}}  \pair{F_{x_2, K_3, K_4}}_{I_3 \times K_4,3,4} \pair{G_{x_2, K_3}\nu_{x_2}}_{K_4} |K_4|^{\pinv{2}} \\ 
&\leq   \int_{\R^{d_1 + d_3+d_4 }}\sum_{ K_3 } |K_3|^{-1}  M^{4}(G_{x_2, K_3}\nu_{x_2})
\\
&\hspace{6em}\times\sum_{\substack{I_3^{(k_3)} = K_3 \\ J_3^{(l_3)} = K_3}}  \Bsulk{\sum_{K_4} |I_3|^{2} \pair{F_{x_2, K_3, K_4}}_{I_3 \times K_4,3,4}^2 \otimes 1_{K_4}}^\pinv{2}\otimes 1_{J_3}  \\
&\stackrel{(*)}{\lesssim} \Bnorm{\Bsulk{\sum_{K_3} \Bpair{\Bsulk{\sum_{K_4}\pair{ F_{x_2, K_3, K_4}}_{K_4,4}^2 \otimes 1_{K_4}}^\pinv{2}}_{K_3}^2 \otimes 1_{K_3}}^\pinv{2}}{L^{p}(\lambda_{x_2})} \\
&\qquad \times\Bnorm{\Bsulk{\sum_{K_3}  M^{4}(G_{x_2, K_3}\nu_{x_2})^2 }^\pinv{2}}{L^{p'}({\lambda_{x_2}^{1-p'}})} \\
&\lesssim_{[\mu]_{A_p}[\lambda]_{A_p}}\norm{f_{x_2}}{L^p(\mu_{x_2})}\norm{g_{x_2}}{L^{p'}(\lambda_{x_2}^{1-p'})},
\end{align*}
where in the step $(*)$ along with obvious Hölder's inequalities we used the following application of Kahane-Khintchine's inequality
\begin{align*}
|K|^{-1} \sum_{I^{(k)} = K} \Bsulk{\sum_{i} |I|^2 \pair{\varphi_i}_{I}^2}^\pinv{2} &\sim |K|^{-1}\sum_{I^{(k)} = K} \E\Babs{\sum_{i} \epsilon_i |I| \pair{\varphi_{i}}_{I}} \\
&= |K|^{-1}\sum_{I^{(k)} = K} \E\Babs{\sum_{i} \epsilon_i \int_{I} \varphi_{i}(x) \dd x} \\
&\leq |K|^{-1} \sum_{I^{(k)} = K} \E \int_{I}\Babs{\sum_{i} \epsilon_i  \varphi_{i}(x) }\dd x \\
&\sim \Bpair{\Bsulk{\sum_{i} \abs{\varphi_i}^2 }^\pinv{2} }_{K}.
\end{align*}
After applying Hölder's inequality to the integral on $\R^{d_2},$ we get the desired bound for $\sigma_1.$

 Then take $\sigma_2$  with fixed $t_3 \in [1,l_3].$
By duality and Proposition \ref{prop:uniformBMO}, the term that we are estimating equals to
\begin{align*}
&\int_{\R^{d}} \sum_{K_3} \biggsulk{\sum_{\substack{K_1\\ P_3^{(l_3 - t_3)} = K_3}} \bigbrac{\sum_{\substack{I_3^{(k_3)} = K_3 \\ J_3^{(t_3)} = P_3}} \sum_{K_4} |\pair{a_{K_3, I_3, J_3}, h_{K_4}}| |P_{3}|^{-\pinv{2}}\Babs{\Bpair{f,\frac{1_{K_1}}{|K_1|} \otimes h_{I_3} \otimes h_{K_4}}_{1,3,4}} \\
&\hspace{10em}\times \Babs{\Bpair{g, h_{K_1} \otimes h_{J_3} \otimes \frac{1_{K_4}}{|K_4|}}_{1,3,4}} \otimes\avgfu{K_4}}^2  \otimes \avgfu{K_1} \otimes \avgfu{P_3} }^\pinv{2} \nu. 
\end{align*}

Begin by fixing $x_2 \in \R^{d_2}.$   Then by sparse domination of bilinear paraproducts (see e.g.  Lemma 6.7 in \cite{Li2018toAp}) we can deduce
\begin{align*}
\int_{\R^{d_4}}& \sum_{K_4} |\pair{a_{K_3, I_3, J_3}, h_{K_4}}|\Babs{\Bpair{f_{x_2},\frac{1_{K_1}}{|K_1|} \otimes h_{I_3} \otimes h_{K_4}}}  \Babs{\Bpair{g_{x_2}, h_{K_1} \otimes h_{J_3} \otimes \frac{1_{K_4}}{|K_4|}}} \avgfu{K_4} \nu_{x_2} \\
&\lesssim_{[\nu]_{A_\infty}} \frac{|I_3|^\pinv{2}|J_3|^\pinv{2}}{|K_3|}\int_{\R^{d_4}} M^{4}\Bpair{f_{x_2},\frac{1_{K_1}}{|K_1|} \otimes h_{I_3}}_{1,3} M^{4} \pair{g_{x_2}, h_{K_1} \otimes h_{J_3}}_{1,3}  \nu_{x_2} \\
&\leq \frac{|I_3||J_3||K_1|^\pinv{2}}{|K_3|} \int_{\R^{d_4}} \pair{M^4 \Delta_{K_3, k_3}^3 f_{x_2}}_{K_1 \times I_3,1,3}   \pair{M^{4} \Delta_{K_1, K_3, (0,l_3)}^{1,3}g_{x_2} }_{K_1\times J_3, 1,3} \nu_{x_2}.
\end{align*}
Using the previous estimates and $A_\infty$-extrapolation we get that our term is bounded by 
\begin{align*}
&\sum_{K_3} \int_{\R^{d_1+  d_3 +d_4}}\Bsulk{\sum_{\substack{K_1\\ P_3^{(l_3 - t_3)} = K_3}} \pair{M^4 \Delta_{K_3, k_3}^3 f_{x_2}}_{K_1 \times K_3,1,3}^2 \\
&\hspace{10em}\times \pair{M^{4} \Delta_{K_1, K_3, (0,l_3)}^{1,3}g_{x_2} }_{K_1\times P_3, 1,3}^2\otimes 1_{K_1} \otimes 1_{P_3}}^\pinv{2} \nu_{x_2} \\
&\leq \int_{\R^{d_1+  d_3 +d_4}} \Bsulk{\sum_{K_3} \pair{M^{1,4} \Delta_{K_3, k_3}^3 f_{x_2}}_{ K_3,3}^2 \otimes 1_{K_3}}^\pinv{2} \\
&\hspace{6em} \Bsulk{\sum_{K_1, K_3} \pair{M^{3,4} \Delta_{K_1, K_3, (0,l_3)}^{1,3}g_{x_2} }_{K_1 , 1,3}^2 \otimes 1_{K_1} }^\pinv{2} \nu_{x_2} \\
&\lesssim_{[\mu]_{A_p}, [\lambda]_{A_p}} \norm{f}{L^{p}(\mu_{x_2})}\norm{g}{L^{p'}(\lambda_{x_2}^{1 - p'})}.
\end{align*}
Apply Hölder's inequality to the integral on $\R^{d_2}$ to get the desired boundedness.

The third and fourth terms in \eqref{eq:Parapairing} are similar. Thus, we only need to take care of the last term 
\begin{align*}
&\sum_{\substack{K_1,K_2\\K_3,K_4}}\sum_{\substack{I_3^{(k_3)} = K_3\\ J_3^{(l_3)}  = K_3}} a_{K_1, K_2}\pair{a_{K_3, I_3,J_3}, h_{K_4}} 
A^{f,b}_{K_1, K_2, I_3, J_3, K_4} \avgfu{K_1} \otimes h_{K_2} \otimes h_{J_3} \otimes \frac{1_{K_4}}{|K_4|},
\end{align*}
where
$$
A^{f,b}_{K_1, K_2, I_3, J_3, K_4}  = \pair{B \pair{f, h_{K_1} \otimes h_{I_3} \otimes h_{K_4}}_{1,3,4}}_{K_2, 2}
$$
and 
\begin{align*}
B = \pair{b}_{K_1 \times J_3,1,3} - \pair{b}_{K_1 \times I_3 \times K_4,1,3,4} - \pair{b}_{K_2 \times J_3,2,3} + \pair{b}_{K_2 \times I_3 \times K_4,2,3,4}.
\end{align*}

We write
\begin{align*}
B &=\sum_{\substack{Q_2\subset K_2 \\ Q_4 \subset K_4}}\Delta_{Q_2 \times Q_4} \pair{b}_{K_1 \times J_3,1,3} + \sum_{t_3 = 1}^{l_3} \sum_{Q_2\subset K_2} \pair{\Delta_{J_3^{(t_3)}\times Q_2}\pair{b}_{K_1 \times  K_4,1,4}}_{J_3,3} \\
&\qquad- \sum_{s_3 = 1}^{k_3} \sum_{Q_2\subset K_2} \pair{\Delta_{I_3^{(s_3)} \times Q_2}\pair{b}_{K_1 \times  K_4,1,4}}_{I_3,3}  - \sum_{\substack{Q_1\subset K_1 \\ Q_4 \subset K_4}}\Delta_{Q_1 \times Q_4} \pair{b}_{K_2 \times J_3,2,3} \\
&\qquad - \sum_{t_3 = 1}^{l_3} \sum_{Q_1\subset K_1} \pair{\Delta_{J_3^{(t_3)}\times Q_1}\pair{b}_{K_2 \times  K_4,2,4}}_{J_3,3}  + \sum_{s_3 = 1}^{k_3} \sum_{Q_1\subset K_1} \pair{\Delta_{I_3^{(s_3)} \times Q_1}\pair{b}_{K_2 \times  K_4,2,4}}_{I_3,3} \\
&=: \sum_{i = 1}^6 B_{i}.
\end{align*}
We consider the terms associated to $B_1$ and $B_2$ since the rest can be estimated similarly.

The dual form of the term associated to $B_1$ equals to
\begin{align*}
&\Big|\sum_{\substack{K_1,K_2\\K_3,K_4}}\sum_{\substack{I_3^{(k_3)} = K_3\\ J_3^{(l_3)}  = K_3}} a_{K_1, K_2}\pair{a_{K_3, I_3,J_3}, h_{K_4}}|K_2|^{-1} |K_4|^{-1}  \\
&\qquad\times \sum_{\substack{Q_2\subset K_2 \\ Q_4 \subset K_4}}  \Bpair{b, \avgfu{K_1} \otimes h_{Q_2} \otimes \avgfu{J_3} \otimes h_{Q_4} }\pair{f, h_{K_1} \otimes h_{Q_2} \otimes h_{I_3} \otimes h_{K_4}}\\
&\qquad\hspace{4em}\times\Bpair{g, \avgfu{K_1} \otimes h_{K_2} \otimes h_{J_3} \otimes h_{Q_4}}\Big|.
\end{align*}
By Proposition \ref{prop:uniformBMO} it is enough to estimate the following term
\begin{align*}
&\int_{\R^{d}} \Bsulk{\sum_{\substack{K_1,K_2\\K_3,K_4}}\sum_{\substack{I_3^{(k_3)} = K_3\\ J_3^{(l_3)}  = K_3}} |a_{K_1, K_2}||\pair{a_{K_3, I_3,J_3}, h_{K_4}}| \avgfu{K_1} \otimes  \avgfu{K_2}S^{2} \pair{f, h_{K_1} \otimes h_{I_3} \otimes h_{K_4}}_{1,3,4}\\
&\hspace{4em} \otimes \avgfu{J_3} \otimes \avgfu{K_4}S^{4}\Bpair{g, \avgfu{K_1} \otimes h_{K_2} \otimes h_{J_3} }_{1,2,3} } \nu.
\end{align*}
First, we write
$$
G := \sum_{Q_1, K_2, Q_3} |a_{Q_1, K_2}| h_{Q_1}\otimes \avgfu{K_2} \otimes h_{Q_3} \otimes  S^4 \Bpair{g, \avgfu{Q_1} \otimes h_{K_2} \otimes h_{Q_3} }_{1,2,3}.
$$
Thus  we get
\begin{align*}
&\int_{\R^{d_1 +d_2 +d_3}}\sum_{K_1,K_3}\sum_{\substack{I_3^{(k_3)} = K_3\\ J_3^{(l_3)}  = K_3}} \avgfu{K_1} \otimes \avgfu{J_3} \\
&\hspace{4em}\times\sum_{K_4}|\pair{a_{K_3, I_3,J_3}, h_{K_4}}|    S^{2} \pair{f, h_{K_1} \otimes h_{I_3} \otimes h_{K_4}}_{1,3,4} \pair{\pair{G, h_{K_1} \otimes h_{J_3}}_{1,3} \nu }_{K_4, 4} \\
&\lesssim  \int_{\R^d} \sum_{K_1,K_3}\sum_{\substack{I_3^{(k_3)} = K_3\\ J_3^{(l_3)}  = K_3}} \avgfu{K_1} \otimes \avgfu{J_3} \frac{|I_3|^\pinv{2} |J_3|^\pinv{2}}{|K_3|} \\
&\hspace{4em} \times \Bsulk{\sum_{K_4} S^{2} \pair{f, h_{K_1} \otimes h_{I_3} \otimes h_{K_4}}_{1,3,4}^2 \pair{\pair{G, h_{K_1} \otimes h_{J_3}}_{1,3} \nu }_{K_4, 4}^2 \avgfu{K_4}}^\pinv{2}.
\end{align*}

Next, we estimate 
\begin{align*}
|\pair{\pair{G, h_{K_1} \otimes h_{J_3}}_{1,3} \nu }_{K_4, 4}| 
&\leq M^{4} \bbrac{ \pair{M^3\Delta_{K_1 \times K_3, (0,l_3)}^{1,3} G}_{K_1,1} \nu}|J_3|^\pinv{2}|K_1|^\pinv{2}
\end{align*}
and 
\begin{align*}
\Bsulk{\sum_{K_4} S^{2} \pair{f, h_{K_1} \otimes h_{I_3} \otimes h_{K_4}}_{1,3,4}^2 \otimes \avgfu{K_4}}^\pinv{2} \lesssim \pair{S^{2,4} \Delta_{K_1 \times K_3, (0, k_3)}^{1,3} f}_{K_1 \times I_3, 1,3}|I_3|^\pinv{2}|K_1|^\pinv{2}.
\end{align*}

Hence, we have 
\begin{align*}
&  \int_{\R^d} \sum_{K_1,K_3} M^{4} \bbrac{M^3 \pair{\Delta_{K_1 \times K_3, (0,l_3)}^{1,3} G}_{K_1,1} \nu} \\
&\hspace{4em}\times\pair{S^{2,4} \Delta_{K_1 \times K_3, (0, k_3)}^{1,3} f}_{K_1 \times K_3, 1,3} \otimes 1_{K_1} \otimes 1_{K_3}  \\
&\lesssim \Bnorm{\Bsulk{\sum_{K_1,K_3} M^{4} \bbrac{M^3\pair{\Delta_{K_1 \times K_3, (0,l_3)}^{1,3} G}_{K_1,1} \nu}^2 1_{K_1} \otimes 1_{K_3}}^\pinv{2}}{L^{p'}(\mu^{1-p'})}\\
&\qquad \times\Bnorm{\Bsulk{\sum_{K_1,K_3} \pair{ S^{2,4} \Delta_{K_1 \times K_3, (0, k_3)}^{1,3} f}_{K_1 \times K_3, 1,3}^2 \otimes 1_{K_1} \otimes 1_{K_3}}^\pinv{2}}{L^{p}(\mu)} \\
&\lesssim_{[\mu]_{A_p},[\lambda]_{A_p}} \norm{g}{L^{p'}(\lambda^{1-p'})} \norm{f}{L^p(\mu)}.
\end{align*}

Next, fix the integer $t_3 \in [1,l_3]$ and consider the dual form of the  term $B_2$
\begin{align*}
&\Big|\sum_{\substack{K_1,K_2\\K_3,K_4}}\sum_{\substack{I_3^{(k_3)} = K_3\\ J_3^{(l_3)}  = K_3}} a_{K_1, K_2}\pair{a_{K_3, I_3,J_3}, h_{K_4}} \\
&\qquad\times \sum_{Q_2\subset K_2} |K_2|^{-1} |J_3^{(t_3)}|^{-\pinv{2}}\Bpair{b, \avgfu{K_1} \otimes h_{Q_2} \otimes h_{J_3^{(t_3)}} \otimes \avgfu{K_4}}  \\
&\qquad\times \pair{f, h_{K_1} \otimes h_{Q_2} \otimes h_{I_3} \otimes h_{K_4}} \Bpair{g, \avgfu{K_1} \otimes h_{K_2} \otimes h_{J_3} \otimes \avgfu{K_4}} \Big|. 
\end{align*}
Thus, by Proposition \ref{prop:uniformBMO} we are estimating the following term
\begin{align*}
&\int_{\R^{d}} \sum_{\substack{K_1,K_2\\K_3}}  |a_{K_1, K_2}| \avgfu{K_1} \otimes \avgfu{K_2}  \\
&\hspace{3em}\times\biggsulk{\sum_{\substack{Q_2 \\ P_3^{(l_3 - t_3)} = K_3}} \Bsulk{\sum_{\substack{I_3^{(k_3)} = K_3 \\ J_3^{(t_3)} = P_3}} |P_3|^{-\pinv{2}} \pi^4(f_{K_1, Q_2, I_3}, g_{K_1, K_2, J_3}) }^2 \avgfu{Q_2} \otimes \avgfu{P_3}  }^\pinv{2}\nu,
\end{align*}
where 
\begin{align*}
&\pi^4(f_{K_1, Q_2, I_3}, g_{K_1, K_2, J_3}) \\
 &:= \pi_{K_3,I_3,J_3}^4\Bsulk{\pair{f,h_{K_1} \otimes h_{Q_2} \otimes h_{I_3}}_{1,2,3}, \Bpair{g,\avgfu{K_1}\otimes h_{K_2} \otimes h_{J_3}}_{1,2,3}}
\end{align*}
 is a bilinear one-parameter paraproduct such that $a_{K_4}\pair{\varphi, h_{K_4}}\pair{\phi}_{K_4}$ is replaced by\\ $\abs{a_{K_4}\pair{\varphi, h_{K_4}}\pair{\phi}_{K_4}}$. 
As previously, by sparse domination and $A_\infty$-extrapolation we get
\begin{align*}
&\int_{\R^{d}} \sum_{\substack{K_1,K_2\\K_3}}  |a_{K_1, K_2}| \avgfu{K_1} \otimes \avgfu{K_2}  \\
&\hspace{3em}\times\Bsulk{\sum_{\substack{Q_2 \\ P_3^{(l_3 - t_3)} = K_3}} \Bsulk{\sum_{\substack{I_3^{(k_3)} = K_3 \\ J_3^{(t_3)} = P_3}} |P_3|^{-\pinv{2}} \pi^4(f_{K_1, Q_2, I_3}, g_{K_1, K_2, J_3}) }^2 \avgfu{Q_2} \otimes \avgfu{P_3}  }^\pinv{2}\nu \\
&\lesssim_{[\nu]_{A_\infty}} \int_{\R^{d}} \sum_{\substack{K_1,K_2\\K_3}}  |a_{K_1, K_2}| \frac{1_{K_1}}{|K_1|^\pinv{2}}\otimes \frac{1_{K_2}}{|K_2|^\pinv{2}} \otimes 1_{K_3}   \pair{M^{3, 4} \Delta_{K_2 \times K_3, (0, l_3)}^{2,3} g}_{K_1 \times K_2, 1,2} \\
&\hspace{4em}\times\Bsulk{\sum_{Q_2} \pair{M^4 \Delta_{K_1 \times Q_2 \times K_3,(0,0,k_3)}^{1,2,3} f}_{K_1 \times Q_2 \times K_3,1,2,3}^2 \otimes 1_{Q_2} }^\pinv{2}\nu \\
&\lesssim \int_{\R^{d}} \sum_{K_3} 1_{K_3} \Bsulk{\sum_{K_1, K_2} \pair{M^{3, 4} \Delta_{K_2 \times K_3, (0, l_3)}^{2,3} g}_{K_1 \times K_2 , 1,2}^2 \\
&\quad\times \Bpair{\Bsulk{\sum_{Q_2} \pair{M^4 \Delta_{K_1 \times Q_2 \times K_3,(0,0,k_3)}^{1,2,3} f}_{K_1 \times Q_2 \times K_3,1,2,3}^2 \otimes 1_{Q_2} }^\pinv{2}\nu}_{K_1 \times K_2, 1,2}^2 1_{K_1} \otimes 1_{K_2}}^\pinv{2} \\
&\leq \int_{\R^d}\Bsulk{\sum_{ K_2, K_3} \pair{M^{1,3,4} \Delta_{K_2 \times K_3, (0, l_3)}^{2,3} g}_{ K_2, 2}^2  \otimes 1_{K_2}  }^\pinv{2} \\
&\quad \biggsulk{\sum_{K_1,K_3}  M^2 \Bpair{\Bsulk{\sum_{Q_2} \pair{M^4 \Delta_{K_1 \times Q_2 \times K_3,(0,0,k_3)}^{1,2,3} f}_{K_1 \times Q_2 \times K_3, 1,2,3}^2 \otimes 1_{Q_2} }^\pinv{2} \nu}_{K_1, 1}^2 1_{K_1 \times K_3}}^\pinv{2} \\
&\lesssim_{[\mu]_{A_p},[\lambda]_{A_p}}  \norm{g}{L^{p'}(\lambda^{1-p'})} \norm{f}{L^p(\mu)}.
\end{align*}
\end{proof}

We return to consider the space $\R^d = \R^{d_1} \times \R^{d_2} \times \dots \times \R^{d_m}.$ 

\begin{thm}
Let $\nu = \mu^\pinv{p} \lambda^{-\pinv{p}},$ where $\mu, \lambda \in A_p(\R^{d_1} \times \cdots \times \R^{d_m})$ in $\R^d$) and $1<p<\infty.$
In addition, let $\bI  = \ksulku{\bI_i}_{i = 1}^k$ be a partition of $\colM.$ For given CZO $T^{\seq{v}_i},$ where $\seq{v}_i = (j)_{j \in \bI_i},$ suppose that at least one of the following conditions holds:
\begin{enumerate}
\item the CZO $T_i$ is paraproduct free, or
\item $\#\bI_i \le 2$
\end{enumerate}
for all $i = 1, \ldots, k.$
Then we have
$$
\norm{[T^{\seq{v}_1},[T^{\seq{v}_2}, \dots, [b,T^{\seq{v}_k}]]]}{L^p(\mu) \rightarrow L^{p}(\lambda)}
\lesssim \normbmo{b}{\bI}{\nu}.
$$
\end{thm}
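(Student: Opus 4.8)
The plan is to run the same two-layer machinery that handled the paraproduct free iterated case (Section~\ref{sec:shift}, culminating in Theorem~\ref{thm:iteratedShift}) and the two bi-parameter CZO case (Section~\ref{sec:journe}), and to organise it so that the two situations are treated uniformly inside one $k$-fold nested commutator. \textbf{Step 1: reduction to dyadic model operators.} Applying the representation theorem to each factor --- Ou \cite{Ou2017} when $T^{\seq{v}_i}$ is a paraproduct free $\#\bI_i$-parameter CZO, and Martikainen \cite{Martikainen2012} when $\#\bI_i \le 2$ --- we write each $T^{\seq{v}_i}$ as a rapidly convergent average, over randomised dyadic grids $\bD^{d_j}$ with $j \in \bI_i$ and over complexity parameters, of dyadic model operators $U_i$: a multi-parameter shift $\bS^{\seq{v}_i}$ in the paraproduct free case, and a bi-parameter shift, partial paraproduct $P^{\seq{v}_i}$, or full paraproduct $\Pi^{\seq{v}_i}$ when $\#\bI_i=2$. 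Since the geometric decay supplied by the representation theorem dominates the at most polynomial growth (in the complexities) of the bounds we will obtain for the model commutators, it suffices to bound $\norm{[U_1,[U_2,\dots,[b,U_k]]]}{L^p(\mu)\to L^p(\lambda)}$ uniformly over all admissible tuples $(U_1,\dots,U_k)$ by $\normbmo{b}{\bI}{\nu}$.

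\textbf{Step 2: expansion and the legal terms.} Expanding the nested commutator produces $2^k$ terms, each a composition of the $U_i$ with a single multiplication by $b$. In every such term we expand the product involving $b$ into one-parameter paraproducts $A^j_1,A^j_2,A^j_3$ in each parameter $j$ in which $b$ meets a cancellative Haar function, leaving unexpanded the parameters where $b$ is already paired with a cancellative Haar function coming from a full paraproduct, or with a $\BMO$-coefficient of a partial paraproduct (its passive parameter). As in Lemma~\ref{lem:paraBound} and its uses in Sections~\ref{sec:shift} and \ref{sec:journe}, whenever the resulting paraproduct is ``legal'' on every block $\bI_i$ --- i.e.\ at least one cancellative Haar function is paired with $b$ on each block --- the term is bounded directly: Lemma~\ref{lem:paraBound} (applied blockwise, with $\seq{u}$ the collection of legal parameters, one or more per block) together with the $L^p$-boundedness of the model operators gives a bound by $\normBMO{b}{\seq{u}}{\nu}$, and since $\seq{u}$ contains a subsequence hitting every block we have $\normBMO{b}{\seq{u}}{\nu}\le \normbmo{b}{\bI}{\nu}$ by Lemma~\ref{lem:lBMO}.

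\textbf{Step 3: the illegal terms.} The remaining terms are those in which, on some nonempty family of blocks, $b$ is paired only with averages on the whole block (an ``illegal'' paraproduct) or enters through the $\BMO$-coefficient of a partial paraproduct. For each such block we pair up the corresponding terms of the commutator expansion so that the model operators acting on that block cancel --- exactly the pairing used in the proof of Theorem~\ref{thm:iteratedShift} and of the two bi-parameter CZO theorem of Section~\ref{sec:journe} --- and then apply the telescoping identity Lemma~\ref{lem:diffofavgs}, one block at a time, to rewrite the difference of averages of $b$ as a sum of one-parameter martingale differences of averages. This reinstates cancellative Haar functions paired with $b$ on the previously illegal blocks. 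For a partial paraproduct the passive parameter is additionally dispatched via sparse domination of bilinear (one-parameter) paraproducts together with $A_\infty$-extrapolation, precisely as in the estimates of $\sigma_2$ and $B_2$ in Section~\ref{sec:journe}; this is the only place where we need the active part of a non-paraproduct-free block to be one-dimensional, which is guaranteed by the hypothesis $\#\bI_i\le 2$. In every case the term produced is of the shape of the general term \eqref{eq:generalterm}, with the five parameter groups $\seq{v}^1,\dots,\seq{v}^5$ assigned according to which blocks are legal (cancellative Haar on $b$), illegal-turned-cancellative (after Lemma~\ref{lem:diffofavgs}), or passive (averages), so that $\bigcup_{i=1,3,5}\seq{v}^i$ meets every block. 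Lemma~\ref{lem:generalEstimation} then bounds it by $\normBMO{b}{\seq{u}}{\nu}\le\normbmo{b}{\bI}{\nu}$. Summing the legal and illegal contributions over all model-operator types and then over the dyadic grids and complexities (Step 1) completes the proof.

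\textbf{Main obstacle.} The technical heart is Step~3: arranging, inside an arbitrary $k$-fold nested commutator mixing multi-parameter shifts with bi-parameter shifts and partial/full paraproducts, the pairing of illegal terms so that the unwanted model operators cancel and Lemma~\ref{lem:diffofavgs} can be applied block by block, while keeping careful track of which parameters in each block carry cancellative versus non-cancellative Haar functions and which require the sparse-domination detour. Once a given term has been brought into the form \eqref{eq:generalterm} the estimate is automatic; the difficulty is purely in organising this reduction uniformly across all the DMO types, and this bookkeeping is where the argument is most delicate.
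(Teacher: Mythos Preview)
Your proposal is correct and follows the same outline as the paper's own argument (which itself omits the details and simply says to combine the techniques of Theorem~\ref{thm:iteratedShift} with those of Section~\ref{sec:journe}). One imprecision worth flagging: your Step~3 ends by saying that ``in every case the term produced is of the shape of the general term \eqref{eq:generalterm}'' and that Lemma~\ref{lem:generalEstimation} finishes the estimate. That is accurate only for the blocks on which the model operator is a shift; when a block carries a full or partial paraproduct, the resulting terms are \emph{not} of the form \eqref{eq:generalterm} (the coefficients carry a $\BMO$ norm, not the shift-type size bound), and they are bounded directly by the ad hoc square-function and $H^1$--$\BMO$ duality estimates displayed in Section~\ref{sec:journe} (the analogues of the $\sigma_1$, $\sigma_2$, $B_1$, $B_2$ computations), not by Lemma~\ref{lem:generalEstimation}. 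You already invoke these estimates earlier in the same paragraph, so the strategy is intact --- but the final sentence of Step~3 should say that each term is handled either by Lemma~\ref{lem:generalEstimation} (shift blocks) or by the direct Section~\ref{sec:journe} estimates (paraproduct blocks), rather than that all of them fall under \eqref{eq:generalterm}.
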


Here even with the case $k=3$ with bi-parameter operators, we have a collection 27 commutators. Actually, even more, when counting different forms of paraproducts. We can use the same strategy as in the case $k=2$ also here and essentially nothing really changes. Clearly, the number of paraproduct coefficients increase but techniques used in the case $k=2$ also apply to these situations. The previous theorem is not stated for paraproduct free CZOs. However, if we combine techniques of Theorem \ref{thm:iteratedShift}, we can allow paraproduct free CZOs of arbitrary parameters. We omit the details.
Furthermore, we remark that the case $k=1$ is proven in \cite{Li2018Bloom} for the bi-parameter CZOs.

\bibliographystyle{habbrv}

\end{document}